\documentclass[11pt]{amsart}
\usepackage{amsfonts}
\usepackage{tikz}
\usepackage{xcolor}
\usepackage{ifthen}
\usepackage{amsmath}
\usepackage{amsthm}
\usepackage{amssymb}
\usepackage{tikz-cd} 
\usepackage{bbm}
\theoremstyle{plain}
\newtheorem{theorem}{\bf{Theorem}}[section]
\newtheorem{lemma}[theorem]{Lemma}
\newtheorem{proposition}[theorem]{Proposition}
\newtheorem{corollary}[theorem]{Corollary}
\newtheorem{remark}[theorem]{Remark}
\newtheorem{fact}[theorem]{Fact}
\newtheorem{definition}[theorem]{Definition}

\usepackage[top=.5 in,bottom=.5 in, left=.5 in, right= .5 in]{geometry}

\setlength{\topmargin}{-0.8in} \setlength{\textheight}{9.25in} \setlength{\oddsidemargin}{0.0in}
\setlength{\evensidemargin}{0.0in} \setlength{\textwidth}{6.5in}

\def\R{{\mathbb R}}
\def\C{{\mathbb C}}

\newcommand{\mrz}{M_{\bR}(z)}
\newcommand{\bX}{\mathbf{X}}
\newcommand{\bY}{\mathbf{Y}}
\newcommand{\bZ}{\mathbf{Z}}
\newcommand{\bV}{\mathbf{V}}
\newcommand{\bU}{\mathbf{U}}
\newcommand{\bI}{\mathbf{I}}

\newcommand{\bR}{\mathbf{R}}

\newcommand{\gx}{G_{\bX}(z)}

\newcommand{\muz}{M_{\bU}(z)}
\newcommand{\musz}{M^2_{\bU}(z)}
\newcommand{\muw}{M_{\bU}(w)}

\newcommand{\oj}{\omega_1(z)}
\newcommand{\ojw}{\omega_1(w)}
\newcommand{\od}{\omega_2(z)}
\newcommand{\odw}{\omega_2(w)}
\newcommand{\cgw}{\textrm{C}^\ast}
\newcommand{\calA}{\mathcal{A}}

\numberwithin{equation}{section}

\begin{document}
	\title{The Kummer distribution in free probability, and its characterizations}

	\author[M.\'{S}wieca]{Marcin \'{S}wieca}
	\address{Wydzia{\l} Matematyki i Nauk Informacyjnych\\
		Politechnika Warszawska\\
		ul. Ko\-szy\-ko\-wa 75\\
		00-662 Warsaw, Poland} \email{marcin.swieca@pw.edu.pl}
	\thanks{This research was funded in part by National Science Centre, Poland WEAVE-UNISONO grant BOOMER 2022/04/Y/ST1/00008.}
	\thanks{For the purpose of Open Access, the authors have applied a CC-BY public copyright licence to any Author Accepted Manuscript (AAM) version arising from this submission.}
	\keywords{}
	
	\date{\today}
	\begin{abstract}
		We study the analogue of Kummer distribution in  free probability. We prove characterization of free-Kummer and free Poisson distributions by freeness properties together with some assumptions about conditional moments. Our main tools are subordination of free multiplicative convolution and Boolean cumulants.
	\end{abstract}
	
	\maketitle
	\section{Introduction}
	Recall that the Gamma law $G(a,c)$  with parameters $a,c>0$ is a probability measure that has the density
	$$\frac{c^a}{\Gamma(a)}x^{a-1}e^{-cx}\mathbbm{1}_{(0,\infty)}(x)$$
	and the Kummer law $K(a,b,c)$ with parameters $a,c>0$, $b\in\mathbb{R}$  is a probability  measure that has the density $$C\frac{x^{a-1}}{(1+x)^{a+b}}e^{-cx}\mathbbm{1}_{(0,\infty)}(x),$$
	where $C=C(a,b,c)$ is a normalizing constant.
	
	It was observed in \cite{HamzaValois} by Hamza and Vallois  that if $X, Y$ are independent, $X\sim K(a,b,c)$ and $\ Y\sim G(a+b,c)$ then
	\begin{equation*}
		U=\frac{Y}{1+X}\ \ \ \mbox{and}\ \ \ V=X(1+U)
	\end{equation*}
	are independent and distributed as $U\sim K(a+b,-b,c)$ and $V\sim G(a,c)$. We will refer to this as HV property. In \cite{PiliWeso1} the converse was proved under some technical assumptions for densities of $X$ and $Y$, namely assume that $X$ and $Y$ are independent, positive and such that logarithms of their densities are locally integrable on $\R^+$.  If $U$ and $V$ are independent  then  $X\sim K(a,b-a,c),\ Y\sim G(b,c)$ for some parameters $a,b,c>0$. Later the same authors dropped  the assumption of  existence of densities and replaced  independence of $U$ and $V$ by constancy of regressions. See  \cite{PiliWeso2} for more details.
	
	The  HV property has its analogue for random matrices. It involves matrix version of Kummer distribution and Wishart distribution. Before we state the property  we recall some basic facts. Let $\Omega$ denote the open cone of real, symmetric and positive definite matrices. Then  a matrix Kummer distribution  $K_n(a,b,\Sigma)$ with parameters $a>\frac{n-1}{2}, b\in\R,\Sigma\in\Omega$ is a probability measure supported on $\overline{\Omega}$ that has the  density
	$$c (\det x)^{a-\frac{n+1}{2}}(\det(I+x))^{-a-b}e^{-\left<\Sigma,x\right>}\mathbbm{1}_{\Omega}(x),$$
	where $I$ is the $n\times n$ identity matrix, $\left<x,y\right>=\textrm{tr}(xy)$ and $c=c(a,b,\Sigma)$ is  a normalizing constant. Similarly a Wishart distribution $W_n(b,\Sigma)$ with parameters  $b\in\left\{0,\frac{1}{2},1,\frac{3}{2},\ldots,\frac{n-1}{2}\right\}\cup\left(\frac{n-1}{2},\infty\right)$ and $\Sigma\in\Omega$ is  a probability measure supported on $\overline{\Omega}$ with Laplace transform given by the following formula
	$$\mathbb{E}e^{\left<s,Y\right>}=\left(\frac{\det \Sigma}{\det(\Sigma-s)}\right)^b,\ \mbox{for all}\ s\in\Omega\ \mbox{such that }\ \Sigma-s\in\Omega.$$
	When $b>\frac{n-1}{2}$ the Wishart distribution  $W_n(b,\Sigma)$ has the density
	$$\frac{(\det \Sigma)^b}{\Gamma_{\Omega}(b)} (\det x)^{b-\frac{n+1}{2}}e^{-\left<\Sigma,x\right>}\mathbbm{1}_{\Omega}(x),$$
	where $\Gamma_{\Omega}(b)=\pi^{n(n-1)/4}\prod_{k=1}^n\Gamma\left(b-\frac{k-1}{2}\right)$ is the multivariate Gamma function. 
	
	The HV property for random matrices was established in \cite{PiliKolo} and it states  that if $X\sim K_n(a,b,c I)$ and  $Y\sim W_n(a+b,c I)$ with  $a>\frac{n-1}{2}$, $a+b>\frac{n-1}{2}$, $c>0$ are independent then $U$ and $V$ defined as
	$$U=(I+X)^{-\frac{1}{2}} Y (I+X)^{-\frac{1}{2}}, \ \ V=(I+U)^{\frac{1}{2}}X(I+U)^{\frac{1}{2}},$$
	are also independent and distributed as $K_n(a+b,-b,c I)$ and $W_n(a,c I)$ respectively. In the same paper the converse (characterization) was established under the assumption that $\bX,\bY$ have positive densities.
	
	The HV property in free probability was proved using the concept of asymptotic freeness that, roughly speaking, states that large independent random matrices behave like free random variables. To be more precise, for a $n\times n$ matrix $A$ let  $\mu_A$ be  an empirical spectral distribution of $A$ i.e.
	$$\mu_A=\frac{1}{n}\sum_{k=1}^n\delta_{\lambda_k},$$
	where $\lambda_1,\lambda_2,\ldots,\lambda_n$ are all  eigenvalues of $A$. Then the following well known theorem proved by Voiculescu \cite{VoicAsymptoticFreeness} holds.
	\begin{theorem}{}
		Suppose for all $n\geq1 $ the random matrices  $X_n$ and $Y_n$ are independent and the distribution of  $Y_n$ is unitary invariant. Assume also that the sequences $(X_n)_n$ and $(Y_n)$ have almost surely an asymptotic spectral distributions, say $\mu_{X_n}\stackrel{1}{\to}\mu$ and  $\mu_{Y_n}\stackrel{1}{\to}\nu$ weakly. Then for all polynomials $P(x,y)$ of two non-commutative variables almost surely we have $$\lim\limits_{n\to\infty}\frac{1}{n}\mbox{tr}\left(P(X_n,Y_n)\right)=\varphi(P(\bX,\bY)),$$
		where $\bX,\bY$ are two free random variables in a non-commutative probability space $(\mathcal{A},\varphi)$ such that $\bX\sim\mu$ and $\bY\sim\nu$.
	\end{theorem}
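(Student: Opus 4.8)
\medskip
\noindent\textbf{Proof plan.} The strategy is to reduce the statement to the assertion that a deterministic matrix and its conjugate by a Haar-distributed unitary are asymptotically free, and then to prove that by the Weingarten calculus together with concentration of measure on the unitary group. Using the unitary invariance of $Y_n$, write $Y_n=U_n\Lambda_nU_n^*$, where $\Lambda_n$ is the diagonal matrix of eigenvalues of $Y_n$ and $U_n$ is Haar-distributed on the unitary group $U(n)$ and independent of the pair $(X_n,\Lambda_n)$. Conditioning on $(X_n,\Lambda_n)$ and on the probability-one event that $\mu_{X_n}\to\mu$ and $\mu_{\Lambda_n}=\mu_{Y_n}\to\nu$, Fubini's theorem reduces the claim to the following deterministic statement: \emph{if $A_n,\Lambda_n$ are self-adjoint $n\times n$ matrices with $\sup_n(\|A_n\|+\|\Lambda_n\|)<\infty$ and $\mu_{A_n}\to\mu$, $\mu_{\Lambda_n}\to\nu$ weakly, and $U_n$ is Haar on $U(n)$, then almost surely $\tfrac1n\mathrm{tr}\,P(A_n,U_n\Lambda_nU_n^*)\to\varphi(P(\bX,\bY))$ for every noncommutative polynomial $P$.} (The uniform norm bound is what turns weak convergence of spectral measures into convergence of all moments; the general case follows by truncating the matrices, which perturbs every normalised trace by an amount that is uniformly small in $n$.)

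Write $\tau_n=\tfrac1n\mathrm{tr}$. Since $q(U_n\Lambda_nU_n^*)=U_nq(\Lambda_n)U_n^*$ for any polynomial $q$, and since freeness determines all mixed moments from the marginal ones via the vanishing of alternating centred products, the convergence of $\tau_n(P(A_n,U_n\Lambda_nU_n^*))$ for all $P$ is — given the elementary marginal limits $\tau_n(p(A_n))\to\varphi(p(\bX))$ and $\tau_n(q(\Lambda_n))\to\varphi(q(\bY))$ — equivalent to
\[
\tau_n\!\left(A^{(n)}_1U_nB^{(n)}_1U_n^*\,A^{(n)}_2U_nB^{(n)}_2U_n^*\cdots A^{(n)}_mU_nB^{(n)}_mU_n^*\right)\longrightarrow 0\qquad(m\ge 1)
\]
for all deterministic, uniformly bounded, \emph{centred} matrices $A^{(n)}_i=p_i(A_n)-\tau_n(p_i(A_n))I$ and $B^{(n)}_i=q_i(\Lambda_n)-\tau_n(q_i(\Lambda_n))I$, i.e.\ with $\tau_n(A^{(n)}_i)=\tau_n(B^{(n)}_i)=0$.

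I would first prove the displayed limit in expectation. Expanding the trace in matrix units, the monomial contains $m$ copies of $U_n$ and $m$ copies of $\overline{U_n}$, so the unitary Weingarten formula gives
\[
\mathbb{E}\,\tau_n\!\left(\textstyle\prod_{i=1}^{m}A^{(n)}_iU_nB^{(n)}_iU_n^*\right)=\frac1n\sum_{\sigma,\pi\in S_m}\mathrm{Wg}(n,\sigma\pi^{-1})\prod_{c\in\gamma\sigma^{-1}}\mathrm{tr}\!\left(\prod_{i\in c}A^{(n)}_i\right)\prod_{c'\in\pi}\mathrm{tr}\!\left(\prod_{i\in c'}B^{(n)}_i\right),
\]
where $\gamma=(1\,2\cdots m)$, the inner products are taken in cyclic order along the cycles, and $\mathrm{Wg}(n,\rho)=O(n^{\#(\rho)-2m})$ with $\#(\rho)$ the number of cycles of $\rho$. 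Since $\mathrm{tr}=n\tau_n$, a pair $(\sigma,\pi)$ contributes at order $n^0$ exactly when equality holds in the triangle inequality $|\gamma|\le|\gamma\sigma^{-1}|+|\sigma\pi^{-1}|+|\pi|$ (the noncrossing, or geodesic, case); all remaining pairs are $O(n^{-2})$ and there are boundedly many of them. In the geodesic case the term is a product of normalised traces of consecutive sub-words of the $A^{(n)}_i$ (one per cycle of $\gamma\sigma^{-1}$) and of the $B^{(n)}_i$ (one per cycle of $\pi$), and — this is the decisive point — the geodesic condition forces $\#(\gamma\sigma^{-1})+\#(\pi)\ge m+1$, so the $2m$ centred letters are distributed among at least $m+1$ nonempty blocks; by the pigeonhole principle some block is a singleton, so the term carries a factor $\tau_n(A^{(n)}_i)=0$ or $\tau_n(B^{(n)}_j)=0$. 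Hence every surviving term vanishes and $\mathbb{E}\,\tau_n(\cdots)\to0$. (Running the same expansion with arbitrary, not necessarily centred, matrices shows that the surviving terms sum to exactly the free joint moment, which gives $\tau_n(P(A_n,U_n\Lambda_nU_n^*))\to\varphi(P(\bX,\bY))$ directly.)

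To pass from convergence in expectation to almost sure convergence I would use concentration on $U(n)$: the function $U\mapsto\tau_n(P(A_n,U\Lambda_nU^*))$ is Lipschitz in the Hilbert--Schmidt metric with constant $O(n^{-1/2})$, because $|\mathrm{tr}\,M|\le\sqrt n\,\|M\|_{\mathrm{HS}}$ and both $U\mapsto U\Lambda_nU^*$ and polynomial functional calculus are Lipschitz on norm-bounded sets uniformly in $n$; the sub-Gaussian concentration inequality on the unitary group therefore gives $\mathbb{P}(|\tau_n(P)-\mathbb{E}\tau_n(P)|\ge t)\le 2e^{-c(P)n^2t^2}$, which is summable in $n$. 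By the Borel--Cantelli lemma the limit above holds almost surely for each fixed $P$, hence — after restricting to the countably many polynomials with rational coefficients and using continuity in the coefficients — simultaneously for all $P$, and undoing the conditioning of the first step completes the proof. The main obstacle is the combinatorics of the Weingarten expansion: presenting it cleanly, isolating the geodesic (leading-order) pairs $(\sigma,\pi)$, and proving the block-count inequality $\#(\gamma\sigma^{-1})+\#(\pi)\ge m+1$ that forces a singleton block on centred words (equivalently, that the leading order reproduces precisely the free moment). The concentration step is comparatively routine once the correctly normalised Lipschitz constant is identified, and it is there, together with the moment convergence, that the uniform norm bound is genuinely used.
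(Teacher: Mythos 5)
This theorem is background material in the paper: it is quoted from Voiculescu's work and not proved there, so there is no internal proof to compare your argument against. Your sketch follows the now-standard route (essentially the Weingarten-calculus proof in the spirit of Collins and \'Sniady, plus concentration to get the almost sure statement, rather than Voiculescu's original argument, which gives convergence in expectation). The combinatorial heart is correct: writing $\#(\rho)=m-|\rho|$, the geodesic condition $|\gamma\sigma^{-1}|+|\sigma\pi^{-1}|+|\pi|=|\gamma|=m-1$ gives $\#(\gamma\sigma^{-1})+\#(\pi)=m+1+|\sigma\pi^{-1}|\ge m+1$, so some cycle is a singleton and contributes a factor $\tau_n(A_i^{(n)})=0$ or $\tau_n(B_j^{(n)})=0$; the parity argument does make the non-geodesic terms $O(n^{-2})$; and the Lipschitz constant $O(n^{-1/2})$ in the Hilbert--Schmidt metric yields the summable tail $e^{-c n^2 t^2}$ (one should run the concentration on $SU(n)$, or use that $U\mapsto U\Lambda_n U^*$ is invariant under the centre, since Haar measure on $U(n)$ does not concentrate in the $U(1)$ direction --- a minor, standard point).

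The one genuine gap is the reduction to uniformly norm-bounded matrices. Almost sure weak convergence of $\mu_{X_n}$ and $\mu_{Y_n}$ does not imply convergence of moments, and your parenthetical claim that truncation ``perturbs every normalised trace by an amount that is uniformly small in $n$'' fails in this generality: if $X_n$ is diagonal with one eigenvalue equal to $n$ and the rest zero, then $\mu_{X_n}\to\delta_0$ weakly while $\tfrac1n\mathrm{tr}(X_n^2)=n\to\infty$, so the conclusion of the theorem itself breaks down and no truncation can recover it. The statement implicitly requires convergence in the sense of moments (Voiculescu's convergence in non-commutative distribution) or a uniform bound on the operator norms --- which is what holds in the paper's application, where $\mu$ and $\nu$ are compactly supported. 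With that hypothesis made explicit, your reduction via unitary invariance, the Weingarten expansion, and the concentration/Borel--Cantelli step go through; without it, the truncation remark cannot be repaired and should be replaced by the strengthened hypothesis.
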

	
	The existence of a limiting spectral distribution for Kummer matrices was established in \cite{PiliszekFreeKummer}. It was  proved that for  $X_n\sim K_n(a_n,b_n,c_n)$ where parameters $a_n>\frac{n-1}{2}$, $b_n\in\R$, $c_n>0$  are such that $\frac{2a_n}{n}\to \alpha>1$, $\frac{2b_n}{n}\to \beta$, $\frac{2 a_n}{n}\to \gamma>0$ then the sequence of empirical spectral distributions $\mu_{X_n}$ converges almost surely to what we call in this paper  the free-Kummer distribution $\mathcal{K}(\alpha,\alpha +\beta,\gamma)$. Recall also the  well known fact that  for Wishart matrices $Y_n\sim W_{n}(\lambda_n,\alpha_n I)$ with  $\frac{2\lambda_n}{n}\to\lambda$ and $\frac{2\alpha_n}{n}\to \alpha$ the limiting empirical spectral distribution is the free Poisson (Marchenko-Pastur) distribution  $\nu(\lambda,1/\alpha)$.
	
	In the same paper the HV property in free probability was proved.
	
	\begin{theorem}[\cite{PiliszekFreeKummer}]\label{FreeHVproperty}
		Let $\bX$ and $\bY$ be self-adjoint random variables in some $C^*$ probability space, such that $\bX$ has the free-Kummer distribution $\mathcal{K}(\alpha,\alpha +\beta,\gamma)$ and the distribution of $\bY$ is free-Poisson $\nu(\alpha+\beta,1/\gamma,)$ for some $\alpha>1$, $\gamma>0$ and $\beta>1-\alpha$. If $\bX, \bY$ are free, then $\bU$ and $\bV$ defined as
		\begin{equation}
			\bU=(\bI+\bX)^{-\frac{1}{2}}\bY(\bI+\bX)^{-\frac{1}{2}}\ \mbox{and}\ \bV=(\bI+\bU)^{\frac{1}{2}}\bX(\bI+\bU)^{\frac{1}{2}}\label{defofUV}
		\end{equation}
		are free. Moreover $\bU\sim \mathcal{K}(\alpha+\beta,\alpha,\gamma)$ and $\bV\sim \nu(\alpha,1/\gamma)$.
	\end{theorem}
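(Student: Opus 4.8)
The strategy is to work entirely at the level of the relevant analytic transforms ($S$-transforms, $\eta$-transforms, and Cauchy/moment transforms) and to exploit the subordination functions for free multiplicative convolution. The starting point is the well-known $S$-transform descriptions: the free-Kummer $\mathcal{K}(\alpha,\alpha+\beta,\gamma)$ and the free-Poisson $\nu(\lambda,1/\gamma)$ each have an explicit $S$-transform (the free-Kummer $S$-transform was computed in \cite{PiliszekFreeKummer} and the free-Poisson one is classical). First I would verify that $\bU = (\bI+\bX)^{-1/2}\bY(\bI+\bX)^{-1/2}$ has the claimed distribution. Since $\bX$ and $\bY$ are free, $\bU$ has the same distribution as $(\bI+\bX)^{-1}\bY$ in the sense of moments (by the trace property, $(\bI+\bX)^{-1/2}\bY(\bI+\bX)^{-1/2}$ and $(\bI+\bX)^{-1}\bY$ have equal moments), so $\mu_{\bU} = \mu_{(\bI+\bX)^{-1}} \boxtimes \mu_{\bY}$, a free multiplicative convolution. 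Computing $S_{(\bI+\bX)^{-1}}$ from $S_{\bX}$ via the standard rule $S_{W^{-1}}(z) = 1/S_W(-1-z)$ and multiplying by $S_{\bY}$ should, after simplification, yield exactly $S_{\mathcal{K}(\alpha+\beta,\alpha,\gamma)}$. This is a finite computation with the explicit rational/transcendental expressions for these $S$-transforms.

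The more delicate part is the freeness of $\bU$ and $\bV$, together with the distribution of $\bV$. The natural tool, as flagged in the abstract, is the subordination of free multiplicative convolution: when $\bX$ and $\bY$ are free, there are subordination functions $\omega_1, \omega_2$ (analytic self-maps of the appropriate half-plane or cut plane) such that the relevant transforms of the products factor through them. I would introduce the subordination functions associated with $(\bI+\bX)\boxtimes\cdots$ and use them to give an explicit "change of variables" relating the transform of $\bV = (\bI+\bU)^{1/2}\bX(\bI+\bU)^{1/2}$ to that of $\bX$. Concretely, $\bI + \bU = \bI + (\bI+\bX)^{-1/2}\bY(\bI+\bX)^{-1/2} = (\bI+\bX)^{-1/2}(\bI+\bX+\bY)(\bI+\bX)^{-1/2}$ (up to the moment-equivalence), so $\bV$ is, in moments, $(\bI+\bX+\bY)(\bI+\bX)^{-1}\bX = (\bI+\bX+\bY)(\bI+\bX)^{-1}\bX$; one then needs $\mu_{\bV} = \mu_{(\bI+\bX+\bY)(\bI+\bX)^{-1}} \boxtimes \mu_{\bX}$ and to identify the first factor's $S$-transform. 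Here the subordination functions enter because $\bI+\bX+\bY$ is itself obtained from a free convolution and its transform must be expressed through the same $\omega_i$.

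For the freeness of $\bU$ and $\bV$, I expect the cleanest route is via Boolean (or free) cumulants: freeness of two self-adjoint elements $\bU,\bV$ is equivalent to the vanishing of all mixed free cumulants, and it often suffices to check vanishing of mixed Boolean cumulants of resolvents $(\bI+\bU)^{-1}$-type and $\bX$-type elements, which are precisely the quantities controlled by the subordination functions. I would set up generating-function identities for the joint Boolean cumulants of $\{(z_1\bI-\bU)^{-1}\}$ and $\{(z_2\bI-\bV)^{-1}\}$ and show, using the subordination relations derived above, that the mixed ones vanish; this simultaneously pins down the marginal of $\bV$ as $\nu(\alpha,1/\gamma)$ by matching its Boolean cumulant generating function. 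The main obstacle will be this last step: algebraically untangling the composition $\bV = (\bI+\bU)^{1/2}\bX(\bI+\bU)^{1/2}$ — where $\bU$ itself depends on $\bX$ — so that the dependence on $\bX$ cancels in exactly the way needed to exhibit freeness with $\bU$. Managing the two layers of free multiplicative convolution and keeping the subordination functions consistent across them is where the real work lies; the rest is transform bookkeeping.
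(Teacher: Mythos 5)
There is a genuine gap: the heart of the theorem — that $\bU$ and $\bV$ are free and that $\bV\sim\nu(\alpha,1/\gamma)$ — is never actually proved in your plan. For the marginal of $\bU$ the route is sound (by traciality $\mu_{\bU}=\mu_{(\bI+\bX)^{-1}}\boxtimes\mu_{\bY}$, and the $S$-transforms can in principle be matched, although note there is no simple rule producing $S_{\bI+\bX}$ from $S_{\bX}$, so one must go through the explicit Cauchy transform \eqref{CauchyStieltjesKummer} rather than ``transform bookkeeping''). But for $\bV$ you write that, in moments, $\bV$ equals $(\bI+\bX+\bY)(\bI+\bX)^{-1}\bX$ and that one ``needs $\mu_{\bV}=\mu_{(\bI+\bX+\bY)(\bI+\bX)^{-1}}\boxtimes\mu_{\bX}$''; this step is unjustified and in general false, because the two factors both contain $\bX$ and are not free, so the distribution of their product is not a free multiplicative convolution of the marginals. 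The subsequent plan for freeness of $(\bU,\bV)$ — vanishing of mixed Boolean/free cumulants of resolvents via subordination — is stated as an intention only, and you yourself flag that untangling the two layers of dependence ``is where the real work lies.'' That unperformed cancellation is precisely the content of the theorem, so the proposal as it stands proves neither the freeness nor the law of $\bV$.

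For comparison: this statement is not proved in the present paper at all; it is quoted from \cite{PiliszekFreeKummer}, where the argument is of a completely different nature. There one starts from the random-matrix HV property (independence of $U_n=(I+X_n)^{-1/2}Y_n(I+X_n)^{-1/2}$ and $V_n$ for independent matrix-Kummer $X_n$ and Wishart $Y_n$, with explicitly known matrix laws), establishes almost sure convergence of the empirical spectral distributions of Kummer matrices to $\mathcal{K}(\alpha,\alpha+\beta,\gamma)$, and then invokes Voiculescu's asymptotic freeness (the unitary/orthogonal invariance of the Wishart factor) to pass independence to freeness in the limit and to identify the limiting laws of $\bU$ and $\bV$. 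A purely intrinsic free-probabilistic proof along the lines you sketch would be genuinely new and harder than the converse characterization proved in this paper; indeed Remark \ref{Uwagaoalfie} only \emph{suggests}, without proof, that the HV property should extend beyond $\alpha,\alpha+\beta>1$, which signals that no direct transform-level proof is currently available.
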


 In this paper we show that  the definition of free-Kummer distribution  $\mathcal{K}(\alpha,\beta,\gamma)$ can be extended in a consistent way also for  $\alpha\in(0,1]$, $\beta\in \R$ and $\gamma>0$ and
  prove the following characterization theorem which can be thought of as the  converse statement to Theorem \ref{FreeHVproperty}. Note that freeness of  of $\bU$ and $\bV$ is replaced by weaker condition of constancy of (two) conditional moments of $\bV$ given $\bU$. 
 	\begin{theorem}\label{Thgeneral}
 	Let $(k,l)\in\left\{(1,-1),(1,2),(-1,-2)\right\}$ be fixed and let $\bX,\bY$ be free, positive, non-degenerated and self-adjoint random variables  in some $W^*$ probability space $(\mathcal{A},\varphi)$.  If $\bU,\bV$ are defined by \eqref{defofUV} and 
 	\begin{equation*}	\varphi\left(\bV^k\mid \bU\right)=m_k\bI,\ \ \mbox{and}\ \   \varphi\left(\bV^l\mid \bU\right)=m_l\bI, \end{equation*}
 	for some constants $m_k,m_l\in\R$ then $\bX$ has free-Kummer distribution   $\mathcal{K}\left(\alpha,\beta,\gamma\right)$ and $\bY$ has  free Poisson distribution $\nu\left(\beta,1/\gamma\right)$  for some $\alpha,\beta,\gamma>0$.
 \end{theorem}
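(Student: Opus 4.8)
The plan is to turn each conditional-moment hypothesis into a functional equation for the Cauchy transforms of $\bX$ and $\bY$, and then to solve the two equations coming from the exponents $k$ and $l$ simultaneously, recognising the solution as the free-Kummer/free-Poisson pair. Write $\bR=(\bI+\bX)^{-1}$, a function of $\bX$, hence free from $\bY$. One has the operator identities $\bU=\bR^{1/2}\bY\bR^{1/2}$ and $\bI+\bU=\bR^{1/2}(\bI+\bX+\bY)\bR^{1/2}$, so $\bU$ has law $\mu_{\bR}\boxtimes\mu_{\bY}$, while $\bV=(\bI+\bU)^{1/2}\bX(\bI+\bU)^{1/2}$; note also $\bX=(\bI+\bU)^{-1/2}\bV(\bI+\bU)^{-1/2}$ and $\bY=(\bI+\bX)^{1/2}\bU(\bI+\bX)^{1/2}$, so $(\bX,\bY)\mapsto(\bU,\bV)$ is an involution. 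Since $\varphi$ is a trace and $(\bI+\bU)^{s}$ commutes with everything in $W^{*}(\bU)$, the hypothesis $\varphi(\bV^{k}\mid\bU)=m_{k}\bI$ is equivalent to
\[
\varphi\big((\bX(\bI+\bU))^{k}\,g(\bU)\big)=m_{k}\,\varphi\big(g(\bU)\big)\qquad\text{for every bounded }g
\]
(negative powers meaning inverses), equivalently to $\varphi(\bX\mid\bU)=m_{1}(\bI+\bU)^{-1}$, $\varphi(\bX^{-1}\mid\bU)=m_{-1}(\bI+\bU)$, $\varphi(\bX(\bI+\bU)\bX\mid\bU)=m_{2}(\bI+\bU)^{-1}$ and $\varphi(\bX^{-1}(\bI+\bU)^{-1}\bX^{-1}\mid\bU)=m_{-2}(\bI+\bU)$ in the four relevant cases. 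Testing against the resolvents $g(\bU)=(\bI-w\bU)^{-1}$, which span enough of $W^{*}(\bU)$, turns them into identities between analytic functions of a variable $w$ near $0$.

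To evaluate the left-hand sides one conjugates by $\bR^{1/2}$: from $\bI-w\bU=\bR^{1/2}((\bI+\bX)-w\bY)\bR^{1/2}$ one gets $(\bI-w\bU)^{-1}=\bR^{-1/2}((\bI+\bX)-w\bY)^{-1}\bR^{-1/2}$, so by traciality each quantity above becomes a trace of a word in the free pair $(\bX,\bY)$, for instance
\[
\varphi\big(\bX(\bI-w\bU)^{-1}\big)=\varphi\big(\bX(\bI+\bX)((\bI+\bX)-w\bY)^{-1}\big),\qquad M_{\bU}(w)=\varphi\big((\bI+\bX)((\bI+\bX)-w\bY)^{-1}\big).
\]
Traces of the form $\varphi\big(p(\bX)((\bI+\bX)-w\bY)^{-1}\big)$ are governed by the subordination of the free additive convolution $\mu_{\bI+\bX}\boxplus\mu_{-w\bY}$ (equivalently, of the free multiplicative convolution producing the law of $\bU$): $\varphi\big(((\bI+\bX)-w\bY)^{-1}\mid\bX\big)$ is a resolvent $(\bX+\omega_{1}(w))^{-1}$ of $\bX$ with subordinated argument, so such a trace collapses to the integral of a rational function of $x$ against $\mu_{\bX}$, hence to $G_{\bX}(-\omega_{1}(w))$ up to polynomial corrections. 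Inserting this, using the companion relations satisfied by the subordination functions $\omega_{1},\omega_{2}$ (the same ones appearing in the proof of Theorem~\ref{FreeHVproperty}) and keeping the bookkeeping linear by means of the Boolean cumulant transform $\eta_{\bU}=1-1/M_{\bU}$, each identity of the previous paragraph becomes an algebraic relation among $G_{\bX}$, $G_{\bY}$, $M_{\bU}$ and $\omega_{1},\omega_{2}$.

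It remains, for each admissible pair $(k,l)$, to combine the two such relations. Together with the subordination equations this over-determines the system, and eliminating $\omega_{1},\omega_{2},G_{\bY},M_{\bU}$ leaves a single first-order algebraic/differential functional equation for $G_{\bX}$ alone; for the pair $(1,-1)$ the elimination is especially clean because the two relations are exchanged by $\bX\leftrightarrow\bX^{-1}$, and for $(1,2)$ and $(-1,-2)$ a similar, if less symmetric, reduction works. Solving the resulting equation, the only solutions compatible with $\bX$ being the law of a bounded, positive, non-degenerate self-adjoint element are the free-Kummer laws $\mathcal{K}(\alpha,\beta,\gamma)$ — including the newly admitted range $\alpha\in(0,1]$ — the three parameters being fixed by $m_{k},m_{l}$ and the normalisation $\varphi(\bI)=1$, and non-degeneracy ruling out the constant/atomic solutions and forcing $\alpha,\beta,\gamma>0$; feeding $\mu_{\bX}$ back into the relations of the second paragraph then forces $\bY\sim\nu(\beta,1/\gamma)$. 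I expect the main obstacle to be the second step — carrying the subordination computation through cleanly enough that the final system can actually be solved in closed form — together with the extra care needed in the inverse-exponent cases $(1,-1)$ and $(-1,-2)$, where the hypothesis itself encodes the invertibility of $\bV$, equivalently of $\bX$, which must be tracked through all the resolvent manipulations and conditional expectations above.
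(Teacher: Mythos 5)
Your reduction of the hypotheses to $\varphi(\bX\mid\bU)=m_1(\bI+\bU)^{-1}$, $\varphi(\bX^{-1}\mid\bU)=m_{-1}(\bI+\bU)$, $\varphi(\bX(\bI+\bU)\bX\mid\bU)=m_2(\bI+\bU)^{-1}$, $\varphi(\bX^{-1}(\bI+\bU)^{-1}\bX^{-1}\mid\bU)=m_{-2}(\bI+\bU)$ and the idea of testing against resolvents $(\bI-w\bU)^{-1}$ match the paper exactly, and for the first-order conditions ($k=\pm1$) your subordination argument does yield closed functional equations, as in Lemmas \ref{lemmaeq1} and \ref{lemmaeq3}. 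The genuine gap is in the second-order conditions. Testing $\varphi(\bX(\bI+\bU)\bX\mid\bU)$ or $\varphi(\bX^{-1}(\bI+\bU)^{-1}\bX^{-1}\mid\bU)$ against a resolvent produces quantities of the form $\varphi\left(g_1(\bR)(\bI-z\bU)^{-1}g_2(\bR)(\bI-w\bU)^{-1}\right)$ with $\bR=(\bI+\bX)^{-1}$: two functions of $\bX$ interleaved with two functions of $\bU$. These are \emph{not} of the form $\varphi\left(p(\bX)\,((\bI+\bX)-w\bY)^{-1}\right)$, so the one-variable subordination identity $\varphi\left((\bI-z\bU)^{-1}\mid\bR\right)=(\bI-\omega_2(z)\bR)^{-1}$ (your ``collapses to $G_{\bX}$ at a subordinated argument'') does not determine them; freeness fixes them in principle moment by moment, but that is far from the closed algebraic relation among $M_{\bU}$, $\omega_1$, $\omega_2$ that your elimination step requires. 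This is precisely why the paper devotes its main technical Section 4 (Theorem \ref{ThMainExpectation}, proved via the Boolean-cumulant formulas \eqref{boolmain1}--\eqref{boolmain3} and the two-variable functionals $\eta^h_\bR(z,w)$) to evaluating such mixed two-resolvent expressions, and it states explicitly that for $k=\pm2$ ``relying solely on subordination does not work.'' Your proposal offers no substitute for this step, so the cases $(1,2)$ and $(-1,-2)$ do not go through as written.

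A secondary gap: after elimination you arrive (as the paper does) at a quadratic equation for $G_{\bX}$ of the form \eqref{EquationforG} containing an undetermined constant, and you simply assert that its only admissible solutions are the free-Kummer laws. That uniqueness is not automatic; the paper proves it separately (Proposition \ref{kummerlemma}, using positivity of the measure to locate a real zero of $z(z+1)(G+H)-p$ and then analytic continuation), and it is one of the stated contributions. Also note that in the paper the law of $\bY$ is pinned down first, via the relation $\gamma\,\omega_2(z)=zM_{\bU}(z)+\alpha z$ and the $S$-transform (Lemma \ref{DistofY}), and this is then fed into the equation for $M_{\bU}$ to get the Kummer law of $\bX$ (Lemma \ref{DistofX}); your reverse order ($\mu_{\bX}$ first, then $\bY$) would need its own argument but is not the essential problem — the missing two-resolvent computation is.
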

	It is worth to note that in Theorem \ref{FreeHVproperty} both $\alpha$ and $\alpha+\beta$ are assumed to be greater than $1$ and consequently both   $\bX$ and $\bY$ are invertible. Our Theorem \ref{Thgeneral} suggest that  this theorem holds more generally for $\alpha,\alpha+\beta>0$. See Remark \ref{Uwagaoalfie} for more details.
	
	Our main technical result needed for the proof of Theorem \ref{Thgeneral} is calculation  of the following expression 
	\begin{equation}
	\varphi\left(g_1(\bR)\left(\bI-z\bR^{\frac{1}{2}}\bY\bR^{\frac{1}{2}}\right)^{-1}g_2(\bR)\left(\bI-w\bR^{\frac{1}{2}}\bY\bR^{\frac{1}{2}}\right)^{-1}\right)\  \textrm{for}\ \ z,w\in\C\setminus \R_+,\label{wzorek1}
	\end{equation}
	in terms of the subordination functions for free multiplicative convolution. Here $\bR,\bY$ are assumed to be free and positive non-commutative random variables  and $g_1(\bR),g_2(\bR)$ belong to unital subalgebra generated by $\bR$. This is done in Theorem \ref{ThMainExpectation}.
	
	We also prove  that (for $\beta>0$) the distribution $\mathcal{K}(\alpha,\beta,\gamma)$ is the only distribution supported on $[0,\infty)$ for which the Cauchy-Stieltjes transform $G(z)$   the  equation that has the form 
		\begin{equation}
		z(z+1)G^2(z)-(\gamma z(z+1)-(\alpha-1) (z+1)+\beta z)G(z)+\gamma z+\delta=0,
	\end{equation}
	for some $\delta\in\R$. See Proposition \ref{kummerlemma} for more details. Problems of this type where the Cauchy-Stieltjes transform emerges as solution to a certain quadratic equation  that depends on unknown parameters, appear frequently in characterization problems in free probability. See for example already mentioned article \cite{PiliszekFreeKummer} or \cite{szpojankowski2017matsumoto}  where characterization the free-GiG and free-Poisson distributions was studied. The proof of uniqueness in those two papers rely on the   study of roots of $\Delta(z)$-the discriminant of equation for $G(z)$ and the geometric argument is given to show that $\Delta(z)$ has a desired form. In our paper we present new and a very simple analytic proof of this problem for $\mathcal{K}(\alpha,\beta,\gamma)$ that avoids geometric arguments.
	
	The organization of this paper is as follows. In chapter 2 we recall some basic facts from free probability theory that are needed to understand this article. In chapter 3 we discuss definitions and properties of the free-Kummer definition and the free-Poisson distributions. In Chapter 4 we compute  the  expression given by formula \ref{wzorek1}. Chapter 5 is devoted to characterization theorems and the proof of Theorem \ref{Thgeneral}.
	\section{Background and notation}\label{background}
	In this section we introduce basic notions and facts from non-commutative probability theory that are needed to understand this paper. We assume we are given a $\cgw$-probability space $\left(\calA,\varphi\right)$ i.e. $\calA$ is a unital $\cgw$-algebra and $\varphi:\calA\to\mathbb{C}$ is positive, tracial and faithful linear functional (state) such that $\varphi(\bI)=1$ where $\bI$ is  the unit of $\calA$.
	
	Elements of $\calA$ are called (non-commutative) random variables and in this paper are denoted as $\bX, \bY,\bZ$ etc.
	\subsection{Freeness and Boolean cumulants}
	Freeness is one of the basic concepts that serves as the analogue  of independence from classic probability theory and was introduced by Voiculescu in \cite{voiculescu1986addition}.
	\begin{definition}
		We say that unital  subalgebras $\calA_1,...,\calA_n$ of $\calA$ are free if for every $m$ and for every  choice of centered random variables $\bX_k\in\calA_{i_k}$  \textrm{i.e.}  $\varphi(\bX_k)=0$, $k=1,2,\ldots,m$, such that $i_1\neq i_2\neq\ldots\neq i_m$ we have 
		$$\varphi\left(\bX_1\bX_2\cdots\bX_m\right)=0.$$
		
		We say that random variables $\bX, \bY\in\calA $ are free if unital subalgebras generated by those elements are free. 
	\end{definition}
	The definition of freeness can be viewed as a rule for computing joint moments. For example if $\bX, \bY$ are free, then $\varphi(\bX \bY)=\varphi(\bX)\varphi(\bY)$.
	
	For a positive integer $n$ let us denote $[n]=\left\{1,2,\ldots,n\right\}$.  
	\begin{definition}
		\hfill
		\begin{enumerate}
			\item A partition $\pi$ of $[n]$ is a set $\pi=\left\{B_1,...,B_k\right\}$ of non-empty and pairwise disjoint subsets of $[n]$ such that $[n]=\bigcup_{i=1}^kB_i$. Elements $B_1,\ldots,B_k$ are called blocks of $\pi$. The set of all partition of $[n]$ is denoted by $\mathcal{P}(n)$.
			\item A partition $\pi\in \mathcal{P}(n)$ is called  an interval partition  if every block $B$ of $\pi$ is of the form $[n]\cap I$ for some interval $I$. The set of all $2^{n-1}$ interval partitions of $[n]$ is denoted by $Int(n)$.
		\end{enumerate}
	\end{definition}
	\begin{remark}
		The set $Int(n)$ has  a lattice structure induced by the so-called reversed refinement order. We say that $\pi_1\leq \pi_2$ if every block of the partition $\pi_1$ is contained in some block of $\pi_2$. 
		The partition $1_n$ with one block $[n]$ is the maximum element in this lattice.
	\end{remark}
	
	\begin{definition}\label{defboolcumu}
		For $n\geq 1$  the Boolean cumulant functional $\beta_n:\mathcal{A}^n\to\mathbb{C}$ is defined recursively by
		$$\forall \bX_1,\ldots,\bX_n\in\mathcal{A}:\varphi\left(\bX_1\bX_2\cdots\bX_n\right)=\sum_{\pi\in Int(n)}\beta_\pi\left(\bX_1,\ldots,\bX_n\right),$$
		
		where for $\pi=\left\{B_1,\ldots,B_k\right\}$ 
		$$\beta_\pi\left(\bX_1,\ldots,\bX_n\right)=\prod_{j=1}^{k}\beta_{|B_j|}\left(\bX_i:i\in B_j\right).$$
	\end{definition}
	In particular $\beta_1=\varphi$ and $\beta_2(\bX,\bY)=\varphi(\bX\bY)-\varphi(\bX)\varphi(\bY)$.
	
	We will need two formulas involving Boolean cumulants. They can be found in \cite{fevrier2020using} and \cite{lehner2019boolean} and were used also in \cite{szpojankowski2019conditional}.
	\begin{proposition}\label{propbool}
		Assume we are given two collections of random variables $\{\bX_1,\bX_2,\ldots,\bX_{n+1}\}$ and $\{\bY_1,\bY_2,\ldots,\bY_{n}\}$ that are free, $n\geq 1$. Then
		\begin{multline}\label{boolmain1}
			\varphi\left(\bY_1\bX_1\cdots\bY_n\bX_n\right)=\\
			=\sum_{k=0}^{n-1}\sum_{0=j_0<j_1<\ldots<j_{k+1}=n}\varphi\left(\bX_{j_1}\cdots\bX_{j_{k+1}}\right)\prod_{l=0}^k\beta_{2(j_{l+1}-j_l)-1}\left(\bY_{j_l+1},\bX_{j_l+1},\ldots, \bX_{j_{l+1}-1},\bY_{j_{l+1}}\right)
		\end{multline}
		and
		\begin{multline}\label{boolmain3}
			\beta_{2n+1}\left(\bX_1,\bY_1,\ldots,\bX_n,\bY_n,\bX_{n+1}\right)=\\
			=\sum_{k=2}^{n+1}\sum_{1=j_1<\ldots<j_{k}=n}\beta_k\left(\bX_{j_1},\ldots,\bX_{j_{k}}\right)\prod_{l=1}^{k-1}\beta_{2(j_{l+1}-j_l)-1}\left(\bY_{j_{l}},\bX_{j_l+1},\bY_{j_l+1},\ldots,\bX_{j_{l+1}-1}, \bY_{j_{l+1}-1}\right).
		\end{multline}
	\end{proposition}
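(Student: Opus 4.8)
Since \eqref{boolmain1} and \eqref{boolmain3} involve only mixed moments of the free families $\{\bX_i\}$ and $\{\bY_j\}$, I would establish them by a combinatorial argument, proving \eqref{boolmain1} first and deducing \eqref{boolmain3} from it, with the induction running on $n$. The tools are: (i) the defining property of freeness --- $\varphi$ of a product of centered elements taken alternately from the two families vanishes; (ii) the moment--Boolean-cumulant relation of Definition~\ref{defboolcumu} and its inverse, which writes $\beta_m(\bZ_1,\ldots,\bZ_m)$ as a signed sum over $Int(m)$ of products of moments of the induced sub-words; and (iii) the elementary fact (an easy induction via Definition~\ref{defboolcumu}) that $\beta_m(\bZ_1,\ldots,\bZ_m)=0$ whenever $m\ge 2$ and some $\bZ_i$ is a scalar multiple of $\bI$ --- this lets one freely replace each $\bX_i$ by $\bX_i-\varphi(\bX_i)\bI$ and each $\bY_j$ by $\bY_j-\varphi(\bY_j)\bI$ inside any Boolean cumulant.

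\textbf{Step 1: an operator-valued refinement of \eqref{boolmain1}.}
Passing to the GNS completion (which changes nothing at the level of moments, hence nothing for freeness or for \eqref{boolmain1}--\eqref{boolmain3}), I may assume the trace-preserving conditional expectation $E$ onto the subalgebra $\mathcal{B}$ generated by $\{\bX_i\}$ exists; it is a $\mathcal{B}$-bimodule map and $E(\bY_j)=\varphi(\bY_j)\bI$. Put $b(l,l'):=\beta_{2(l'-l)-1}\!\left(\bY_{l+1},\bX_{l+1},\ldots,\bX_{l'-1},\bY_{l'}\right)\in\C$ for the ``inner'' cumulants, so $b(l,l+1)=\varphi(\bY_{l+1})$. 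The key identity is
\[
E\!\left(\bY_1\bX_1\bY_2\bX_2\cdots\bY_n\bX_n\right)=\sum_{0=j_0<j_1<\cdots<j_{k+1}=n}b(j_0,j_1)\,\bX_{j_1}\,b(j_1,j_2)\,\bX_{j_2}\cdots b(j_k,j_{k+1})\,\bX_{j_{k+1}},
\]
proved by induction on $n$: write $\bY_1=\left(\bY_1-\varphi(\bY_1)\bI\right)+\varphi(\bY_1)\bI$; the scalar summand gives $\varphi(\bY_1)\,\bX_1\,E(\bY_2\bX_2\cdots\bY_n\bX_n)$, which is the $j_1=1$ part by the inductive hypothesis, while for the centered summand one keeps centering the leading $\bX$'s and $\bY$'s and uses (i) and the bimodule property to see that $E$ annihilates the leading alternating \emph{centered} block unless it is closed off by the first surviving $\bX$ --- whose label is exactly $j_1$, producing exactly the weight $b(0,j_1)$. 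Applying $\varphi$ (using $\varphi\circ E=\varphi$, and that the $b(j_l,j_{l+1})$ are scalars so $\varphi(\bX_{j_1}\cdots\bX_{j_{k+1}})$ factors out) gives \eqref{boolmain1}. One can avoid the completion entirely by running the same centering computation at the level of moments with an arbitrary $\bX$-word appended on the right.

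\textbf{Step 2: from \eqref{boolmain1} to \eqref{boolmain3}.}
Expand the left side of \eqref{boolmain3} by the inverse of the relation in Definition~\ref{defboolcumu}: $\beta_{2n+1}$ of the alternating word $\bX_1\bY_1\cdots\bY_n\bX_{n+1}$ equals a signed sum over $\pi\in Int(2n+1)$ of products over the blocks of $\pi$ of $\varphi$ of the induced sub-word. Every block is an interval of an alternating word that still begins and ends with an $\bX$, so \eqref{boolmain1} (or the trivial $\varphi(\bX_i)$) applies to each factor. Substituting and resumming, the signs together with the sums coming from \eqref{boolmain1} reorganise so that the $\bX$'s that ``survive to the outside'' assemble --- again by the inverse Definition~\ref{defboolcumu} relation, now for the algebra generated by the $\bX_i$'s --- into $\beta_k(\bX_{j_1},\ldots,\bX_{j_k})$, while all the material strictly between two consecutive surviving $\bX$'s collapses to a single odd Boolean cumulant of the interleaved $\bY$'s (and intervening $\bX$'s); this is exactly the shape of the factors in \eqref{boolmain3}. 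Alternatively, \eqref{boolmain3} can be obtained directly by the peeling induction of Step~1 (invoking \eqref{boolmain1} in the inductive step), or, more structurally, by expanding both sides in free cumulants and using that mixed free cumulants of the two free families vanish.

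\textbf{Main obstacle.}
The content is essentially bookkeeping, and the crux is twofold: (a) in Step~1, confirming that the coefficient created when the leading centered cluster closes off at position $j_1$ is \emph{precisely} the odd Boolean cumulant $b(0,j_1)$ and not some other combination of moments --- this is where (iii) and the recursion of Definition~\ref{defboolcumu} must be used carefully; and (b) in Step~2, verifying that the nested signed sums collapse to give $\beta_k$ of the surviving $\bX$'s and \emph{Boolean} (rather than free or mixed) cumulants of the intervening $\bY$-blocks. Setting up the induction so that the gap indices $0=j_0<j_1<\cdots<j_{k+1}=n$ --- and hence the interval structure underlying the whole identity --- emerge transparently is where the real work lies.
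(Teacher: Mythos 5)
The paper contains no proof of Proposition \ref{propbool} to compare against: both formulas are imported from the literature, with \eqref{boolmain1} and \eqref{boolmain3} cited from \cite{fevrier2020using} and \cite{lehner2019boolean}. Measured against those sources, your Step 1 is essentially a correct reconstruction of the known argument for \eqref{boolmain1}: one proves the operator-valued expansion of $\varphi\left(\bY_1\bX_1\cdots\bY_n\bX_n\mid\mathcal{B}\right)$ with the odd Boolean cumulants $b(j_l,j_{l+1})$ as scalar coefficients (your $n=1,2$ cases check out, and the peeling/centering induction is the standard route), and then applies $\varphi$. Your point (iii) about cumulants vanishing on scalar entries is indeed the lemma that makes the identification of the closing coefficient with $\beta_{2j_1-1}$ work.

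Step 2, however, has a genuine gap as written. First, the assertion that every block of an interval partition of the word $\bX_1\bY_1\cdots\bY_n\bX_{n+1}$ ``still begins and ends with an $\bX$'' is false: any block beginning immediately after some $\bX_i$ starts with a $\bY$, and blocks may also end with a $\bY$, so \eqref{boolmain1} does not apply factorwise without first establishing the analogous moment expansions for words of type $\bX\cdots\bY$, $\bY\cdots\bY$, etc. Second, the decisive step --- that the signed sums over $Int(2n+1)$ composed with the expansions of each block ``reorganise'' into $\beta_k(\bX_{j_1},\ldots,\bX_{j_k})$ times odd Boolean cumulants of the intervening alternating words --- is exactly the content of \eqref{boolmain3} and is asserted rather than carried out; in the cited papers this identity is proved by its own inductive/combinatorial argument, not as a formal corollary of \eqref{boolmain1}. (Incidentally, a careful execution of Step 2 would also have flagged that the summation condition in \eqref{boolmain3} as printed, $1=j_1<\ldots<j_k=n$, must read $j_k=n+1$ to be consistent with the left-hand side --- e.g.\ for $n=1$ the stated range is empty while $\beta_3(\bX_1,\bY_1,\bX_2)=\varphi(\bY_1)\beta_2(\bX_1,\bX_2)$.) So: Step 1 is sound in outline, but \eqref{boolmain3} is not yet proved by your proposal.
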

	We will also need the following formula for Boolean cumulant with product as entries.
	
	\begin{proposition}[Proposition 2.12 in \cite{fevrier2020using}]\label{betanaproduktach}
		Let $1\leq i_1<i_2<...<i_m=n$ be positive integers.  Then
		\begin{equation*}
			\beta_m(\bX_1\cdots \bX_{i_1},\bX_{i_1+1}\cdots \bX_{i_2},...,\bX_{i_{m-1}+1}\cdots \bX_n)=\sum_{\stackrel{\pi\in Int(n)}{\sigma\vee\pi =1_n}}\beta_{\pi}(\bX_1,\bX_2,...,\bX_n)
		\end{equation*}
		where $$\sigma=\left\{\{1,2...,i_1\},\{i_1+1,...,i_2\},...,\{i_{m-1}+1,...,n\}\right\}\in Int(n)$$ and $\vee$ is join of partitions (smallest upper bound of two partitions in $Int(n)$).
	\end{proposition}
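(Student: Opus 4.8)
The plan is to prove the formula by Möbius inversion on the interval-partition lattice. Recall that the relation in Definition \ref{defboolcumu} expresses every moment as a sum of products of Boolean cumulants over $Int(n)$; inverting it on the lattice $(Int(n),\leq)$ yields
$$\beta_n(\bX_1,\ldots,\bX_n)=\sum_{\pi\in Int(n)}\mu(\pi,1_n)\,\varphi_\pi(\bX_1,\ldots,\bX_n),$$
where $\varphi_\pi$ denotes the product of moments over the blocks of $\pi$ and $\mu$ is the Möbius function of $Int(n)$. Writing $i_0=0$ and $Y_j=\bX_{i_{j-1}+1}\cdots\bX_{i_j}$ for the grouped variables, the same inversion on $Int(m)$ gives
$$\beta_m(Y_1,\ldots,Y_m)=\sum_{\tau\in Int(m)}\mu(\tau,1_m)\,\varphi_\tau(Y_1,\ldots,Y_m).$$

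First I would set up the order isomorphism between $Int(m)$ and the interval $[\sigma,1_n]\subseteq Int(n)$: a block $\{j,j+1,\ldots,j'\}$ of $\tau$ is sent to the interval $\{i_{j-1}+1,\ldots,i_{j'}\}$, producing a partition $\widehat{\tau}\in Int(n)$ coarser than $\sigma$. This map is a lattice isomorphism onto $[\sigma,1_n]$ with $\widehat{1_m}=1_n$, and it carries the Möbius function of $Int(m)$ onto that of the sublattice $[\sigma,1_n]$. Because each block of $\widehat{\tau}$ is a union of consecutive blocks of $\sigma$, hence itself an interval of $[n]$, the grouped factor equals $\varphi_\tau(Y_1,\ldots,Y_m)=\varphi_{\widehat{\tau}}(\bX_1,\ldots,\bX_n)$; expanding each ordinary moment of a consecutive product over the interval partitions of the corresponding block then gives $\varphi_{\widehat{\tau}}(\bX_1,\ldots,\bX_n)=\sum_{\pi\in Int(n),\,\pi\leq\widehat{\tau}}\beta_\pi(\bX_1,\ldots,\bX_n)$.

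Next I would interchange the two summations and read off the coefficient of each $\beta_\pi$. Substituting the last identity yields
$$\beta_m(Y_1,\ldots,Y_m)=\sum_{\pi\in Int(n)}\beta_\pi(\bX_1,\ldots,\bX_n)\sum_{\substack{\tau\in Int(m)\\ \widehat{\tau}\geq\pi}}\mu(\tau,1_m).$$
Since $\widehat{\tau}\geq\sigma$ always holds, the constraint $\widehat{\tau}\geq\pi$ is equivalent to $\widehat{\tau}\geq\pi\vee\sigma$. If $\sigma\vee\pi=1_n$ this forces $\widehat{\tau}=1_n$, i.e.\ $\tau=1_m$, so the inner sum is $\mu(1_m,1_m)=1$. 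If $\sigma\vee\pi\neq 1_n$, then $\pi\vee\sigma\in[\sigma,1_n]$ equals $\widehat{\tau_0}$ for a unique $\tau_0\neq 1_m$, and under the isomorphism the inner sum becomes $\sum_{\tau_0\leq\tau\leq1_m}\mu(\tau,1_m)$, which vanishes by the defining property of the Möbius function. Hence only those $\pi$ with $\sigma\vee\pi=1_n$ survive, each with coefficient $1$, which is exactly the asserted formula.

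The individual steps are routine once the framework is fixed; the point requiring care—and the main obstacle—is to verify rigorously that $\tau\mapsto\widehat{\tau}$ is a genuine lattice isomorphism onto $[\sigma,1_n]$ that intertwines both the join and the Möbius functions, so that the translation of $\widehat{\tau}\geq\pi$ into the join condition $\sigma\vee\pi=1_n$ and the concluding application of $\sum_{\tau\geq\tau_0}\mu(\tau,1_m)=\delta_{\tau_0,1_m}$ are both legitimate.
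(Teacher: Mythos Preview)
Your argument is correct and is essentially the standard M\"obius-inversion proof of the products-as-entries formula for cumulants, specialized here to the Boolean/interval setting. The only caveat is that the present paper does not supply a proof of this proposition at all: it is quoted verbatim from \cite{fevrier2020using} (Proposition~2.12 there) and used as a black box. So there is no ``paper's own proof'' to compare against; your M\"obius-inversion derivation is exactly the type of argument one finds in the cited source, and all the steps---the order isomorphism $Int(m)\cong[\sigma,1_n]$, the identification $\varphi_\tau(Y_1,\ldots,Y_m)=\varphi_{\widehat{\tau}}(\bX_1,\ldots,\bX_n)$, and the vanishing $\sum_{\tau\geq\tau_0}\mu(\tau,1_m)=\delta_{\tau_0,1_m}$---are routine in this Boolean-lattice context, since $Int(n)$ is canonically isomorphic to the Boolean lattice $2^{[n-1]}$ via the set of breakpoints.
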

	
	\subsection{Conditional expectations}
	Assume that $(\calA,\varphi)$  is a $W^{*}$ probability space, i.e., $\calA$  is a finite von Neumann algebra and
	$\varphi$ a faithful, normal, tracial state. If  $\mathcal{B}\subset\calA$ is von Neumann subalgebra, we denote by $\varphi\left(\cdot\mid\mathcal{B}\right)$ the conditional expectation with respect to $\mathcal{B}$. That is $\varphi\left(\cdot\mid\mathcal{B}\right):\calA\to\mathcal{B}$ is a  faithful, normal projection such that $\varphi\circ\left[ \varphi\left(\cdot\mid\mathcal{B}\right)\right]=\varphi(\cdot)$. The map $ \varphi\left(\cdot\mid\mathcal{B}\right)$ is a $\mathcal{B}$-bimodule map i.e. $$\varphi\left(\bY_1\bX\bY_2\mid\mathcal{B}\right)=\bY_1\varphi\left(\bX\mid\mathcal{B}\right)\bY_2$$ for all $\bX\in\calA$ and $\bY_1,\bY_2\in\mathcal{B}$.  For the existence of the conditional expectation see e.g   (\cite{Takesaki1}, Proposition 2.36).
	
	The conditional expectation has the following important property
	\begin{equation}
		\varphi\left(\bX\mid\mathcal{B}\right)=\bY\iff\ \ \bY\in\mathcal{B}\ \ \mbox{and}\ \ \forall\, \bZ \in \mathcal{B}:\ \varphi(\bX\bZ)	=\varphi(\bY\bZ).\label{CondProp2}
	\end{equation}
	
	\subsection{Distribution of a random variable and analytic tools.}
	\begin{definition}
		The distribution of a  self-adjoint random variable $\bX\in\calA$ is a uniquely determined,  compactly supported, probability measure $\mu_\bX$ on the real line such that for all $n\geq 1$
		$$\varphi\left(\bX^n\right)=\int_{\mathbb{R}}x^n\mu_{\bX}(dx).$$
	\end{definition}
	We list here some analytic tools and their properties that we use in this paper.
	\begin{enumerate}
		\item The Cauchy-Stieltjes transform of  a compactly supported measure  $\mu$ on the real line is  the map 
		$$G_\mu(z)=\int_{\mathbb{R}}\frac{\mu(dx)}{z-x},$$
		defined for  $z\in\mathbb{C}\setminus\textrm{supp}(\mu)$.
		It is known that the Cauchy-Stieltjes transform is an analytic map on $\mathbb{C}\setminus\textrm{supp}(\mu)$ and $G_\mu:\mathbb{C}^+\to \mathbb{C}^-$.\newline
		If $\bX$ is a self-adjoint random variable we write $G_{\bX}$ for $G_{\mu_\bX}$. Note that
		$$\gx=\varphi\left((z\bI-\bX)^{-1}\right)=\int_{\mathbb{R}}\frac{\mu_\bX(dx)}{z-x}.$$
		
		\item The moment transform of $\bX$  is  defined  for all $z\in\mathbb{C}$ such that $\bI-z\bX$ is invertible  as  $$M_\bX(z)=\varphi\left(z\bX(\bI-z\bX)^{-1}\right).$$
		$M_\bX$ is an an analytic function in some  neighborhood of $0$ and one has 
		$$M_\bX(z)=	\sum_{k=1}^\infty\varphi(\bX^k)z^k.$$
		The moment transform $M_\bX$ and the Cauchy-Stieltjes $\gx$ transform are related by the equation
		\begin{equation}
			G_\bX\left(\frac{1}{z}\right)=z(1+M_\bX(z)).\label{M-to-G}
		\end{equation}
		\item For a self adjoint, positive but non-zero random variable  $\bX$ one can define so called $S$-transform. The $S$-transform  $S_{\bX}$ is defined  in the neighborhood of $(\mu_{\bX}(\{0\})-1,0)$ in $\C$ as
		\begin{equation*}
			S_\bX(z)=\frac{z+1}{z}M^{\langle -1\rangle}_\bX(z),
		\end{equation*}
		where $M^{\langle -1\rangle}_\bX$ is the inverse function of $M_\bX$. The $S$-transform has the following useful property
		\begin{equation}
			S_{\bX\bY}=S_\bX\cdot S_{\bY}\label{Stransformproperty}
		\end{equation} when $\bX$ and $\bY$ are free. 
		
		\item The $\eta$-transform of $\bX$ is defined by $$\eta_\bX(z)=\frac{M_\bX(z)}{M_\bX(z)+1}.$$
		In some neighborhood of $0$ one has 
		$$\eta_\bX(z)=\sum_{k=1}^\infty\beta_{k}(\bX,...,\bX)z^k.$$
	\end{enumerate}
	All of these transformations uniquely determine moments of a self-adjoint random variable $\bX$ and thus also uniquely determine its distribution.
	
	\subsection{Subordination}
	Let $\bX,\bY$ be free self-adjoint and positive random variables. In \cite{biane1998processes} Biane establish a fundamental connection between moment transforms of $\bX,\bY$ and $\bX\bY$. It involves two functions $\omega_1,\omega_2:\C\setminus\R_+\to\C$ which can be defined as unique holomorphic functions on  $\C\setminus\R_+$ satisfying $\omega_k{(\overline{z})}=\overline{\omega_k(z)}$ and $\arg(\omega_k(z))\geq \arg(z)$ for all $z\in \C^+$ and $k=1,2$ and
	\begin{equation}
		M_{\bX\bY}(z)=M_{\bY}(\oj)=M_{\bX}(\od).\label{subordiantion}
	\end{equation}
	Because of the last property $\omega_1,\omega_2$  are called the subordination functions.
	
	In the framework of von Neumann algebras \eqref{subordiantion} can be stated in a more general way
	\begin{equation}
		\varphi\left((\bI-z\bX^{\frac{1}{2}}\bY \bX^{\frac{1}{2}})^{-1}\mid\bX\right)=\left(\bI-\od\bX\right)^{-1}. \label{conditionalsubordination}
	\end{equation}
	
	The subordination functions $\oj, \od$ can be expanded near the origin  into a Taylor series with Bolean cumulants as coefficients:
	\begin{align}
		\omega_1(z)&=\sum_{n=1}^\infty\beta_{2n-1}(\bX,\bY,\bX,\ldots,\bY,\bX)z^{n},\label{rowzw1}\\
		\omega_2(z)&=\sum_{n=1}^\infty\beta_{2n-1}(\bY,\bX,\bY,\ldots,\bX,\bY)z^{n}.\label{rowzw2}
	\end{align}
	See \cite{lehner2019boolean} for details.
	
	The $S$-transform property $S_{\bX\bY}=S_\bX\cdot S_{\bY}$ and the subordination property \eqref{subordiantion} imply also the following useful identity
	\begin{equation}
		\oj\od=z\cdot\eta_{\bX\bY}(z)=z\frac{M_{\bX\bY}(z)}{M_{\bX\bY}(z)+1}, \ \mbox{for}\ z\in \C\setminus\R_+.\label{usefulidentity}
	\end{equation}
	
	\section{Free–Poisson and free–Kummer distributions and their properties}
	
	\subsection{Free-Poisson Distribution} We say that a probability measure $\nu$   is free Poisson $\nu(\lambda,\gamma)$  or Marchenko-Pastur distribution with parameters $\lambda\geq 0, \gamma>0$  if
	$$\nu=\max\{0,1-\lambda\}\delta_0+\lambda\nu_1,$$
	where $\nu_1$ is a measure with density
	$$\frac{1}{2\pi\gamma x}\sqrt{4\lambda\gamma^2-(x-\gamma(1+\lambda))^2}\ \mathbbm{1}_{\left(\gamma(1-\sqrt{\lambda})^2,\gamma(1+\sqrt{\lambda})^2\right)}(x).$$
	\newline
	The $S$-transform of the free Poisson distribution $\nu(\lambda,\gamma)$ is equal $$S(z)=\frac{1}{\gamma z+\lambda\gamma}.$$
	\subsection{Free-Kummer distribution}
	For  $\alpha>0$, $\alpha\neq 1$,  $\gamma>0$, $\beta\in\R$ the free-Kummer distribution $\mathcal{K}(\alpha,\beta,\gamma)$  is a probability measure $\mu_{\alpha,\beta,\gamma}$ defined as
	$$\mu_{\alpha,\beta,\gamma}=\max\{0,1-\alpha\}\delta_0+\alpha\mu_1,$$
	where  $\mu_1$ is a measure with the density
	\begin{equation}
		\frac{1}{2\pi\alpha}\sqrt{(x-a)(b-x)}\left(\tfrac{|\alpha-1|}{x\sqrt{ab}}-\tfrac{\beta}{(1+x)\sqrt{(a+1)(b+1)}}\right)\mathbbm{1}_{(a,b)}(x),\label{KummerDensity}
	\end{equation}
	and  $(a,b) $ is the unique solution of  
	\begin{equation}\label{freeKummerab}
		\left\{\begin{array}{lc}
			\gamma+\frac{\beta}{\sqrt{(a+1)(b+1)}}-\frac{|\alpha-1|}{\sqrt{ab}}&=0,\\
			\gamma\frac{a+b}{2}-\alpha+1+\beta-\frac{\beta}{\sqrt{(a+1)(b+1)}}&=2,
		\end{array}\right.
	\end{equation}
	satisfying $0<a<b$.
	The free-Kummer distribution  was defined for $\alpha>1$ in \cite{PiliszekFreeKummer} as a limit of empirical  spectral distribution of Kummer matrices. The above definition extends the definition from that paper for $\alpha\in(0,1)$ and one can easily check that for $\alpha\in (0,1)$, $\beta \in\R$, $\gamma>0$ we have
	\begin{equation*}
		\mu_{\alpha,\beta,\gamma}=(1-\alpha)\delta_0+\alpha\mu_{\frac{1}{\alpha},\frac{\beta}{\alpha},\frac{\gamma}{\alpha}}.  
	\end{equation*}
	
	We note here that in \cite{PiliszekFreeKummer} the parameter $\alpha$ was  assumed to be positive but not necessarily greater than $1$. One can  see that the condition $\alpha\leq 1$  contradicts some assumptions made in that paper and the definition of free-Kummer distribution does not work in this case.   To reflect this fact we changed formulation of Theorem \ref{FreeHVproperty} by adding assmuptions $\alpha,\alpha+\beta>1$. 
	
	We will show now how to extend the definition of  $\mathcal{K}(\alpha,\beta,\gamma)$ for $\alpha=1$. The reader can find more details in the Appendix. 
	
	For $\alpha=1$ and $\beta<1-(1+\sqrt{\gamma})^2$ the above definition still works and in this case $\mathcal{K}(1,\beta,\gamma)$ is the distribution of $X-1$ where $X$ has the free-Poisson distribution $X\sim  \nu(1-\beta,1/\gamma)$.
	
	For $\alpha=1$ and $\beta\geq1-(1+\sqrt{\gamma})^2$  we define   $\mathcal{K}(1,\beta,\gamma)$
	to be a probability measure which has the density
	\begin{equation}
		\frac{1}{2\pi}\sqrt{x(b-x)}\left(\tfrac{\sigma}{x}-\tfrac{\beta}{(1+x)\sqrt{b+1}}\right)\mathbbm{1}_{(0,b)}(x),\label{kummerdensity1}
	\end{equation}
	where  $\sigma=\gamma+\frac{\beta}{\sqrt{b+1}}$ and $b$ is the unique positive solution of \begin{equation}
		\gamma\frac{b}{2}+\beta-\frac{\beta}{\sqrt{(b+1)}}=2.\label{Eqforb}
	\end{equation} 
	
	The  Cauchy-Stieltjes transform of $\mathcal{K}(\alpha,\beta,\gamma)$ for $\alpha>1$ was calculated in \cite{PiliszekFreeKummer} and is  given by the following formula
	\begin{equation}
		G_{\alpha,
			\beta,\gamma}(z)=\tfrac{1}{2}\left[\gamma-\tfrac{\alpha-1}{z}+\tfrac{\beta}{z+1}+\sqrt{(z-a)(z-b)}\left(\tfrac{\beta}{(z+1)\sqrt{(a+1)(b+1)}}-\tfrac{|\alpha-1|}{z\sqrt{ab}}\right)\right].\label{CauchyStieltjesKummer}
	\end{equation}
	For $0<\alpha<1$ we have $G_{\alpha,\beta,\gamma}(z)=\frac{1-\alpha}{z}+\alpha G_{\frac{1}{\alpha},\frac{\beta}{\alpha},\frac{\gamma}{\alpha}}(z)$ and in consequence the formula \eqref{CauchyStieltjesKummer} remains valid also  for $\alpha\in(0,1)$. 
	
	For $\alpha=1$, $\beta>1-(1+\sqrt{\gamma})^2$, $\gamma>0$  one can check that 
	\begin{equation*}
		G_{\alpha,\beta,\gamma}(z)=\tfrac{1}{2}\left[\gamma+\tfrac{\beta}{z+1}+\sqrt{z(z-b)}\left(\tfrac{\beta}{(z+1)\sqrt{b+1}}-\tfrac{\sigma}{z}\right)\right]. \label{CauchyStieltjesKummer2}
	\end{equation*}
	\begin{remark}\label{EqForGremak}
		Using \eqref{freeKummerab} for $\alpha \neq 1$ or \eqref{Eqforb} when $\alpha=1$ one can check that $G=G_{\alpha,
			\beta,\gamma}$ solves the following quadratic equation
		\begin{equation}
			z(z+1)G^2(z)-(\gamma z(z+1)-(\alpha-1) (z+1)+\beta z)G(z)+\gamma z+\delta=0.\label{EquationforG}
		\end{equation}
		Later we will show that, under some conditions, this equation characterizes free-Kummer distribution. See Proposition \ref{kummerlemma} for more details.	
	\end{remark}

	\section{The main technical results}
	Let $\bR$ and $\bY$ be free,  self-adjoint and positive random variables from some $W^*$ probability space $(\mathcal{A},\varphi)$.  Let $g_1(\bR)$ and $g_2(\bR)$  be two elements in the unital subalgebra generated by $\bR$. We also denote  $\bU=\bR^{\frac{1}{2}}\bY\bR^{\frac{1}{2}}$. The goal of this section is to  evaluate the  function following function in terms of subordination functions
	\begin{equation}
		k(z,w)=	\varphi\left(g_1(\bR)(\bI-z\bU)^{-1}g_2(\bR)(\bI-w\bU)^{-1}\right)\label{Defofk}.
	\end{equation}
	
	\begin{theorem}\label{ThMainExpectation}
		For $z,w$ in some neighborhood of $0$ in $\C$  we have
		\begin{equation*}
			k(z,w)=k_1(z,w)+k_2(z,w),
		\end{equation*}
		with
		\begin{eqnarray*}
			k_1(z,w)&=&\varphi\left(g_1(\bR)g_2(\bR)(\bI-\od\bR)^{-1}(\bI-\odw\bR)^{-1}\right),\\
			k_2(z,w)&=&\frac{\left(w\od-z\odw\right)\left(\od-\odw\right)}{\left(\muz-\muw\right)(z-w)}A_1(z,w)A_2(z,w)\\
			&=&\frac{w\od-z\odw}{z-w}\frac{A_1(z,w)A_2(z,w)}{B(z,w)},
		\end{eqnarray*}
		where
		\begin{eqnarray*}
			A_k(z,w)&=&\varphi\left(\bR g_k(\bR)(\bI-\od\bR)^{-1}(\bI-\odw\bR)^{-1}\right),\ \ k=1,2,\\
			B(z,w)&=&\varphi\left(\bR (\bI-\od\bR)^{-1}(\bI-\odw\bR)^{-1}\right)=\frac{\muz-\muw}{\od-\odw}.
		\end{eqnarray*}
		
		Here $\od$ is the subordination function satisfying $\muz=M_{\bR}(\od)$.
	\end{theorem}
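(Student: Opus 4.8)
The plan is to work with formal power series in $z,w$ near the origin (where the resolvents, the subordination functions $\od,\odw$ and the transforms $\muz,\muw$ all converge), to peel off $k_1$ by a conditional–expectation argument, and to reduce the remaining part to a Boolean–cumulant computation via \eqref{boolmain1} and \eqref{rowzw2}. Write $E:=\varphi(\,\cdot\mid\bR)$ for the conditional expectation onto $W^*(\bR)$ and put $\Phi_z:=(\bI-\od\bR)^{-1}$, $\Phi_w:=(\bI-\odw\bR)^{-1}$.

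\emph{Step 1 (peeling off $k_1$).} By \eqref{conditionalsubordination} (applied with $\bX=\bR$), $E[(\bI-z\bU)^{-1}]=\Phi_z$, so writing $(\bI-z\bU)^{-1}=\Phi_z+P(z)$ and $(\bI-w\bU)^{-1}=\Phi_w+Q(w)$ we have $E[P(z)]=E[Q(w)]=0$. Expanding $k(z,w)=\varphi\big(g_1(\bR)(\Phi_z+P(z))g_2(\bR)(\Phi_w+Q(w))\big)$ into four terms: the term $\varphi(g_1(\bR)\Phi_zg_2(\bR)\Phi_w)$ equals $k_1(z,w)$, since everything lies in the commutative algebra $W^*(\bR)$; the term $\varphi(g_1(\bR)\Phi_zg_2(\bR)Q(w))$ vanishes because $g_1(\bR)\Phi_zg_2(\bR)\in W^*(\bR)$ and $E$ is a $W^*(\bR)$–bimodule map, whence $\varphi(g_1(\bR)\Phi_zg_2(\bR)Q(w))=\varphi\big(g_1(\bR)\Phi_zg_2(\bR)E[Q(w)]\big)=0$; and $\varphi(g_1(\bR)P(z)g_2(\bR)\Phi_w)=\varphi\big(g_2(\bR)\Phi_wg_1(\bR)P(z)\big)=0$ likewise, after a cyclic rotation under $\varphi$. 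Hence $k(z,w)=k_1(z,w)+\varphi\big(g_1(\bR)P(z)g_2(\bR)Q(w)\big)$, and it remains to identify the last term with $k_2(z,w)$.

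\emph{Step 2 (reduction to a marked Boolean–cumulant series).} Writing $P(z)=\sum_{m\ge1}z^m(\bU^m-E[\bU^m])$ and similarly for $Q$, and using the bimodule property of $E$ once more to collapse the mixed terms, one gets
\[\varphi\big(g_1(\bR)P(z)g_2(\bR)Q(w)\big)=\sum_{m,n\ge1}z^mw^n\Big(\varphi\big(g_1(\bR)\bU^mg_2(\bR)\bU^n\big)-\varphi\big(g_1(\bR)E[\bU^m]g_2(\bR)E[\bU^n]\big)\Big).\]
For the first expectation I would use $\bU=\bR^{1/2}\bY\bR^{1/2}$, traciality and a cyclic rotation to write it as $\varphi(\bY b_1\bY b_2\cdots\bY b_{m+n})$ with all $b_i\in W^*(\bR)$ equal to $\bR$ except $b_m=\bR^{1/2}g_2(\bR)\bR^{1/2}$ and $b_{m+n}=\bR^{1/2}g_1(\bR)\bR^{1/2}$, and then apply \eqref{boolmain1}. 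By \eqref{rowzw2} the ``pure'' Boolean cumulants $\beta_{2d-1}(\bY,\bR,\bY,\dots,\bR,\bY)$ are exactly the Taylor coefficients of $\od$ (and, carrying the weight $w$, of $\odw$), so the corresponding geometric series resum into $\Phi_z$ and $\Phi_w$. The terms of \eqref{boolmain1} in which the marked slot $m$ is a block boundary of the interval partition assemble, after resummation, into $\varphi\big(g_1(\bR)g_2(\bR)(\Phi_z-\bI)(\Phi_w-\bI)\big)$, which is cancelled exactly by the subtracted term (note $\sum_{m\ge1}z^mE[\bU^m]=\Phi_z-\bI$). What survives are the terms in which $m$ is interior to a block; summing the left blocks and the left half of the straddling block against the $z$–series, and the right blocks and the right half against the $w$–series, these factor as $A_1(z,w)\cdot C(z,w)$, where $A_1(z,w)=\varphi\big(\bR g_1(\bR)\Phi_z\Phi_w\big)$ is as in the statement and
\[C(z,w):=\sum_{r,s\ge1}z^rw^s\,\beta_{2(r+s)-1}\big(\bY,\bR,\dots,\bR,\ g_2(\bR)\bR,\ \bR,\dots,\bR,\bY\big),\]
the marked entry $g_2(\bR)\bR$ sitting in the $r$-th of the $r+s-1$ ``$\bR$–slots''.

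\emph{Step 3 (evaluating $C(z,w)$; the crux).} Treating $g_2(\bR)\bR$ as the product $g_2(\bR)\cdot\bR$ and applying Proposition \ref{betanaproduktach}, the marked cumulant splits as $\beta_{2(r+s)}\big(\bY,\bR,\dots,\bR,\ g_2(\bR),\bR,\ \bR,\dots,\bR,\bY\big)$ plus a product of two strictly shorter cumulants; in the latter each factor is a Boolean cumulant of an \emph{alternating} word in the two free subalgebras $W^*(\bY)$ and $W^*(\bR)$ whose first and last entries lie in \emph{different} subalgebras, hence vanishes. Resumming the remaining ``inserted–$g_2(\bR)$'' cumulants against the $z$– and $w$–series, and using once more \eqref{rowzw2} together with the partial–fraction identity $\tfrac{\bR}{(\bI-\od\bR)(\bI-\odw\bR)}=\tfrac{1}{\od-\odw}\big(\tfrac{\od\bR}{\bI-\od\bR}-\tfrac{\odw\bR}{\bI-\odw\bR}\big)$ and $\muz=M_\bR(\od)$, one arrives at
\[C(z,w)=\frac{(w\od-z\odw)(\od-\odw)}{(z-w)(\muz-\muw)}\,\varphi\big(\bR g_2(\bR)\Phi_z\Phi_w\big)=\frac{(w\od-z\odw)(\od-\odw)}{(z-w)(\muz-\muw)}\,A_2(z,w).\]
(I would first check this on $g_2(\bR)=\bI$: then the marked cumulant is the pure one $\beta_{2(r+s)-1}(\bY,\bR,\dots,\bR,\bY)=[z^{r+s}]\od$, the double sum telescopes to $\tfrac{w\od-z\odw}{z-w}$, and the right–hand side reduces to the same because then $A_2=B$ and $B=\tfrac{\muz-\muw}{\od-\odw}$ by the same identity.) Combining with $B(z,w)=\varphi(\bR\Phi_z\Phi_w)=\tfrac{\muz-\muw}{\od-\odw}$ then gives $\varphi(g_1(\bR)P(z)g_2(\bR)Q(w))=A_1(z,w)C(z,w)=\tfrac{(w\od-z\odw)(\od-\odw)}{(\muz-\muw)(z-w)}A_1(z,w)A_2(z,w)=k_2(z,w)$, which is the assertion; the equality of the two displayed forms of $k_2$ is again $B=\tfrac{\muz-\muw}{\od-\odw}$.

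\emph{Main obstacle and technicalities.} The heart of the argument is Step 3: the bookkeeping of how the marked cumulant straddles the $z$– and $w$–halves of the word, and the extraction of the precise prefactor $\tfrac{w\od-z\odw}{z-w}$. I would carry it out by reducing to $g_2(\bR)=\bR^t$ via multilinearity of Boolean cumulants and evaluating the resulting telescoping sums, with $t=1$ as the base case above. A minor point is that the manipulations with $\bR^{1/2}$ tacitly assume $\bR$ invertible; in general one replaces $\bR$ by $\bR+\varepsilon\bI$ and lets $\varepsilon\to0$, all the quantities involved being continuous in the spectral distribution of $\bR$.
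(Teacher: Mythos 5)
Your Steps 1 and 2 are sound and in fact repackage the paper's Lemmas \ref{lemmaf1} and \ref{lemmak2} in a cleaner way: peeling off $k_1$ by writing $(\bI-z\bU)^{-1}=\Phi_z+P(z)$ with $\varphi(\cdot\mid\bR)[P(z)]=0$ and killing the cross terms via the bimodule property is a nice shortcut past the paper's detour through $f$ and $f_1$, and your identification of the surviving part as $A_1(z,w)\cdot C(z,w)$ is exactly the paper's $k_2(z,w)=G(z,w)\varphi\left(h_1(\bR)(\bI-\od\bR)^{-1}(\bI-\odw\bR)^{-1}\right)$, since your $C(z,w)$ coincides with the series $G(z,w)$ of \eqref{defofG}. (One caveat: the vanishing of Boolean cumulants whose first and last entries lie in different free subalgebras, which you invoke in Step 3, is true and available in the cited literature, but it is not among the facts stated in this paper, so it would need a citation or a proof.)

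The genuine gap is Step 3, which you yourself flag as the crux: the closed form $C(z,w)=\frac{(w\od-z\odw)(\od-\odw)}{(z-w)(\muz-\muw)}A_2(z,w)$ \emph{is} the hard content of the theorem, and your sketch does not derive it. After splitting the marked entry via Proposition \ref{betanaproduktach} you are left with $\sum_{r,s\ge1}z^rw^s\beta_{2(r+s)}(\bY,\ldots,\bY,g_2(\bR),\bR,\bY,\ldots,\bY)$, a series of cumulants containing two adjacent $\bR$-algebra entries; these are not coefficients of $\od$ accessible through \eqref{rowzw2}, and no recursion is actually set up: for $g_2(\bR)=\bR^t$ the split $\bR^{t+1}=\bR^t\cdot\bR$ again produces a doubled $\bR$-slot rather than a shorter word of the same marked shape, so ``multilinearity plus telescoping plus partial fractions'' has nothing to telescope. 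In the paper the prefactor $\frac{w\od-z\odw}{z-w}\cdot\frac{\od-\odw}{\muz-\muw}$ emerges only from a coupled system of functional equations for $G(z,w)$ and the auxiliary series $H(z,w)$ of \eqref{defofH}, obtained by applying the second cumulant formula \eqref{boolmain3} twice, once splitting at the $\bY$'s (Lemma \ref{LemmaGbyH}) and once at the $\bR$'s (Lemma \ref{LemmaHbyG}), then solving the linear system (Corollary \ref{GHexplicitformulas}) and identifying $\eta^h_\bR(\od,\odw)$ analytically (Lemmas \ref{lematoetahzw} and \ref{wzorynaety}); the other subordination function $\oj$ enters all intermediate formulas and cancels only at the end through \eqref{usefulidentity}. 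Nothing in your plan plays the role of this system or of formula \eqref{boolmain3}, and your verification for $g_2(\bR)=\bI$ only checks consistency of the asserted answer, not a mechanism that produces it. (A minor point: the $\varepsilon$-regularization at the end is unnecessary, since $\bR^{1/2}$ exists for any positive $\bR$ and invertibility is never used.)
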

	
	\begin{remark}
		The function $k(z,w)$ is continuous  for $(z,w)\in (\C\setminus\R_+)^2$ and analytic in both $z$ and $w$. For fixed $w$ in some neighborhood of $0$ the function $k_1(z,w)+k_2(z,w)$ is meromorphic in $z\in \C\setminus\R_+$ and agrees with $k(z,w)$ in the neighborhood of $0$.
		Uniqueness of analytic continuation  shows that $k_1(\cdot,w)+k_2(\cdot,w)$ has only removable singularities and $k(z,w)=k_1(z,w)+k_2(z,w)$ holds for all $z\in \C\setminus\R_+$.  Fixing $z\in \C\setminus\R_+$ and repeating the argument shows that $k=k_1+k_2$ on $(\C\setminus\R_+)^2.$
	\end{remark}
	An immediate consequence of Theorem \ref{ThMainExpectation} and property \eqref{CondProp2} is the following Corollary.
	\begin{corollary}
		Under the assumptions of Theorem \ref{ThMainExpectation}
		\begin{multline*}
			\varphi\left((\bI-z\bU)^{-1}g(\bR)(\bI-w\bU)^{-1}\mid\bR\right)=g(\bR)(\bI-\od\bR)^{-1}(\bI-\odw\bR)^{-1}\\
			+\frac{w\od-z\odw}{z-w}\frac{A(z,w)}{B(z,w)}\bR(\bI-\od\bR)^{-1}(\bI-\odw\bR)^{-1},
		\end{multline*}
		where
		\begin{eqnarray*}
			A(z,w)&=&\varphi\left(\bR g(\bR)(\bI-\od\bR)^{-1}(\bI-\odw\bR)^{-1}\right),\\
			B(z,w)&=&\varphi\left(\bR (\bI-\od\bR)^{-1}(\bI-\odw\bR)^{-1}\right).
		\end{eqnarray*}
	\end{corollary}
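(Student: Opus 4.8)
The plan is to compute $k(z,w)$ as a power series in $z$ and $w$ near the origin, i.e.\ to evaluate $\varphi\!\left(g_1(\bR)\bU^m g_2(\bR)\bU^n\right)$ for all $m,n\ge 0$, and then to recognise the resulting generating function as $k_1+k_2$. The cases $mn=0$ follow at once from the conditional subordination relation \eqref{conditionalsubordination}: when $n=0$, traciality gives $\varphi(g_1(\bR)\bU^m g_2(\bR))=\varphi\!\left(g_1(\bR)g_2(\bR)\,\varphi(\bU^m\mid\bR)\right)$, so $\sum_{m\ge 0}z^m\varphi(g_1(\bR)\bU^m g_2(\bR))=\varphi\!\left(g_1(\bR)g_2(\bR)(\bI-\od\bR)^{-1}\right)$, which is $k_1(z,0)$ while $k_2(z,0)=0$ (similarly for $m=0$). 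For $m,n\ge 1$, using $\bU=\bR^{1/2}\bY\bR^{1/2}$ and cycling $g_1(\bR)\bR^{1/2}$ around the trace,
\[
\varphi\!\left(g_1(\bR)\bU^m g_2(\bR)\bU^n\right)=\varphi\!\left(\bY(\bR\bY)^{m-1}\,p_2\,\bY(\bR\bY)^{n-1}\,p_1\right),\qquad p_i:=\bR^{1/2}g_i(\bR)\bR^{1/2}\in W^{*}(\bR).
\]
This is an alternating word in the free pair $\big(W^{*}(\bR),\bY\big)$ that begins with $\bY$ and ends with the $W^{*}(\bR)$-element $p_1$, so Proposition~\ref{propbool}, formula~\eqref{boolmain1}, applies, with the $\bY$-slots of the formula filled by $\bY$ and the $\bX$-slots by the letters $\bR,\dots,p_2,\dots,\bR,p_1$.

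Expanding by \eqref{boolmain1} writes each such trace as a sum over interval partitions (``gap structures'') of the $N=m+n$ occurrences of $\bY$: every gap block contributes a Boolean cumulant $\beta_{2d-1}(\bY,\mathbf r,\bY,\dots,\mathbf r,\bY)$ with $\bY$ at both ends and $W^{*}(\bR)$-letters $\mathbf r$ in between, while the right endpoints of the blocks are collected into one moment $\varphi(\mathbf r\cdots\mathbf r)$. I then multiply by $z^m w^n$ (so the $m$ copies of $\bY$ coming from $\bU^m$ carry weight $z$ and the $n$ from $\bU^n$ carry $w$), sum over $m,n\ge 1$, and organise the sum by the positions of $p_1$ and $p_2$. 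Since $p_1$ is the last letter, it is always a block right endpoint, hence always enters the moment. Two regimes remain: (I) $p_2$ is also a block right endpoint, in which case every gap block is a pure block $\beta_{2d-1}(\bY,\bR,\dots,\bR,\bY)$ and the moment is $\varphi(\bR^{s-1}p_2\bR^{k-s}p_1)=\varphi(\bR^{k+1}g_1g_2)$; (II) $p_2$ is internal to the (necessarily unique) gap block that straddles the passage from the $z$-weighted to the $w$-weighted copies of $\bY$, all other gap blocks being pure, and the moment is $\varphi(\bR^{k}p_1)=\varphi(\bR^{k+1}g_1)$.

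The analytic input is that a maximal run of $r$ pure gap blocks lying entirely in the $z$-region resums, on summing over the block sizes, to $\od(z)^{r}$, because $\od(z)=\sum_{d\ge 1}\beta_{2d-1}(\bY,\bR,\dots,\bR,\bY)\,z^{d}$ by \eqref{rowzw2} (and likewise $\odw(w)^{r}$ in the $w$-region). Carrying this out in regime (I) and combining with the $mn=0$ boundary terms, the geometric sums $\sum_{r\ge 0}(\od\bR)^{r}=(\bI-\od\bR)^{-1}$ and $\sum_{r\ge 0}(\odw\bR)^{r}=(\bI-\odw\bR)^{-1}$ emerge inside $\varphi$, and a short algebraic cancellation of the boundary contributions leaves precisely $k_1(z,w)=\varphi\!\left(g_1(\bR)g_2(\bR)(\bI-\od\bR)^{-1}(\bI-\odw\bR)^{-1}\right)$. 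In regime (II) the pure runs to the left and to the right of the $p_2$-block, together with the moment $\varphi(\bR^{k+1}g_1)$, assemble (via $\sum_{a,b\ge 0}\od^{a}\odw^{b}\bR^{a+b+1}$) into $A_1(z,w)$, while the $p_2$-block itself, summed over its size $d$ and the position $u$ of $p_2$ among its $W^{*}(\bR)$-letters, produces the scalar
\[
C(z,w):=\sum_{d\ge 2}\ \sum_{u=1}^{d-1}\beta_{2d-1}\!\Big(\bY,\bR,\dots,\bR,\underbrace{\bR g_2(\bR)}_{u\text{-th }W^{*}(\bR)\text{-slot}},\bR,\dots,\bR,\bY\Big)\,z^{u}w^{d-u},
\]
so that regime (II) contributes $C(z,w)\,A_1(z,w)$ and hence $k=k_1+C\,A_1$.

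It thus remains to prove $C(z,w)=\dfrac{w\od-z\odw}{z-w}\cdot\dfrac{A_2(z,w)}{B(z,w)}$; since both sides are linear in $g_2$, this is equivalent to saying that $C(z,w)\big/\varphi\!\left(g_2(\bR)\bR(\bI-\od\bR)^{-1}(\bI-\odw\bR)^{-1}\right)$ is independent of $g_2$. For $g_2\equiv 1$ the double sum telescopes geometrically to $C(z,w)\big|_{g_2\equiv 1}=\frac{w\od-z\odw}{z-w}$, which already fixes the prefactor; the general case I would settle by reducing to monomials $g_2(\bR)=\bR^{j}$ and applying Proposition~\ref{betanaproduktach} (or the splitting formula \eqref{boolmain3}) to the cumulant with one factor $\bR$ replaced by $\bR^{j+1}$, together with the identity $B(z,w)=\dfrac{\muz-\muw}{\od-\odw}$, itself a consequence of the resolvent identity $\bR(\bI-\od\bR)^{-1}(\bI-\odw\bR)^{-1}=\frac{1}{\od-\odw}\big[(\bI-\od\bR)^{-1}-(\bI-\odw\bR)^{-1}\big]$ and $\varphi\!\left((\bI-\od\bR)^{-1}\right)=1+M_{\bR}(\od)=1+\muz$. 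Combining regimes (I) and (II) gives $k=k_1+k_2$ on a neighbourhood of the origin, and the subsequent Remark upgrades this to all of $(\C\setminus\R_+)^{2}$ by analytic continuation. The bookkeeping of the gap-structure sum — and within it the evaluation of $C(z,w)$ for general $g_2$ — is the step I expect to be the main obstacle; everything else reduces to the two geometric resummations and bilinearity in $(g_1,g_2)$.
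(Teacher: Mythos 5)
There is a genuine gap: what you prove is not the statement at hand. The Corollary asserts an \emph{operator} identity, namely that a specific element of the von Neumann algebra generated by $\bR$ equals the conditional expectation $\varphi\left((\bI-z\bU)^{-1}g(\bR)(\bI-w\bU)^{-1}\mid\bR\right)$, and it is stated \emph{under the assumptions of Theorem \ref{ThMainExpectation}}, i.e.\ with that theorem already available. The paper's proof is therefore a one-line duality argument: the candidate right-hand side lies in $W^*(\bR)$ (the coefficient $\frac{w\od-z\odw}{z-w}\frac{A(z,w)}{B(z,w)}$ is a scalar); for every $g_1(\bR)$ in the unital algebra generated by $\bR$, Theorem \ref{ThMainExpectation} with $g_2=g$ gives $\varphi\bigl(g_1(\bR)\,(\bI-z\bU)^{-1}g(\bR)(\bI-w\bU)^{-1}\bigr)=k_1(z,w)+k_2(z,w)$, which is exactly $\varphi\bigl(g_1(\bR)\cdot\mathrm{RHS}\bigr)$ since $A_1(z,w)=\varphi\bigl(g_1(\bR)\bR(\bI-\od\bR)^{-1}(\bI-\odw\bR)^{-1}\bigr)$; extending to all $\bZ\in W^*(\bR)$ by weak density of polynomials in $\bR$ and normality of $\varphi$, property \eqref{CondProp2} identifies the RHS as the conditional expectation. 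Your proposal never performs this step: it ends with the scalar identity $k=k_1+k_2$ and contains no mention of \eqref{CondProp2}, of membership of the RHS in $W^*(\bR)$, or of testing against a dense subalgebra, so the conditional-expectation formula is never actually derived.

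Secondarily, the work you do carry out is a re-derivation of Theorem \ref{ThMainExpectation} itself, which the Corollary assumes, and even that re-derivation is incomplete at its hardest point: the evaluation of your $C(z,w)$ (the paper's $G(z,w)$) for general $g_2$ is left as a plan (``reduce to monomials, apply Proposition \ref{betanaproduktach}'') rather than an argument. In the paper this is precisely the content of Lemmas \ref{LemmaGbyH}--\ref{wzorynaety}, where $G$ and $H$ are shown to satisfy a linear system whose solution involves $\eta^h_\bR(\od,\odw)$, and where $\eta^h_\bR(z,w)$ is then expressed through resolvents; your computation for $g_2\equiv 1$ is correct but does not by itself justify the claimed independence of $C(z,w)/\varphi\bigl(g_2(\bR)\bR(\bI-\od\bR)^{-1}(\bI-\odw\bR)^{-1}\bigr)$ from $g_2$. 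For the Corollary as stated, the efficient and complete route is the duality argument above; if you insist on reproving the Theorem first, the straddling-block generating function must be handled in full.
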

	
	\subsection{Proof of Theorem \ref{ThMainExpectation}} The proof will be divided into a series of lemmas and propositions.
	Before we start  we introduce some notation. Let $h(\bR)$ belong to the unital  subalgebra generated by $\bR$. We define
	\begin{align}
		\eta^h_\bR(z)&=\sum_{k=0}^\infty\beta_{k+1}\left(h(\bR),\underbrace{\bR,\bR,\ldots,\bR}_{k}\right)z^k,\label{etahz}\\ 
		\eta^h_\bR(z,w)&=	\sum_{k,l\geq 0}\beta_{k+l+1}\left(\underbrace{\bR,\bR,...,\bR}_{l},h(\bR),\underbrace{\bR,\bR,...,\bR}_{k}\right)z^lw^k.\label{etahzw}
	\end{align}
	Both $\eta^h_\bR(z)$ and $\eta^h_\bR(z,w)$ are  well-defined near the origin. The function $\eta^h_\bR(z)$ was defined in \cite{szpojankowski2019conditional} and the authors showed a method of calculating this function when $h(\bR)$ is a holomorphic function of $\bR$. 
	
	The function $\eta^h_\bR(z,w)$ generalizes the function $\eta^h_\bR(z)$ in the sense that $\eta^h_\bR(z,0)=\eta^h_\bR(z)$. Since Boolean cumulants are invariant under reflections i.e. $\beta_n(\bX_1,\bX_2,\ldots,\bX_n)=\beta_n(\bX_n,\ldots,\bX_2,\bX_1)$ then it is clear that $\eta^h_\bR(z,w)=\eta^h_\bR(w,z)$. Lemma \ref{lematoetahzw} shows that $\eta^h_\bR(z,w)$ can be expressed in terms of $\eta_\bR^h(z)$, furthermore Lemma \ref{wzorynaety} shows that
	\begin{eqnarray*}
		\eta^h_\bR(z)&=&\frac{\varphi\left(h(\bR)(\bI-z\bR)^{-1}\right)}{\varphi\left((\bI-z\bR)^{-1}\right)},\\
		\eta^h_\bR(z,w)&=&\frac{\varphi\left(h(\bR)(\bI-z\bR)^{-1}(\bI-w\bR)^{-1}\right)}{\varphi\left((\bI-z\bR)^{-1}\right)\varphi\left((\bI-w\bR)^{-1}\right)},
	\end{eqnarray*}
	which allows us to extend both functions analytically for  $z,w\in\C\setminus\R_+$ and provides a way of calculating them at least when $h(\bR)$ is some nice function of $\bR$.
	
	We will also use the following notation for Boolean cumulants that will  frequently appear in the proof:
	\begin{eqnarray*}
		\beta_n(\bY)&=&\beta_n(\bY,\bY,...,\bY),\\
		y_n&=&\beta_{2n+1}\left(\bY,\bR,\bY,\ldots,\bR,\bY\right),\\
		r_n&=&\beta_{2n+1}\left(\bR,\bY,\bR,\ldots,\bY,\bR\right),\\
		s^{h}_{m,n}&=&\beta_{2m+2n+3}(\underbrace{\bY,\bR,\bY,...,\bY}_{2m+1},h(\bR),\underbrace{\bY,\bR,\bY,...,\bY}_{2n+1}),\\	t^{h}_{m,n}&=&\beta_{2m+2n+1}(\underbrace{\bR,\bY,...,\bR,\bY}_{2m},h(\bR),\underbrace{\bY,\bR,...,\bY,\bR}_{2n}).
	\end{eqnarray*}
	For  technical reasons it will be easier to work with
	\begin{equation*}
		f(z,w)=	\varphi\left(g_1(\bR)z\bU(\bI-z\bU)^{-1}g_2(\bR)w\bU(\bI-w\bU)^{-1}\right).\label{Defoff}
	\end{equation*} 
	Since  $(\bI-z\bU)^{-1}=\bI+z\bU(\bI-z\bU)^{-1}$, the difference $k(z,w)-f(z,w)$ is equal
	$$\varphi\left(g_1(\bR)g_2(\bR)\right)+\varphi\left(g_1(\bR)z\bU(\bI-z\bU)^{-1}g_2(\bR)\right)+\varphi\left(g_1(\bR)g_2(\bR)w\bU(\bI-w\bU)^{-1}\right)$$
	and can be easily calculated using \eqref{conditionalsubordination}. See the proof of Lemma \ref{lemmaf1} for more details.\newline
	For $z, w$ is some neighborhood of $0$ in $\C$ we can write
	\begin{equation*}
		f(z,w)=\sum_{m,n\geq1}\varphi\left(g_1(\bR)\bU^m g_2(\bR)\bU^n\right)z^mw^n.
	\end{equation*}
	For any $m,n\geq 1$ by traciality of $\varphi$ we have
	\begin{eqnarray*}
		\varphi\left(g_1(\bR)\bU^m g_2(\bR)\bU^n\right)&=&\varphi\left(g_1(\bR)\left(\bR^{\frac{1}{2}}\bY\bR^{\frac{1}{2}}\right)^mg_2(\bR)\left(\bR^{\frac{1}{2}}\bY\bR^{\frac{1}{2}}\right)^n\right)\\
		&=&\varphi\left(\underbrace{\bY\bR\bY\cdots\bR\bY}_{2m-1}h_2(\bR)\underbrace{\bY\bR\bY\cdots\bR\bY}_{2n-1}h_1(\bR)\right),
	\end{eqnarray*}
	where  $h_k(\bR)=\bR g_k(\bR)$ for $k=1,2$.
	Therefore $$f(z,w)=\sum_{m,n\geq1}\varphi\left(\underbrace{\bY\bR\bY\cdots\bR\bY}_{2m-1}h_2(\bR)\underbrace{\bY\bR\bY\cdots\bR\bY}_{2n-1}h_1(\bR)\right)z^mw^n.$$
	To evaluate $f(z,w)$ we apply formula \eqref{boolmain1} for the following two free collections of random variables
	\begin{equation*}
		\{\underbrace{\bR,\bR,...,\bR}_{m-1},h_2(\bR),\underbrace{\bR,\bR,...,\bR}_{n-1},h_1(\bR)\}\ \mbox{and}\ \{\underbrace{\bY,\bY,..,\bY}_{m+n}\}.
	\end{equation*}
	In order to apply this formula we have to consider two cases whether the sequence $1\leq j_1<\ldots <j_{k+1}=m+n$ appearing in  formula \eqref{boolmain1} contains $m$ or not. (This is due to the fact that $m$'th element of the first collection is $h_2(\bR)$.) Hence
	$$\varphi\left(\underbrace{\bY\bR\bY\cdots\bR\bY}_{2m-1}h_2(\bR)\underbrace{\bY\bR\bY\cdots\bR\bY}_{2n-1}h_1(\bR)\right)=S_{m,n}^{(1)}+S_{m,n}^{(2)},$$
	where  $S_{m,n}^{(1)}$ is a sum over all sequences $1\leq j_1<\ldots <j_{k+1}=m+n$ containing $m$ and $S_{m,n}^{(2)}$ contains all other elements.
	
	Explicitly we have
	\begin{equation}
		S_{m,n}^{(1)}=
		\sum_{l=0}^{m-1}\sum_{k=0}^{n-1}\varphi\left(h_1(\bR)h_2(\bR)\bR^{k+l}\right)
		\sum_{\substack{m_0+\cdots+m_l=m-1-l\\n_0+\cdots+n_k=n-1-k}}y_{m_0}y_{m_1}\cdots y_{m_l}\cdot y_{n_0}y_{n_1}\cdots y_{n_k},\label{s1mn}
	\end{equation}
	\begin{equation}
		S_{m,n}^{(2)}=
		\sum_{l=0}^{m-1}\sum_{k=0}^{n-1}\varphi\left(h_1(\bR)\bR^{k+l}\right)\sum_{\substack{m_0+\cdots+m_l=m-1-l\\n_0+\cdots+n_k=n-1-k}}y_{m_0}y_{m_1}\cdots y_{m_{l-1}}\cdot s^{h_2}_{m_l,n_0}\cdot y_{n_1}\cdots y_{n_k}.\label{s2mn}
	\end{equation}
	The indices $m_0,..,m_l,n_0,...,n_k$ appearing in the most inner sums are assumed to be non-negative integers.
	
	We can now  write $f(z,w)=\sum_{m,n\geq 1}S^{(1)}_{m,n}z^mw^n+\sum_{m,n\geq 1}S^{(2)}_{m,n}z^mw^n$. In the  next two lemmas we calculate both series.
	\begin{lemma}\label{lemmaf1}
		For $z,w$ in some neighborhood of $0$ in $\C$ the sum $$f_1(z,w)=\sum_{m,n\geq 1}S^{(1)}_{m,n}z^mw^n$$ is equal
		\begin{equation}
			f_1(z,w)=\od\odw\varphi\left(h_1(\bR)h_2(\bR)(\bI-\od\bR)^{-1}(\bI-\odw\bR)^{-1}\right).\label{f1formula}
		\end{equation}
		As a result  the function $k_1(z,w):=k(z,w)-f(z,w)+f_1(z,w)$ is equal
		\begin{equation}
			k_1(z,w)=\varphi\left(g_1(\bR)g_2(\bR)(\bI-\od\bR)^{-1}(\bI-\odw\bR)^{-1}\right).\label{k1formula}
		\end{equation}
		
	\end{lemma}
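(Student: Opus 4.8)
The plan is to read the double series defining $f_1(z,w)$ as a product of two convolution series, identify each convolution series with a power of the subordination function $\od$ by means of the Boolean-cumulant expansion \eqref{rowzw2}, and then recombine everything using resolvent (partial-fraction) identities. First I would observe that \eqref{rowzw2}, applied to the pair $(\bR,\bY)$ (so that the subordination function is the $\od$ appearing in Theorem \ref{ThMainExpectation}), gives $\od=\sum_{n\geq1}\beta_{2n-1}(\bY,\bR,\ldots,\bR,\bY)z^n=\sum_{n\geq1}y_{n-1}z^n=z\,Y(z)$, where $Y(z):=\sum_{j\geq0}y_jz^j$; hence $\od^{\,l+1}=z^{l+1}Y(z)^{l+1}$. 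The inner sum $\sum_{m_0+\cdots+m_l=m-1-l}y_{m_0}\cdots y_{m_l}$ in \eqref{s1mn} is precisely the coefficient of $z^{m-1-l}$ in $Y(z)^{l+1}$, so summing the corresponding $z^m$-contributions over all $m\geq l+1$ produces exactly $\od^{\,l+1}$; the same computation with $w$ and $Y(w)$ produces $\odw^{\,k+1}$. Substituting into \eqref{s1mn} collapses $f_1(z,w)$ to $\sum_{l,k\geq0}\varphi\!\left(h_1(\bR)h_2(\bR)\bR^{k+l}\right)\od^{\,l+1}\odw^{\,k+1}$; pulling out $\od\odw$ and summing the two geometric series $\sum_l(\od\bR)^l$ and $\sum_k(\odw\bR)^k$ inside $\varphi$ (legitimate for $z,w$ near $0$, where $\od,\odw$ are small and $\bR$ is bounded) yields \eqref{f1formula}.

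For \eqref{k1formula} I would start from the identity $k(z,w)-f(z,w)=\varphi(g_1(\bR)g_2(\bR))+\varphi\!\left(g_1(\bR)z\bU(\bI-z\bU)^{-1}g_2(\bR)\right)+\varphi\!\left(g_1(\bR)g_2(\bR)w\bU(\bI-w\bU)^{-1}\right)$ already noted before the lemma (it comes from $(\bI-z\bU)^{-1}=\bI+z\bU(\bI-z\bU)^{-1}$). In each of the last two terms I would use $z\bU(\bI-z\bU)^{-1}=(\bI-z\bU)^{-1}-\bI$, bring $g_1(\bR)$ and $g_2(\bR)$ together by traciality, and then apply the conditional subordination \eqref{conditionalsubordination} together with the $\bR$-bimodule property of $\varphi(\,\cdot\mid\bR)$ to get $\varphi\!\left(g_1(\bR)g_2(\bR)(\bI-z\bU)^{-1}\right)=\varphi\!\left(g_1(\bR)g_2(\bR)(\bI-\od\bR)^{-1}\right)$, and likewise for $w$. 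This gives $k-f=\varphi\!\left(g_1(\bR)g_2(\bR)(\bI-\od\bR)^{-1}\right)+\varphi\!\left(g_1(\bR)g_2(\bR)(\bI-\odw\bR)^{-1}\right)-\varphi\!\left(g_1(\bR)g_2(\bR)\right)$. On the other side, since $h_j(\bR)=\bR g_j(\bR)$, the right-hand side of \eqref{f1formula} equals $\varphi\!\left(g_1(\bR)g_2(\bR)(\od\bR)(\odw\bR)(\bI-\od\bR)^{-1}(\bI-\odw\bR)^{-1}\right)$; expanding $(\od\bR)(\bI-\od\bR)^{-1}=(\bI-\od\bR)^{-1}-\bI$ in each factor rewrites $f_1$ as $\varphi\!\left(g_1(\bR)g_2(\bR)(\bI-\od\bR)^{-1}(\bI-\odw\bR)^{-1}\right)-\varphi\!\left(g_1(\bR)g_2(\bR)(\bI-\od\bR)^{-1}\right)-\varphi\!\left(g_1(\bR)g_2(\bR)(\bI-\odw\bR)^{-1}\right)+\varphi\!\left(g_1(\bR)g_2(\bR)\right)$. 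Adding $k-f$ and $f_1$, all terms cancel except $\varphi\!\left(g_1(\bR)g_2(\bR)(\bI-\od\bR)^{-1}(\bI-\odw\bR)^{-1}\right)$, which is \eqref{k1formula}.

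I expect the only genuinely delicate point to be the bookkeeping in the first paragraph: one must check that for $z,w$ in a small enough neighborhood of $0$ all the series involved — the convolution series, the two geometric series in $\bR$, and the outer double sum over $k,l$ — converge absolutely, so that the rearrangements and the interchange of $\varphi$ with the sums are justified. This is not a conceptual obstacle: it follows from $\od,\odw\to0$ as $z,w\to0$ (analyticity of the subordination functions near the origin) together with boundedness of $\bR$. Once that is in place, the rest is just the two resolvent identities $z\bU(\bI-z\bU)^{-1}=(\bI-z\bU)^{-1}-\bI$ and $(\od\bR)(\bI-\od\bR)^{-1}=(\bI-\od\bR)^{-1}-\bI$, applied twice each, combined with traciality and the module property of $\varphi(\,\cdot\mid\bR)$.
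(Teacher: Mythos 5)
Your proposal is correct and follows essentially the same route as the paper: the resummation of \eqref{s1mn} via the generating function $\sum_j y_j z^j=\od/z$ from \eqref{rowzw2}, the collapse to $\od\odw\sum_{k,l}\varphi\left(h_1(\bR)h_2(\bR)\bR^{k+l}\right)\od^l\odw^k$ and geometric series, and then the computation of $k-f+f_1$ using \eqref{conditionalsubordination} together with the resolvent identities. The paper's proof of the second part is just a more compressed version of your cancellation argument, so there is nothing to add.
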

	\begin{proof}
		Using \eqref{s1mn}, changing variables and the order of summation several times yields
		\begin{eqnarray*}
			&&f_1(z,w)=\sum_{m,n\geq1}S^{(1)}_{mn}z^mw^n\\
			&=&\sum_{m,n\geq1}z^mw^n\sum_{l=0}^{m-1}\sum_{k=0}^{n-1}\varphi\left(h_1(\bR)h_2(\bR)\bR^{k+l}\right)
			\sum_{\substack{m_0+\cdots+m_l=m-1-l\\n_0+\cdots+n_k=n-1-k}}y_{m_0}y_{m_1}\cdots y_{m_l}\cdot y_{n_0}y_{n_1}\cdots y_{n_k}\\
			&=&\sum_{k,l\geq 0}\varphi\left(h_1(\bR)h_2(\bR)\bR^{k+l}\right)\sum_{\substack{m\geq l+1\\n\geq k+1}}z^mw^n\sum_{\substack{m_0+\cdots+m_l=m-1-l\\n_0+\cdots+n_k=n-1-k}}y_{m_0}y_{m_1}\cdots y_{m_l}\cdot y_{n_0}y_{n_1}\cdots y_{n_k}\\
			&=&\sum_{k,l\geq 0}\varphi\left(h_1(\bR)h_2(\bR)\bR^{k+l}\right)z^{l+1}w^{k+1}\sum_{\substack{m,n\geq 0}}z^mw^n
			\sum_{\substack{m_0+\cdots+m_l=m\\n_0+\cdots+n_k=n}}y_{m_0}y_{m_1}\cdots y_{m_l}\cdot y_{n_0}y_{n_1}\cdots y_{n_k}
		\end{eqnarray*}
		The inner sum i.e.
		$$\sum_{\substack{m,n\geq 0}}z^mw^n
		\sum_{\substack{m_0+\cdots+m_l=m\\n_0+\cdots+n_k=n}}y_{m_0}y_{m_1}\cdots y_{m_l}\cdot y_{n_0}y_{n_1}\cdots y_{n_k}$$
		is equal $B(z)^{l+1}B(w)^{k+1}$ where
		$$B(z)=\sum_{n=0}^\infty y_nz^n=\sum_{n=0}^\infty\beta_{2n+1}(\bY,\bR,\bY,...,\bR,\bY)z^n=\frac{\od}{z},$$
		where the last equality follows from \eqref{rowzw2}.   
		Hence \begin{eqnarray*}
			f_1(z,w)&=&\od\odw\sum_{k,l\geq 0}\varphi\left(h_1(\bR)h_2(\bR)\bR^{k+l}\right)\od^l\odw^k\\
			&=&\od\odw\varphi\left(h_1(\bR)h_2(\bR)(\bI-\od\bR)^{-1}(\bI-\odw\bR)^{-1}\right).
		\end{eqnarray*}
		This proves \eqref{f1formula}. 
		
		To prove \eqref{k1formula} note that  identity $(\bI-z\bU)^{-1}=\bI+z\bU(\bI-z\bU)^{-1}$ together with \eqref{conditionalsubordination} allows us to write  $k(z,w)-f(z,w)+f_1(z,w)$ as
		\begin{multline*}
			\varphi(g_1(\bR)g_2(\bR)\od\bR(\bI-\od\bR)^{-1})
			+\varphi(g_1(\bR)g_2(\bR)\odw\bR(\bI-\odw\bR)^{-1})\\
			+\varphi(g_1(\bR)g_2(\bR))+\od\odw\varphi\left(g_1(\bR)g_2(\bR)\bR^2(\bI-\od\bR)^{-1}(\bI-\odw\bR)^{-1}\right).\label{gminusf}
		\end{multline*}
		The last expression is equal exactly $k_1(z,w)$ from \eqref{k1formula}.
	\end{proof} 
	\begin{lemma}\label{lemmak2}
		For $z,w$ in some neighborhood of $0$ in $\C$ the sum  $k_2(z,w)=\sum_{m,n\geq 1}S^{(2)}_{m,n}z^mw^n$ is equal
		\begin{equation}
			k_2(z,w)=G(z,w)\varphi\left(h_1(\bR)(\bI-\od\bR)^{-1}(\bI-\odw\bR)^{-1}\right),\label{k2formula}
		\end{equation}
		where 
		\begin{equation*}
			G(z,w)=\sum_{m,n\geq 0}\beta_{2m+2n+3}\left(\underbrace{\bY,\bR,\bY,....,\bR,\bY}_{2m+1},h_2(\bR),\underbrace{\bY,\bR,\bY,....,\bR,\bY}_{2n+1}\right)z^{m+1}w^{n+1}.
		\end{equation*}
		
	\end{lemma}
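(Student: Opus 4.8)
The plan is to repeat, with the obvious modifications, the generating‑function computation from the proof of Lemma \ref{lemmaf1}. First I would substitute the explicit expression \eqref{s2mn} for $S^{(2)}_{m,n}$ into $k_2(z,w)=\sum_{m,n\geq 1}S^{(2)}_{m,n}z^mw^n$, move the summations over $l$ and $k$ to the outside (this is legitimate since all the series converge absolutely for $z,w$ in a small enough neighbourhood of $0$), and reindex $m\mapsto m-1-l$, $n\mapsto n-1-k$. This pulls out a factor $z^{l+1}w^{k+1}$ and turns the constraints in \eqref{s2mn} into $m_0+\cdots+m_l=m$ and $n_0+\cdots+n_k=n$ over new indices $m,n\geq 0$.

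The key step is then to recognise the remaining double series as a product of three generating functions. Summing freely over all nonnegative $m_0,\dots,m_l$ and $n_0,\dots,n_k$ with weight $z^{m_0+\cdots+m_l}w^{n_0+\cdots+n_k}$, the $l$ factors $y_{m_0},\dots,y_{m_{l-1}}$ contribute $(\od/z)^l$ and the $k$ factors $y_{n_1},\dots,y_{n_k}$ contribute $(\odw/w)^k$, exactly as in the proof of Lemma \ref{lemmaf1} via \eqref{rowzw2}; the remaining factor $s^{h_2}_{m_l,n_0}$ ties together one $z$‑variable and one $w$‑variable and therefore contributes $\sum_{p,q\geq0}s^{h_2}_{p,q}z^pw^q=G(z,w)/(zw)$ by the very definition of $G(z,w)$. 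Multiplying these and cancelling the powers of $z$ and $w$ against the prefactor $z^{l+1}w^{k+1}$ leaves
\begin{equation*}
k_2(z,w)=G(z,w)\sum_{k,l\geq0}\varphi\bigl(h_1(\bR)\bR^{k+l}\bigr)\od^{\,l}\odw^{\,k}.
\end{equation*}
Since $\bR$ generates a commutative subalgebra, $\sum_{k,l\geq0}\bR^{k+l}\od^{\,l}\odw^{\,k}=(\bI-\od\bR)^{-1}(\bI-\odw\bR)^{-1}$, and \eqref{k2formula} follows.

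The only place that needs care is the bookkeeping at the ends of the index ranges: when $l=0$ the product $y_{m_0}\cdots y_{m_{l-1}}$ is empty and the sole $m$‑index is the one feeding into $s^{h_2}$, and likewise when $k=0$; so I would check that the convolution identities (an empty product equals $1$, etc.) reproduce precisely these boundary contributions and that the case split encoded in \eqref{s2mn} is thereby respected. Apart from this, the manipulation is purely formal — once the three generating functions are correctly identified the powers of $z$ and $w$ telescope on their own and the resolvent expansion completes the argument.
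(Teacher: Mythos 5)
Your proposal is correct and follows essentially the same route as the paper: substitute \eqref{s2mn}, reindex to pull out $z^{l+1}w^{k+1}$, factor the inner sum as $B(z)^l\,\bigl(G(z,w)/(zw)\bigr)\,B(w)^k$ with $B(z)=\od/z$ from \eqref{rowzw2}, and resum the resolvent series. Your remark about the boundary cases $l=0$ or $k=0$ (empty $y$-products, the lone index feeding into $s^{h_2}$) is exactly the bookkeeping implicit in the paper's computation.
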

	\begin{proof}
		The proof of \eqref{k2formula} is similar to the proof of Lemma \ref{lemmaf1}. We apply formula \eqref{s2mn} to get
		\begin{eqnarray*}
			&&f_2(z,w)=\sum_{m,n\geq1}S^{(2)}_{mn}z^mw^n\\
			&=&\sum_{m,n\geq1}z^mw^n\sum_{l=0}^{m-1}\sum_{k=0}^{n-1}\varphi\left(h_1(\bR)\bR^{k+l}\right)\sum_{\substack{m_0+\cdots+m_l=m-1-l\\n_0+\cdots+n_k=n-1-k}}y_{m_0}y_{m_1}\cdots y_{m_{l-1}}\cdot s^{h_2}_{m_l,n_0}\cdot y_{n_1}\cdots y_{n_k}\\
			&=&	\sum_{l,k\geq 0}\varphi\left(h_1(\bR)\bR^{k+l}\right)\sum_{\substack{m\geq l+1\\n\geq k+1}}z^mw^n\sum_{\substack{m_0+\cdots+m_l=m-1-l\\n_0+\cdots+n_k=n-1-k}}y_{m_0}y_{m_1}\cdots y_{m_{l-1}}\cdot s^{h_2}_{m_l,n_0}\cdot y_{n_1}\cdots y_{n_k}\\
			&=&\sum_{l,k\geq 0}\varphi\left(h_1(\bR)\bR^{k+l}\right)z^{l+1}w^{k+1}\sum_{m,n\geq 0}z^mw^n\sum_{\substack{m_0+\cdots+m_l=m\\n_0+\cdots+n_k=n}}y_{m_0}y_{m_1}\cdots y_{m_{l-1}}\cdot s^{h_2}_{m_l,n_0}\cdot y_{n_1}\cdots y_{n_k}\\
			&=&\sum_{l,k\geq 0}\varphi\left(h_1(\bR)\bR^{k+l}\right)z^{l}w^{k}G(z,w)B(z)^l B(w)^k,
		\end{eqnarray*}
		where  as in the previous proof	$$B(z)=\sum_{n=0}^\infty y_nz^n=\sum_{n=0}^\infty\beta_{2n+1}(\bY,\bR,\bY,...,\bR,\bY)z^n=\frac{\od}{z}$$ and
		\begin{eqnarray*}
			G(z,w)&=&zw\sum_{m,n\geq 0}s^{h_2}_{m,n}z^mw^n\\
			&=&\sum_{m,n\geq 0}\beta_{2m+2n+3}\left(\underbrace{\bY,\bR,\bY,....,\bR,\bY}_{2m+1},h_2(\bR),\underbrace{\bY,\bR,\bY,....,\bR,\bY}_{2n+1}\right)z^{m+1}w^{n+1}.
		\end{eqnarray*}
		Hence \begin{eqnarray*}
			k_2(z,w)&=&G(z,w)\sum_{l,k\geq 0}\varphi\left(h_1(\bR)\bR^{k+l}\right)\od^l\odw^l\\
			&=&G(z,w)\varphi\left(h_1(\bR)(\bI-\od\bR)^{-1}(\bI-\odw\bR)^{-1}\right).	
		\end{eqnarray*}
	\end{proof}
	
	The next step is to find the  closed form for the series \begin{equation}
		G(z,w)=\sum_{m,n\geq 1}\beta_{2m+2n+1}\left(\underbrace{\bY,\bR,\bY,....,\bR,\bY}_{2m-1},h(\bR),\underbrace{\bY,\bR,\bY,....,\bR,\bY}_{2n-1}\right)z^{m}w^{n}\label{defofG}.
	\end{equation}
	As always $h(\bR)$ is a fixed  element in the unital subalgebra generated by $\bR$. It turns out that $G(z,w)$ is closely related to the following double series
	
	\begin{equation}
		H(z,w)=\sum_{m,n\geq 0}\beta_{2m+2n+1}\left(\underbrace{\bR,\bY,....,\bR,\bY}_{2m},h(\bR),\underbrace{\bY,\bR,\bY,....,\bR}_{2n}\right)z^mw^n\label{defofH}
	\end{equation}
	so we consider both at the same time.
	
	We start with the following lemma which shows  that $G(z,w)$ can be expressed in terms of $H(z,w)$ and the subordination functions.
	
	\begin{lemma}
		For $z,w$ in some neighborhood of $0$ in $\C$ such that $\oj\neq\ojw$ \label{LemmaGbyH}
		\begin{equation*}
			G(z,w)=H(z,w)\frac{w\od-z\odw}{\oj-\ojw}
		\end{equation*}
		where $\oj,\od$ are the subordination functions satisfying $M_{\bR\bY}(z)=\muz=M_{\bY}(\oj)=M_{\bR}(\od)$.
	\end{lemma}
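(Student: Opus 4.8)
The plan is to expand the defining Boolean cumulant of $G(z,w)$ by means of the product-of-entries formula \eqref{boolmain3}. In the word
$\underbrace{\bY,\bR,\ldots,\bR,\bY}_{2m-1},\,h(\bR),\,\underbrace{\bY,\bR,\ldots,\bR,\bY}_{2n-1}$
the odd positions carry the $m+n$ copies of $\bY$ and the even positions carry the $m+n-1$ elements of the algebra generated by $\bR$, with $h(\bR)$ sitting at position $2m$. Applying \eqref{boolmain3} to the two free families $\{\bY,\bY,\ldots\}$ (playing the role of the ``$\bX$''-entries there) and $\{\bR,\ldots,\bR,h(\bR),\bR,\ldots\}$ (the ``$\bY$''-entries there) expresses this cumulant as a sum over an integer $k\ge 2$ and a chain $1=j_1<\cdots<j_k=m+n$ of a factor $\beta_k(\underbrace{\bY,\ldots,\bY}_{k})=\beta_k(\bY)$ times a product of $k-1$ ``inner'' Boolean cumulants, the $l$-th of which is the alternating sub-word sitting strictly between the $j_l$-th and the $j_{l+1}$-th copy of $\bY$; each inner cumulant begins and ends with an $\bR$-algebra entry and has length $2(j_{l+1}-j_l)-1$.

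The first observation is that $h(\bR)$ lies in exactly one inner cumulant, the $l^\ast$-th one, characterised by $j_{l^\ast}\le m<j_{l^\ast+1}$. Writing $d_l=j_{l+1}-j_l\ge 1$, $p=m-j_{l^\ast}\ge 0$, $q=j_{l^\ast+1}-1-m\ge 0$ (so $d_{l^\ast}=p+q+1$), that inner cumulant equals $\beta_{2p+2q+1}(\underbrace{\bR,\bY,\ldots,\bR,\bY}_{2p},h(\bR),\underbrace{\bY,\bR,\ldots,\bR}_{2q})$, i.e.\ precisely the coefficient of $z^p w^q$ in $H(z,w)$ from \eqref{defofH}; every other inner cumulant is a plain alternating word $\beta_{2d_l-1}(\bR,\bY,\ldots,\bR)=r_{d_l-1}$. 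Since $m=1+p+\sum_{l<l^\ast}d_l$ and $n=1+q+\sum_{l>l^\ast}d_l$, the monomial $z^mw^n$ splits along the chain, and I would then sum each $d_l$ freely over $d_l\ge 1$: by \eqref{rowzw1} every gap to the left of $l^\ast$ contributes $\sum_{d\ge1}r_{d-1}z^{d}=\omega_1(z)$ and every gap to the right contributes $\omega_1(w)$, while the block at $l^\ast$ contributes $zw\,H(z,w)$. Collecting everything,
\[
G(z,w)=zw\,H(z,w)\sum_{k\ge 2}\beta_k(\bY)\sum_{l^\ast=1}^{k-1}\omega_1(z)^{l^\ast-1}\,\omega_1(w)^{k-1-l^\ast}.
\]

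Finally I would evaluate the remaining sums. The inner sum over $l^\ast$ is the geometric identity $\sum_{l^\ast=1}^{k-1}\omega_1(z)^{l^\ast-1}\omega_1(w)^{k-1-l^\ast}=\bigl(\omega_1(z)^{k-1}-\omega_1(w)^{k-1}\bigr)/\bigl(\omega_1(z)-\omega_1(w)\bigr)$, legitimate for $z,w$ near $0$ with $z\ne w$ (hence $\omega_1(z)\ne\omega_1(w)$). Thus $G(z,w)=\dfrac{zw\,H(z,w)}{\omega_1(z)-\omega_1(w)}\bigl(\Phi(\omega_1(z))-\Phi(\omega_1(w))\bigr)$, where $\Phi(\zeta):=\sum_{k\ge 2}\beta_k(\bY)\zeta^{k-1}=\zeta^{-1}\eta_{\bY}(\zeta)-\varphi(\bY)$ by the Boolean-cumulant expansion of $\eta_{\bY}$. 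Now the subordination relation $M_{\bR\bY}(z)=M_{\bY}(\omega_1(z))$ from \eqref{subordiantion} gives $\eta_{\bY}(\omega_1(z))=\eta_{\bR\bY}(z)$, and \eqref{usefulidentity} gives $\eta_{\bR\bY}(z)=\omega_1(z)\omega_2(z)/z$, so $\Phi(\omega_1(z))=\omega_2(z)/z-\varphi(\bY)$; the constants cancel in the difference, leaving $\Phi(\omega_1(z))-\Phi(\omega_1(w))=\frac{\omega_2(z)}{z}-\frac{\omega_2(w)}{w}=\frac{w\omega_2(z)-z\omega_2(w)}{zw}$. Substituting produces $G(z,w)=H(z,w)\,\dfrac{w\omega_2(z)-z\omega_2(w)}{\omega_1(z)-\omega_1(w)}$, which is the assertion.

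The step I expect to be the main obstacle is the combinatorial bookkeeping of the second paragraph: one must verify that assigning to a pair $(m,n)$ together with a term of the expansion \eqref{boolmain3} the tuple $(k,l^\ast,d_1,\ldots,d_{k-1},p,q)$ — subject exactly to $k\ge 2$, $1\le l^\ast\le k-1$, $d_l\ge 1$, $p,q\ge 0$, $d_{l^\ast}=p+q+1$ — is a bijection, so that the double series $\sum_{m,n\ge1}z^mw^n(\cdots)$ genuinely factors into the product of the three geometric pieces (powers of $\omega_1(z)$, powers of $\omega_1(w)$, and the $H$-block). Once that is settled, the remaining manipulations are routine rearrangements of power series that converge in a neighbourhood of the origin, and the only analytic inputs are the already-recorded identities \eqref{subordiantion} and \eqref{usefulidentity}.
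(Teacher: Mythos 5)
Your proposal is correct and follows essentially the same route as the paper's proof: the same application of \eqref{boolmain3} to the families $\{\bY,\ldots,\bY\}$ and $\{\bR,\ldots,h(\bR),\ldots,\bR\}$, the same identification of the single inner cumulant containing $h(\bR)$ as the $H$-block and of the remaining gaps as geometric series summing to $\omega_1(z)$ and $\omega_1(w)$, and the same final simplification via $\eta_{\bY}(\omega_1(z))=\eta_{\bR\bY}(z)$ and \eqref{usefulidentity}. Your reindexing by the total $k$ and the position $l^\ast$, and the function $\Phi$, are only cosmetic repackagings of the paper's double sum $\sum_{l,k\ge1}\beta_{k+l}(\bY)\omega_1(z)^{l-1}\omega_1(w)^{k-1}$.
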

	\begin{proof}
		We start by applying formula \eqref{boolmain3} to  the following free collections of random variables
		\begin{equation*}
			\{\underbrace{\bY,\bY,..,\bY}_{m+n}\}	\ \mbox{and}\ \{\underbrace{\bR,\bR,...,\bR}_{m-1},h(\bR),\underbrace{\bR,\bR,...,\bR}_{n-1}\}.
		\end{equation*}
		Thus for  $m,n\geq 1$ we obtain
		\begin{multline*}
			\beta_{2m+2n-1}(\underbrace{\bY,\bR,\bY,....,\bR,\bY}_{2m-1},h(\bR),\underbrace{\bY,\bR,\bY,....,\bR,\bY}_{2n-1})\\
			=\sum_{l=1}^{m}\sum_{k=1}^{n}\beta_{k+l}(\bY)\sum_{\substack{m_1+m_2+...+m_l=m-l\\n_1+n_2+...+n_k=n-k}}r_{m_1}\cdots r_{m_{l-1}}\cdot t^h_{m_l,n_1}\cdot r_{n_2}\cdots r_{n_l}
		\end{multline*}
		where appropriate  products of $r$'s are assumed to be $1$ if either $k=1$ or $l=1$. Application of the above formula and standard power series manipulations yield
		\begin{eqnarray*}
			&&G(z,w)=\\
			&=&\sum_{m,n\geq 1}z^mw^n\sum_{l=1}^{m}\sum_{k=1}^{n}\beta_{k+l}(\bY)\sum_{\substack{m_1+m_2+...+m_l=m-l\\n_1+n_2+...+n_k=n-k}}r_{m_1}\cdots r_{m_{l-1}}\cdot t^h_{m_l,n_1}\cdot r_{n_2}\cdots r_{n_l}\\
			&=&\sum_{k,l\geq 1}\beta_{k+l}(\bY)\sum_{\substack{m=l\\n=k}}^\infty\left(\sum_{\substack{m_1+m_2+...+m_l=m-l\\n_1+n_2+...+n_k=n-k}}r_{m_1}\cdots r_{m_{l-1}}\cdot t^h_{m_l,n_1}\cdot r_{n_2}\cdots r_{n_l}\right)z^mw^n\\
			&=&\sum_{k,l\geq 1}\beta_{k+l}(\bY)z^lw^k\sum_{m,n\geq0}^\infty\left(\sum_{\substack{m_1+m_2+...+m_l=m\\n_1+n_2+...+n_k=n}}r_{m_1}\cdots r_{m_{l-1}}\cdot t^h_{m_l,n_1}\cdot r_{n_2}\cdots r_{n_l}\right)z^mw^n.
		\end{eqnarray*}
		Note that the inner "double" sum  equals
		$$A(z)^{l-1}A(w)^{k-1}H(z,w),$$ where 	by \eqref{rowzw1}
		$$A(z)=\sum_{n=0}^\infty r_nz^n=\sum_{n=0}^\infty\beta_{2n+1}(\bR,\bY,\bR,...,\bY,\bR)z^n=\frac{\oj}{z}$$
		and $H(z,w)=\sum_{m,n\geq0}t^{h}_{m,n}z^mw^n$ which is \eqref{defofH}. Hence
		\begin{eqnarray*}
			G(z,w)&=&	\sum_{l=1}^{\infty}\sum_{k=1}^{\infty}\beta_{k+l}(\bY)z^lw^kA(z)^{l-1}A(w)^{k-1}H(z,w)\\
			&=&zwH(z,w)\sum_{l=1}^{\infty}\sum_{k=1}^{\infty}\beta_{k+l}(\bY)\oj^{l-1}\omega_1(w)^{k-1}.
		\end{eqnarray*} 
		Now if $\oj\neq\ojw$ then 
		$$\sum_{l=1}^{\infty}\sum_{k=1}^{\infty}\beta_{k+l}(\bY)\oj^{k-1}\oj^{l-1}=\sum_{n=0}^\infty\beta_{n+2}(\bY)\frac{\oj^{n+1}-\omega_1(w)^{n+1}}{\oj-\omega_1(w)}.$$
		The rest is straightforward:
		$$\sum_{n=0}^\infty\beta_{n+2}(\bY)z^{n+1}=\frac{\sum_{n=2}^\infty\beta_n(\bY)z^n}{z}=\frac{\eta_\bY(z)}{z}-\varphi(\bY).$$
		Hence $$G(z,w)=zwH(z,w)\frac{\omega_1(w)\eta_\bY(\oj)-\oj\eta_\bY(\omega_1(w))}{\oj\omega_1(w)(\oj-\omega_1(w))}.$$
		To finish the proof we apply formula \eqref{usefulidentity} that can be written as  $\oj\od=\eta_\bY(\oj)z$ to get
		$$zw\left[\omega_1(w)\eta_\bY(\oj)-\oj\eta_\bY(\omega_1(w))\right]=\oj\ojw\left(w\od-z\odw\right).$$
	\end{proof}
	We will show now that similar calculation yields another formula that links $H(z,w)$ with $G(z,w)$. Before we state the lemma we will express
	\begin{eqnarray*}
		H(z,w)=\sum_{m,n\geq 	0}\beta_{2m+2n+1}\left(\underbrace{\bR,\bY,....,\bR,\bY}_{2m},h(\bR),\underbrace{\bY,\bR,\bY,....,\bR}_{2n}\right)z^mw^n,
	\end{eqnarray*}
	as $H(z,w)=H_1(z,w)+C(z)+C(w)-\varphi(h(\bR))$ 
	where
	\begin{eqnarray*}
		H_1(z,w)&=&\sum_{m,n\geq 1}\beta_{2m+2n+1}\left(\underbrace{\bR,\bY,....,\bR,\bY}_{2m},h(\bR),\underbrace{\bY,\bR,\bY,....,\bR}_{2n}\right)z^mw^n,\\
		C(z)&=&\sum_{m\geq0}\beta_{2m+1}\left(h(\bR),\underbrace{\bY,\bR,\bY,....,\bR}_{2m}\right)z^m.
	\end{eqnarray*}
	We do this because  it is easier to work with $H_1(z,w)$ rather than $H(z,w)$. The reason is that  $h(\bR)$ is always the middle term in Boolean cumulant appearing in $H_1(z,w)$ while it is not always the case for $H(z,w)$. The function $C(z)$ is equal
	\begin{equation}
		C(z)=\eta^h_\bR(\od),\label{Cofz}
	\end{equation} where $\eta_\bR^h(z)$ is given by the series form formula \eqref{etahz}. This was shown in \cite{szpojankowski2019conditional}   using \eqref{boolmain3} so we skip the details.
	\begin{lemma}\label{LemmaHbyG}
		For $z,w$ in some neighborhood of $0$ in $\C$ such that $\od\neq\odw$
		\begin{equation*}
			H(z,w)=\frac{G(z,w)}{zw}\frac{w\oj-z\ojw}{\od-\odw}+\eta^h_\bR(\od,\odw),
		\end{equation*}
		where $\eta^h_\bR(z,w)$ is given by formula \eqref{etahzw}.
	\end{lemma}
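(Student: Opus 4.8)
The plan is to follow the proof of Lemma~\ref{LemmaGbyH}, applying formula \eqref{boolmain3} to the Boolean cumulant
$$\beta_{2m+2n+1}\big(\underbrace{\bR,\bY,\ldots,\bR,\bY}_{2m},h(\bR),\underbrace{\bY,\bR,\bY,\ldots,\bR}_{2n}\big)$$
occurring in $H_1(z,w)$, but now with the roles of $\bR$ and $\bY$ interchanged: the relevant free collections are $\{\underbrace{\bR,\ldots,\bR}_{m},h(\bR),\underbrace{\bR,\ldots,\bR}_{n}\}$ (the odd-position entries, playing the role of $\bX_1,\ldots,\bX_{m+n+1}$) and $\{\underbrace{\bY,\ldots,\bY}_{m+n}\}$ (the even-position entries, playing the role of $\bY_1,\ldots,\bY_{m+n}$). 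Since $h(\bR)$ now sits among the outer entries, the sum produced by \eqref{boolmain3} splits into two parts according to whether $h(\bR)$ is selected into the top cumulant $\beta_k(\bX_{j_1},\ldots,\bX_{j_k})$ or not; I would write $H_1=H_1^{A}+H_1^{B}$ accordingly, and recall from the preamble that $H(z,w)=H_1(z,w)+C(z)+C(w)-\varphi(h(\bR))$.

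In the first case, since $j_1=1$ and $j_k=m+n+1$ are forced and are $\bR$-positions, the top cumulant has the form $\beta_{p+q+1}\big(\underbrace{\bR,\ldots,\bR}_{p},h(\bR),\underbrace{\bR,\ldots,\bR}_{q}\big)$ with $p,q\ge1$, and every inner cumulant is of the form $\beta_{2d-1}(\bY,\bR,\ldots,\bR,\bY)$, i.e.\ a $y$-cumulant. Running the power-series rearrangement used in the proofs of Lemmas~\ref{lemmaf1} and~\ref{lemmak2} (with $\sum_{n\ge0}y_n z^n=\od/z$, so that each product of $y$'s becomes a power of $\od$) gives
$$H_1^{A}(z,w)=\sum_{p,q\ge 1}\beta_{p+q+1}\big(\underbrace{\bR,\ldots,\bR}_{p},h(\bR),\underbrace{\bR,\ldots,\bR}_{q}\big)\,\od^{\,p}\,\odw^{\,q}.$$
Using the identity $C(z)=\eta^h_\bR(\od)$ from \eqref{Cofz} expanded via \eqref{etahz} (and its analogue for $C(w)$), together with the series \eqref{etahzw} for $\eta^h_\bR(z,w)$, I would split the double sum defining $\eta^h_\bR(\od,\odw)$ into its terms with both indices $\ge1$ (which reproduce $H_1^{A}$), its terms with one index equal to $0$ (which reproduce $C(z)$ and $C(w)$ after using reflection invariance of $\beta$), and the all-zero term (which is $\varphi(h(\bR))$, counted twice); this yields $H_1^{A}(z,w)+C(z)+C(w)-\varphi(h(\bR))=\eta^h_\bR(\od,\odw)$.

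In the second case the top cumulant is $\beta_k(\bR,\ldots,\bR)=\beta_k(\bR)$, exactly one inner cumulant contains $h(\bR)$ and is therefore an $s^h$-cumulant, and the other inner cumulants are $y$-cumulants; this is precisely the situation of the proof of Lemma~\ref{LemmaGbyH} with $\bR$ and $\bY$ interchanged. The same bookkeeping (swapping the order of summation, substituting, and using $\sum_{n\ge0}y_n z^n=\od/z$) gives
$$H_1^{B}(z,w)=\Big(\sum_{m,n\ge0}s^h_{m,n}\,z^m w^n\Big)\sum_{l,k\ge 1}\beta_{k+l}(\bR)\,z^l w^k\Big(\tfrac{\od}{z}\Big)^{l-1}\Big(\tfrac{\odw}{w}\Big)^{k-1}.$$
The first factor is $G(z,w)/(zw)$ by the definition of $G$ used in Lemma~\ref{lemmak2}. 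For the second factor I would compute, using $\od\neq\odw$,
$$\sum_{l,k\ge 1}\beta_{k+l}(\bR)\,\od^{\,l-1}\,\odw^{\,k-1}=\sum_{n\ge0}\beta_{n+2}(\bR)\,\frac{\od^{\,n+1}-\odw^{\,n+1}}{\od-\odw}=\frac{\odw\,\eta_\bR(\od)-\od\,\eta_\bR(\odw)}{\od\,\odw\,(\od-\odw)},$$
using $\sum_{n\ge0}\beta_{n+2}(\bR)\,z^{n+1}=\eta_\bR(z)/z-\varphi(\bR)$; then the identity $\oj\od=z\,\eta_{\bR\bY}(z)=z\,\eta_\bR(\od)$, which follows from \eqref{usefulidentity} and \eqref{subordiantion}, turns $\odw\,\eta_\bR(\od)-\od\,\eta_\bR(\odw)$ into $\od\,\odw\,(w\oj-z\ojw)/(zw)$, so the second factor collapses to $(w\oj-z\ojw)/(\od-\odw)$. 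Hence $H_1^{B}(z,w)=\dfrac{G(z,w)}{zw}\cdot\dfrac{w\oj-z\ojw}{\od-\odw}$, and adding $H=H_1^{A}+H_1^{B}+C(z)+C(w)-\varphi(h(\bR))$ gives the assertion.

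The step I expect to be most delicate is the first case: once $h(\bR)$ is allowed to be absorbed into the top cumulant, the combinatorics of \eqref{boolmain3} is a little more involved than in Lemma~\ref{LemmaGbyH}, and one must verify carefully that the contributions coming from $C(z)$, $C(w)$ and the overlap term reassemble exactly into the two-variable function $\eta^h_\bR$. The second case is essentially a transcription of the proof of Lemma~\ref{LemmaGbyH} with $\bR$ and $\bY$ (and hence $\omega_1$ and $\omega_2$) exchanged, and should present no new difficulty.
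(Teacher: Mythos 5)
Your proposal is correct and follows essentially the same route as the paper: the same application of \eqref{boolmain3} to $t^h_{m,n}$ with the case split according to whether $h(\bR)$ enters the top cumulant (your $H_1^A$, $H_1^B$ are exactly the paper's $E_{m,n}$, $F_{m,n}$), the same power-series resummation via $\sum_n y_n z^n=\od/z$, the identity $\oj\od=z\eta_{\bU}(z)=z\eta_\bR(\od)$, and the reassembly of $E+C(z)+C(w)-\varphi(h(\bR))$ into $\eta^h_\bR(\od,\odw)$. You in fact carry out explicitly the computations (of $F(z,w)$ and of the recombination into $\eta^h_\bR(\od,\odw)$) that the paper states and skips as "similar to Lemma \ref{LemmaGbyH}", and these check out.
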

	\begin{proof}
		We start by applying formula \eqref{boolmain3} to $t^h_{m,n}$ with $m,n\geq 1$ to get
		\begin{equation*}
			t^h_{m,n}=\beta_{2m+2n+1}\left(\underbrace{\bR,\bY,....,\bR,\bY}_{2m},h(\bR),\underbrace{\bY,\bR,\bY,....,\bR}_{2n}\right)=E_{m,n}+F_{m,n}
		\end{equation*}
		where
		\begin{eqnarray*}
			E_{m,n}&=&\sum_{\substack{1\leq l\leq m\\1\leq k\leq n}}\beta_{k+l+1}\left(\underbrace{\bR,...,\bR}_{l},h(\bR),\underbrace{\bR,...,\bR}_{k}\right)\
			\sum_{\substack{m_1+\cdots+m_l=m-l\\n_1+\cdots+n_k=n-k}}y_{m_1}\cdots y_{m_l}\cdot y_{n_1}\cdots y_{m_k},\\
			F_{m,m}&=&\sum_{\substack{1\leq l\leq m\\1\leq k\leq n}}\beta_{k+l}(\bR)\sum_{\substack{m_1+m_2+...+m_l=m-l\\n_1+n_2+...+n_k=n-k}}y_{m_1}\cdots y_{m_{l-1}}\cdot s^h_{m_l,n_1}\cdot y_{n_2}\cdots y_{m_k}.
		\end{eqnarray*}
		Let us denote $$E(z,w)=\sum_{m,n\geq 1}E_{m,n}z^mw^n,\ \ \mbox{and}\ \  F(z,w)=\sum_{m,n\geq 1}F_{m,n}z^mw^n.$$ Consequently $H(z,w)=E(z,w)+F(z,w)+C(z)+C(w)-\varphi(h(\bR))$. 
		The  above formulas for $E_{m,n}, F_{m,n}$ allow us to evaluate $E(z,w)$ and  $F(z,w)$. One can show that
		\begin{eqnarray*}
			E(z,w)&=&\sum_{k,l\geq 1}\beta_{k+l+1}\left(\underbrace{\bR,\bR,...,\bR}_{l},h(\bR),\underbrace{\bR,\bR,...,\bR}_{k}\right)\od^l\odw^k,\\
			F(z,w)&=&	\frac{G(z,w)}{zw}\frac{w\oj-z\ojw}{\od-\odw},
		\end{eqnarray*}
		where $G(z,w)=\sum_{m,n\geq 0}s^h_{m,n}z^{m+1}w^{n+1}$ is the function from \eqref{defofG}. The proof of these identities is similar to the ones from Lemma \ref{LemmaGbyH} and we skip it.
		
		To finish the proof note that \eqref{Cofz} implies that $E(z,w)+C(z)+C(w)-\varphi(h(\bR))$ is exactly $\eta^h_\bR(\od,\odw)$.
	\end{proof}
	Combining Lemmas \ref{LemmaGbyH} and \ref{LemmaHbyG} yields the following Corollary.
	\begin{corollary}\label{GHexplicitformulas}
		The functions $G(z,w), H(z,w)$ given by formulas \eqref{defofG} and \eqref{defofH} are equal
		
		\begin{equation*}
			G(z,w)=\frac{\left(w\od-z\odw\right)\left(\od-\odw\right)}{\left(\eta_{\bU}(z)-\eta_{\bU}(w)\right)(z-w)}\eta^h_\bR(\od,\odw),
		\end{equation*}
		\begin{equation*}
			H(z,w)=\frac{(\oj-\ojw)\left(\od-\odw\right)}{\left(\eta_{\bU}(z)-\eta_{\bU}(w)\right)(z-w)}\eta^h_\bR(\od,\odw).
		\end{equation*} 
	\end{corollary}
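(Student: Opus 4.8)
The plan is to regard Lemmas~\ref{LemmaGbyH} and~\ref{LemmaHbyG} as two linear relations between the unknown series $G(z,w)$ and $H(z,w)$ and to solve that $2\times 2$ system. Lemma~\ref{LemmaGbyH} gives $G(z,w)=H(z,w)\,\dfrac{w\od-z\odw}{\oj-\ojw}$; substituting this into the right-hand side of Lemma~\ref{LemmaHbyG} produces
$$H(z,w)=H(z,w)\,Q(z,w)+\eta^h_\bR(\od,\odw),\qquad Q(z,w)=\frac{1}{zw}\cdot\frac{(w\od-z\odw)(w\oj-z\ojw)}{(\oj-\ojw)(\od-\odw)}.$$
Hence $H(z,w)\bigl(1-Q(z,w)\bigr)=\eta^h_\bR(\od,\odw)$, and the whole corollary reduces to evaluating $1-Q(z,w)$ in closed form.

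Next I would carry out that simplification. Putting $1-Q$ over the common denominator $zw(\oj-\ojw)(\od-\odw)$, the numerator is $zw(\oj-\ojw)(\od-\odw)-(w\od-z\odw)(w\oj-z\ojw)$. I expect the cross terms to cancel and the rest to collect into $(z-w)\bigl(w\,\oj\od-z\,\ojw\odw\bigr)$; this is a polynomial identity in the four indeterminates $\oj,\ojw,\od,\odw$ and needs no analytic input. At this point identity~\eqref{usefulidentity}, i.e. $\oj\od=z\,\eta_\bU(z)$ and $\ojw\odw=w\,\eta_\bU(w)$, converts $w\,\oj\od-z\,\ojw\odw$ into $zw\bigl(\eta_\bU(z)-\eta_\bU(w)\bigr)$, so the numerator becomes $zw(z-w)\bigl(\eta_\bU(z)-\eta_\bU(w)\bigr)$ and therefore
$$1-Q(z,w)=\frac{(z-w)\bigl(\eta_\bU(z)-\eta_\bU(w)\bigr)}{(\oj-\ojw)(\od-\odw)}.$$

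Solving $H(z,w)\bigl(1-Q(z,w)\bigr)=\eta^h_\bR(\od,\odw)$ then yields the stated formula for $H(z,w)$, and feeding it back into $G(z,w)=H(z,w)\,\dfrac{w\od-z\odw}{\oj-\ojw}$ gives the formula for $G(z,w)$. I would also record why the steps are legitimate near the origin: by~\eqref{rowzw1} and~\eqref{rowzw2} one has $\oj=\varphi(\bR)z+O(z^2)$, $\od=\varphi(\bY)z+O(z^2)$, and $\eta_\bU(z)=\varphi(\bU)z+O(z^2)$, while $\varphi(\bR),\varphi(\bY),\varphi(\bU)$ are all nonzero since $\bR,\bY$ are positive and non-degenerate; hence $\oj$, $\od$ and $\eta_\bU$ are injective on a punctured neighborhood of $0$, so for $z\neq w$ there the hypotheses $\oj\neq\ojw$, $\od\neq\odw$ of the two lemmas hold and every denominator above is nonzero, and the identity extends to $z=w$ by continuity.

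The only real work is the algebraic step: one has to verify carefully that
$$zw(\oj-\ojw)(\od-\odw)-(w\od-z\odw)(w\oj-z\ojw)=(z-w)\bigl(w\,\oj\od-z\,\ojw\odw\bigr)$$
\emph{as an identity in the four symbols $\oj,\ojw,\od,\odw$} — that is, before any subordination relation is invoked — since it is exactly this factorization that produces the factor $(z-w)$ and lets~\eqref{usefulidentity} finish the computation. Everything else is routine substitution and collecting terms.
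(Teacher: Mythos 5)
Your proposal is correct and follows essentially the same route as the paper: both treat Lemmas \ref{LemmaGbyH} and \ref{LemmaHbyG} as a linear system, reduce it to a factor $1-Q(z,w)$, and simplify that factor via the polynomial identity and \eqref{usefulidentity}; the only cosmetic difference is that you eliminate $G$ to solve for $H$ first, while the paper eliminates $H$ and solves for $G$. Your explicit verification of the factorization $zw(\oj-\ojw)(\od-\odw)-(w\od-z\odw)(w\oj-z\ojw)=(z-w)\left(w\,\oj\od-z\,\ojw\odw\right)$ is exactly the ``simple algebra'' step the paper leaves to the reader, and it checks out.
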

	\begin{proof}
		Lemmas \ref{LemmaGbyH} and \ref{LemmaHbyG}  imply that $G(z,w)$ and $H(z,w)$ satisfy the following system of equations
		\begin{eqnarray*}
			\left\{\begin{array}{rl}
				G(z,w)&=H(z,w)\frac{w\od-z\odw}{\oj-\ojw},\\
				H(z,w)&=\frac{G(z,w)}{zw}\frac{w\oj-z\ojw}{\od-\odw}+\eta^h_\bR(\od,\odw).
			\end{array}\right.
		\end{eqnarray*}
		Hence $$G(z,w)\left(1-\frac{1}{zw}\frac{w\od-z\odw}{\oj-\ojw}\frac{w\oj-z\ojw}{\od-\odw}\right)=\frac{w\od-z\odw}{\oj-\ojw}\eta^h_\bR(\od,\odw).$$
		Simple algebra and \eqref{usefulidentity} i.e. $\oj\od=z\eta_{\bU}(z)$ show that the expression in the parenthesis is equal to
		$$\frac{(z-w)(\eta_{\bU}(z)-\eta_{\bU}(w))}{(\oj-\ojw)(\od-\odw)}.$$
		This ends the proof.
	\end{proof}

	In the next two lemmas we will show that  $\eta^h_\bR(\cdot,\cdot)$ can be expressed in terms of $\eta^h_\bR(\cdot)$ and $\eta_\bR(\cdot)$. We also  give explicit formulas for both functions.

	\begin{lemma}\label{lematoetahzw}
		For $z\neq w$ in some neighborhood of $0$ in $\C$ the following identity holds
		\begin{equation}
			\eta^h_\bR(z,w)=\frac{z\eta^h_\bR(z)-w\eta^h_\bR(w)}{z-w}+\frac{\eta_\bR(z)w\eta^h_\bR(w)-\eta_\bR(w)z\eta^h_\bR(z)}{z-w}.\label{lematoetach1}
		\end{equation}
		
	\end{lemma}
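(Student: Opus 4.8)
The plan is to pass between Boolean cumulants and moments through the interval--partition formula of Definition~\ref{defboolcumu}, working throughout at the level of power series convergent near the origin. Introduce the two moment generating functions $A(z)=\varphi\left((\bI-z\bR)^{-1}\right)=\sum_{j\ge 0}\varphi(\bR^j)z^j$ and $C(z)=\varphi\left(h(\bR)(\bI-z\bR)^{-1}\right)=\sum_{j\ge 0}\varphi(h(\bR)\bR^j)z^j$, both analytic on a neighbourhood of $0$ since $\bR$ is bounded. As the identity \eqref{lematoetach1} involves only $\eta^h_\bR(z,w)$, $\eta^h_\bR(z)$ and $\eta_\bR(z)$, the strategy is to express each of these through $A$ and $C$ and then eliminate $A$ and $C$.

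First I would record two one--variable facts. From $\bI+z\bR(\bI-z\bR)^{-1}=(\bI-z\bR)^{-1}$ one gets $A(z)=1+M_{\bR}(z)$, so that $\eta_\bR(z)=M_{\bR}(z)/(M_{\bR}(z)+1)=1-1/A(z)$, i.e.\ $1/A(z)=1-\eta_\bR(z)$. For the second, expand $\varphi(h(\bR)\bR^n)$ over $Int(n+1)$ via Definition~\ref{defboolcumu} and split off the block of the partition containing position $1$: that block carries $h(\bR)$ together with $s\ge 0$ consecutive copies of $\bR$ and contributes $\beta_{s+1}(h(\bR),\bR,\dots,\bR)$, while the remaining $n-s$ entries contribute $\varphi(\bR^{n-s})$. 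Summing over $s$ gives the factorisation $C(z)=\eta^h_\bR(z)\,A(z)$ (the one--variable formula for $\eta^h_\bR$ also recorded in \cite{szpojankowski2019conditional}).

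The main step is the two--variable analogue of this factorisation. Fix $l,k\ge 0$ and expand $\varphi(\bR^{l}h(\bR)\bR^{k})=\varphi(h(\bR)\bR^{l+k})$ over $Int(l+k+1)$, then split off the block of the partition that contains the position of $h(\bR)$. Since the blocks of an interval partition are intervals, this block swallows exactly $i$ copies of $\bR$ immediately to the left of $h(\bR)$ and $j$ immediately to the right, contributing the Boolean cumulant $\beta_{i+j+1}(\bR,\dots,\bR,h(\bR),\bR,\dots,\bR)$ with $i$ copies of $\bR$ on the left and $j$ on the right (the coefficient of $z^{i}w^{j}$ in \eqref{etahzw}); the entries lying entirely to its left contribute $\varphi(\bR^{l-i})$ and those entirely to its right contribute $\varphi(\bR^{k-j})$. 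Multiplying the resulting identity by $z^{l}w^{k}$ and summing over $l,k\ge 0$, the right side reassembles into $A(z)\,\eta^h_\bR(z,w)\,A(w)$, while the left side is $\sum_{n\ge 0}\varphi(h(\bR)\bR^n)\sum_{l+k=n}z^{l}w^{k}=\frac{zC(z)-wC(w)}{z-w}$. Hence $A(z)A(w)\,\eta^h_\bR(z,w)=\frac{zC(z)-wC(w)}{z-w}$.

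Finally, substituting $C=\eta^h_\bR\cdot A$ and then $1/A=1-\eta_\bR$ into this relation and dividing by $A(z)A(w)$ yields $\eta^h_\bR(z,w)=\frac{1}{z-w}\left(z\,\eta^h_\bR(z)\left(1-\eta_\bR(w)\right)-w\,\eta^h_\bR(w)\left(1-\eta_\bR(z)\right)\right)$, which is exactly \eqref{lematoetach1} once the two summands are separated; division by $z-w$ is legitimate for $z\ne w$ near $0$, and every series involved converges there. I expect the only genuine difficulty to lie in the main step --- correctly identifying which block of a given interval partition ``absorbs'' the distinguished entry $h(\bR)$, tracking how many adjacent $\bR$'s it swallows on each side, and checking that the left and right remainders recombine into the full moment generating functions $A(z)$ and $A(w)$ with no double counting. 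Everything else is the two elementary one--variable computations above together with routine power--series algebra.
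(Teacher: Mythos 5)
Your proof is correct, but it follows a genuinely different route from the paper. The paper writes $h(\bR)=\bR\,g(\bR)$ and uses the product formula for Boolean cumulants (Proposition \ref{betanaproduktach}) to peel one factor $\bR$ off the middle entry; this produces the relation $\eta^h_\bR(z,w)=\tfrac{1}{z}\eta^g_\bR(z,w)-\tfrac{1}{z}\eta^g_\bR(w)+\tfrac{1}{z}\eta_\bR(z)\eta^g_\bR(w)$, and the identity \eqref{lematoetach1} is then extracted by exploiting the symmetry $\eta^g_\bR(z,w)=\eta^g_\bR(w,z)$ and equating the two resulting expressions. You instead stay at the level of the moment--cumulant formula over interval partitions: your ``block absorbing the marked letter'' factorisation
\begin{equation*}
\varphi\bigl(\bR^{l}h(\bR)\bR^{k}\bigr)=\sum_{i\le l,\,j\le k}\varphi(\bR^{l-i})\,\beta_{i+j+1}\bigl(\underbrace{\bR,\dots,\bR}_{i},h(\bR),\underbrace{\bR,\dots,\bR}_{j}\bigr)\,\varphi(\bR^{k-j})
\end{equation*}
is a sound two--variable analogue of the standard first-block decomposition (the left and right remainders are partitioned independently because blocks are intervals), and the resulting generating-function identity $A(z)A(w)\,\eta^h_\bR(z,w)=\frac{zC(z)-wC(w)}{z-w}$ together with $C=\eta^h_\bR A$ and $1/A=1-\eta_\bR$ gives \eqref{lematoetach1} by routine algebra. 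What your route buys is that the intermediate identity is essentially formula \eqref{wzornaetazw} of Lemma \ref{wzorynaety} in disguise: by the partial-fraction identity $(\bI-z\bR)^{-1}(\bI-w\bR)^{-1}=\frac{1}{z-w}\bigl(z(\bI-z\bR)^{-1}-w(\bI-w\bR)^{-1}\bigr)$ one has $\frac{zC(z)-wC(w)}{z-w}=\varphi\bigl(h(\bR)(\bI-z\bR)^{-1}(\bI-w\bR)^{-1}\bigr)$, so your argument delivers the lemma and most of the subsequent Lemma \ref{wzorynaety} in one stroke; the paper's route, by contrast, is purely a manipulation of cumulant identities (no detour through moments) and defers the moment formulas to Lemma \ref{wzorynaety}, whose one-variable part you in any case had to reprove via the same first-block decomposition.
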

	\begin{proof}
		Let $g(\bR)$ be any element in the unital  subalgebra generated by $\bR$ and let $h(\bR)=\bR\cdot g(\bR)$. Then
		
		$$\eta^h_\bR(z,w)=	\sum_{k,l\geq 0}\beta_{k+l+1}\left(\underbrace{\bR,\bR,...,\bR}_{l},\bR g(\bR),\underbrace{\bR,\bR,...,\bR}_{k}\right)z^lw^k.$$
		Applying Proposition \ref{betanaproduktach} we get
		$$\beta_{k+l+1}\left(\underbrace{\bR,\bR,...,\bR}_{l},\bR g(\bR),\underbrace{\bR,\bR,...,\bR}_{k}\right)=\beta_{k+l+1+1}\left(\underbrace{\bR,\bR,...,\bR}_{l+1}, g(\bR),\underbrace{\bR,\bR,...,\bR}_{k}\right)+
		$$	
		$$+\beta_{l+1}\left(\bR,\bR,...,\bR\right)\cdot\beta_{k+1}\left( g(\bR),\underbrace{\bR,\bR,...,\bR}_{k}\right).$$
		Hence $\eta^h_\bR(z,w)=S_1+S_2$ where
		\begin{eqnarray*}
			S_1&=&\sum_{k,l\geq 0}\beta_{k+l+1+1}\left(\underbrace{\bR,\bR,...,\bR}_{l+1}, g(\bR),\underbrace{\bR,\bR,...,\bR}_{k}\right)z^lw^k\\
			&=&\frac{1}{z}\sum_{k\geq 0,l\geq 1}\beta_{k+l+1}\left(\underbrace{\bR,\bR,...,\bR}_{l}, g(\bR),\underbrace{\bR,\bR,...,\bR}_{k}\right)z^lw^k\\
			&=&\frac{1}{z}\eta^g_\bR(z,w)-\frac{1}{z}\eta^g_\bR(w),
		\end{eqnarray*}	
		and
		\begin{eqnarray*}
			S_2&=&\sum_{k,l\geq 0}\beta_{l+1}\left(\bR,\bR,...,\bR\right)\cdot\beta_{k+1}\left( g(\bR),\underbrace{\bR,\bR,...,\bR}_{k}\right)z^lw^k\\
			&=&\left(\sum_{l\geq 0}\beta_{l+1}\left(\bR,\bR,...,\bR\right)z^l\right)\sum_{k\geq 0}\beta_{k+1}\left( g(\bR),\underbrace{\bR,\bR,...,\bR}_{k}\right)w^k\\
			&=&\frac{1}{z}\eta_\bR(z)\eta^g_\bR(w).
		\end{eqnarray*}
		Therefore
		$$\eta^h_\bR(z,w)=\frac{1}{z}\eta^g_\bR(z,w)-\frac{1}{z}\eta^g_\bR(w)+\frac{1}{z}\eta_\bR(z)\eta^g_\bR(w).$$
		Since the functions $\eta^h_\bR(z,w)$ and $\eta^g_\bR(z,w)$ are symmetric in $z,w$ we get
		$$\frac{1}{w}\eta^g_\bR(z,w)-\frac{1}{w}\eta^g_\bR(z)+\frac{1}{w}\eta_\bR(w)\eta^g_\bR(z)=\frac{1}{z}\eta^g_\bR(z,w)-\frac{1}{z}\eta^g_\bR(w)+\frac{1}{z}\eta_\bR(z)\eta^g_\bR(w).$$
		After some algebra we get 
		\begin{equation}
			(z-w)\eta^g_\bR(z,w)=z\eta^g_\bR(z)-w\eta^g_\bR(w)+\eta_\bR(z)w\eta^g_\bR(w)-\eta_\bR(w)z\eta^g_\bR(z).\label{wzor1}
		\end{equation}
		This proves formula \eqref{lematoetach1} (for $g$ instead of $h$). 
	\end{proof}
	\begin{lemma}\label{wzorynaety}
		For $z,w$ in some neighborhood of $0$ in $\C$
		\begin{align}
			\eta^h_\bR(z)&=\frac{\varphi\left(h(\bR)(\bI-z\bR)^{-1}\right)}{\varphi\left((\bI-z\bR)^{-1}\right)},\label{wzornaetaz}\\ 
			\eta^h_\bR(z,w)&=\frac{\varphi\left(h(\bR)(\bI-z\bR)^{-1}(\bI-w\bR)^{-1}\right)}{\varphi\left((\bI-z\bR)^{-1}\right)\varphi\left((\bI-w\bR)^{-1}\right)}.\label{wzornaetazw}
		\end{align}
	\end{lemma}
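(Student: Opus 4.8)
The plan is to prove the two identities \eqref{wzornaetaz} and \eqref{wzornaetazw} by recognizing that the right-hand sides, expanded as power series in $z$ (and $w$), have coefficients that are precisely the Boolean cumulants appearing in the definitions \eqref{etahz} and \eqref{etahzw}. The natural tool is the formula for Boolean cumulants with products as entries (Proposition \ref{betanaproduktach}), or more directly the known expression for $\eta_\bR^h(z)$ from \cite{szpojankowski2019conditional}. In fact \eqref{wzornaetaz} is exactly the formula for $\eta_\bR^h$ proved in that reference, so the only genuinely new content is \eqref{wzornaetazw}, and I would reduce it to \eqref{wzornaetaz} using Lemma \ref{lematoetahzw}.

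\textbf{Step 1: the one-variable identity.} First I would either cite \cite{szpojankowski2019conditional} for \eqref{wzornaetaz}, or give a short self-contained argument: write $h(\bR)(\bI-z\bR)^{-1} = \sum_{k\ge 0}h(\bR)\bR^k z^k$, so that $\varphi(h(\bR)(\bI-z\bR)^{-1}) = \sum_{k\ge 0}\varphi(h(\bR)\bR^k)z^k$. Expanding $\varphi(h(\bR)\bR^k)$ via Definition \ref{defboolcumu} into a sum over interval partitions of $[k+1]$, the term where $h(\bR)$ and all of $\bR^k$ lie in one block contributes $\beta_{k+1}(h(\bR),\bR,\ldots,\bR)$, while every other interval partition splits into an initial block containing $h(\bR)$ followed by a product of Boolean cumulants in $\bR$ only. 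Summing the geometric-type series shows $\varphi(h(\bR)(\bI-z\bR)^{-1}) = \eta_\bR^h(z)\cdot\bigl(1+M_\bR(z)\bigr) = \eta_\bR^h(z)\,\varphi((\bI-z\bR)^{-1})$, using that $\varphi((\bI-z\bR)^{-1}) = 1+M_\bR(z) = (1-\eta_\bR(z))^{-1}$. This gives \eqref{wzornaetaz}.

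\textbf{Step 2: the two-variable identity.} I would substitute the formula from Step 1 into the identity \eqref{wzor1} (equivalently \eqref{lematoetach1}) from Lemma \ref{lematoetahzw}. Writing $P(z) = \varphi((\bI-z\bR)^{-1})$ and $Q(z)=\varphi(h(\bR)(\bI-z\bR)^{-1})$, so that $\eta_\bR^h(z) = Q(z)/P(z)$ and $\eta_\bR(z) = 1 - 1/P(z)$, the right-hand side of \eqref{lematoetach1} becomes a rational expression in $P(z),P(w),Q(z),Q(w)$; after clearing denominators and simplifying one should land on
\[
\eta^h_\bR(z,w) \;=\; \frac{zQ(z)P(w)-wQ(w)P(z) + \bigl(P(z)-1\bigr)wQ(w) - \bigl(P(w)-1\bigr)zQ(z)}{(z-w)P(z)P(w)} \;=\; \frac{Q(z)P(w) - Q(w)P(z)}{(z-w)P(z)P(w)}\cdot(\text{correction}),
\]
and the key cancellation is that this collapses to $\varphi(h(\bR)(\bI-z\bR)^{-1}(\bI-w\bR)^{-1})/(P(z)P(w))$, the claimed form. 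To verify the numerator equals $\varphi(h(\bR)(\bI-z\bR)^{-1}(\bI-w\bR)^{-1})$ up to the factor $z-w$, I would use the resolvent identity $(\bI-z\bR)^{-1}(\bI-w\bR)^{-1} = \frac{1}{z-w}\bigl(z(\bI-z\bR)^{-1}-w(\bI-w\bR)^{-1}\bigr)\bigl((\bI-z\bR)^{-1}\text{ vs }(\bI-w\bR)^{-1}\text{ bookkeeping}\bigr)$; more precisely $\frac{(\bI-z\bR)^{-1}-(\bI-w\bR)^{-1}}{z-w} = \bR(\bI-z\bR)^{-1}(\bI-w\bR)^{-1}$, which lets me rewrite $\varphi(h(\bR)(\bI-z\bR)^{-1}(\bI-w\bR)^{-1})$ in terms of the one-variable quantities $Q(z), Q(w)$ and match it against the expression obtained from \eqref{lematoetach1}.

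\textbf{Main obstacle.} The one-variable identity is essentially borrowed from \cite{szpojankowski2019conditional}, so it is not the hard part. The main bookkeeping obstacle is the algebraic verification in Step 2: one must check that the rational function produced by \eqref{lematoetach1} genuinely simplifies to the symmetric resolvent expression, which requires carefully tracking the various $z\eta^h_\bR(z)$ versus $\eta^h_\bR(z)$ factors and correctly applying the resolvent identity to express $\varphi(h(\bR)(\bI-z\bR)^{-1}(\bI-w\bR)^{-1})$ as a divided difference $\frac{zQ(z)-wQ(w)}{z-w}$ minus lower-order corrections. Since both sides are analytic near the origin and the series coefficients match by construction (Lemma \ref{lematoetahzw} was derived at the level of Boolean cumulants), the identity extends to all $z,w\in\C\setminus\R_+$ by uniqueness of analytic continuation, giving the stated analytic extension as a bonus.
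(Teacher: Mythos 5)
Your proposal is correct and follows essentially the same route as the paper: the one-variable formula is proved by the interval-partition expansion of $\varphi(h(\bR)\bR^n)$ and resummation to $\eta^h_\bR(z)/(1-\eta_\bR(z))$, and the two-variable formula is then obtained by combining it with Lemma \ref{lematoetahzw}. Your Step 2 bookkeeping does close: with $P(z)=\varphi((\bI-z\bR)^{-1})$, $Q(z)=\varphi(h(\bR)(\bI-z\bR)^{-1})$ the right-hand side of \eqref{lematoetach1} collapses exactly to $\frac{zQ(z)-wQ(w)}{(z-w)P(z)P(w)}$, and the partial-fraction identity $(\bI-z\bR)^{-1}(\bI-w\bR)^{-1}=\frac{1}{z-w}\left(z(\bI-z\bR)^{-1}-w(\bI-w\bR)^{-1}\right)$ identifies the numerator with $\varphi(h(\bR)(\bI-z\bR)^{-1}(\bI-w\bR)^{-1})$ with no correction terms.
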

	\begin{proof}
		For $z$ near the origin we have
		\begin{equation*}
			\varphi\left(h(\bR)(\bI-z\bR)^{-1}\right)=\sum_{n=0}^\infty\varphi\left(h(\bR)\bR^n\right)z^n.
		\end{equation*}
		By the definition of  Boolean cumulants we have
		\begin{eqnarray*}
			\varphi\left(h(\bR)\bR^n\right)&=&\sum_{k=1}^{n+1}\sum_{\stackrel{j_1+j_2+\cdots+j_k=n+1}{j_1,...,j_k\geq 1}}\beta_{j_1}(h(\bR),\bR,...,\bR)\prod_{m=2}^k\beta_{j_m}(\bR)\\
			&=&\sum_{k=1}^{n+1}\sum_{\stackrel{i_1+i_2+\cdots+i_k=n+1-k}{i_1,...,i_k\geq 0}}\beta_{i_1+1}(h(\bR),\bR,...,\bR)\prod_{m=2}^k\beta_{i_m+1}(\bR).
		\end{eqnarray*}
		Thus
		\begin{eqnarray*}
			\varphi\left(h(\bR)(\bI-z\bR)^{-1}\right)&=&\sum_{n=0}^\infty\sum_{k=1}^{n+1}\sum_{\stackrel{i_1+i_2+\cdots+i_k=n+1-k}{i_1,...,i_k\geq 0}}\beta_{i_1+1}(h(\bR),\bR,...,\bR)\prod_{m=2}^k\beta_{i_m+1}(\bR)z^n\\
			&=&\sum_{k=1}^{\infty}\sum_{n=k-1}^\infty\sum_{\stackrel{i_1+i_2+\cdots+i_k=n+1-k}{i_1,...,i_k\geq 0}}\beta_{i_1+1}(h(\bR),\bR,...,\bR)\prod_{m=2}^k\beta_{i_m+1}(\bR)z^n\\
			&=&\sum_{k=1}^{\infty}\sum_{n=0}^\infty z^{k-1}\sum_{\stackrel{i_1+i_2+\cdots+i_k=n}{i_1,...,i_k\geq 0}}\beta_{i_1+1}(h(\bR),\bR,...,\bR)\prod_{m=2}^k\beta_{i_m+1}(\bR)z^n\\
			&=&\sum_{k=1}^{\infty}z^{k-1}\eta^h_{\bR}(z)\left(\frac{\eta_\bR(z)}{z}\right)^{k-1}=\frac{\eta^h_{\bR}(z)}{1-\eta_{\bR}(z)}.
		\end{eqnarray*}
		This proves \eqref{wzornaetaz} since $\frac{1}{1-\eta_{\bR}(z)}=1+M_{\bR}(z)=\varphi\left((\bI-z\bR)^{-1}\right)$. The formula \eqref{wzornaetazw} follows now from combining \eqref{wzornaetaz} with formula \eqref{lematoetach1}.
	\end{proof}
	The following Corollary ends the proof of Theorem \ref{ThMainExpectation}.
	\begin{corollary}
		For $z,w$ in some neighborhood of $0$ in $\C$ the sum  $k_2(z,w)$ from Lemma \ref{lemmak2} is equal
		\begin{eqnarray*}
			k_2(z,w)&=&\frac{\left(w\od-z\odw\right)\left(\od-\odw\right)}{\left(\muz-\muw\right)(z-w)}A_1(z,w)A_2(z,w)\\
			&=&\frac{w\od-z\odw}{z-w}\frac{A_1(z,w)A_2(z,w)}{B(z,w)},
		\end{eqnarray*}
		where
		\begin{eqnarray*}
			A_k(z,w)&=&\varphi\left(\bR g_k(\bR)(\bI-\od\bR)^{-1}(\bI-\odw\bR)^{-1}\right),\\
			B(z,w)&=&\varphi\left(\bR (\bI-\od\bR)^{-1}(\bI-\odw\bR)^{-1}\right).
		\end{eqnarray*}
		
	\end{corollary}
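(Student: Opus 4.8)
The plan is to read off the formula for $k_2(z,w)$ by combining three ingredients already established: the expression $k_2(z,w)=G(z,w)\,\varphi\!\left(h_1(\bR)(\bI-\od\bR)^{-1}(\bI-\odw\bR)^{-1}\right)$ from Lemma~\ref{lemmak2}, the closed form for $G(z,w)$ from Corollary~\ref{GHexplicitformulas}, and the resolvent expression for the two-variable $\eta$-transform from Lemma~\ref{wzorynaety}. Since the Boolean cumulants defining $G(z,w)$ in Lemma~\ref{lemmak2} have $h_2(\bR)$ as their middle entry, Corollary~\ref{GHexplicitformulas} applies with $h=h_2$ and gives
\[
G(z,w)=\frac{(w\od-z\odw)(\od-\odw)}{(\euz-\euw)(z-w)}\,\eta^{h_2}_{\bR}(\od,\odw).
\]
Substituting this into the formula from Lemma~\ref{lemmak2}, and then using Lemma~\ref{wzorynaety} to replace $\eta^{h_2}_{\bR}(\od,\odw)$ by $\varphi\!\left(h_2(\bR)(\bI-\od\bR)^{-1}(\bI-\odw\bR)^{-1}\right)$ divided by $\varphi\!\left((\bI-\od\bR)^{-1}\right)\varphi\!\left((\bI-\odw\bR)^{-1}\right)$, reduces everything to an elementary identity among moment transforms.

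The next step is to cancel the denominator. One has $\varphi\!\left((\bI-\od\bR)^{-1}\right)=1+M_{\bR}(\od)$, and the subordination relation $M_{\bR}(\od)=\muz$ (with the analogue at $w$) turns the product in the denominator into $(1+\muz)(1+\muw)$. On the other hand the definition $\euz=\muz/(1+\muz)$ gives
\[
\euz-\euw=\frac{\muz-\muw}{(1+\muz)(1+\muw)},
\]
so the factors $(1+\muz)(1+\muw)$ cancel and $\euz-\euw$ is simply replaced by $\muz-\muw$. Finally, since $h_k(\bR)=\bR\,g_k(\bR)$, we have $\varphi\!\left(h_k(\bR)(\bI-\od\bR)^{-1}(\bI-\odw\bR)^{-1}\right)=A_k(z,w)$ for $k=1,2$, and assembling the pieces yields the first stated form
\[
k_2(z,w)=\frac{(w\od-z\odw)(\od-\odw)}{(\muz-\muw)(z-w)}\,A_1(z,w)A_2(z,w).
\]

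For the second form I would record the identity $B(z,w)=\dfrac{\muz-\muw}{\od-\odw}$. This follows from the partial-fraction identity $\dfrac{\bR}{(\bI-a\bR)(\bI-b\bR)}=\dfrac{1}{a-b}\!\left(\dfrac{a\bR}{\bI-a\bR}-\dfrac{b\bR}{\bI-b\bR}\right)$ applied with $a=\od$, $b=\odw$: taking $\varphi$ and using $\varphi\!\left(a\bR(\bI-a\bR)^{-1}\right)=M_{\bR}(a)$ together with $M_{\bR}(\od)=\muz$ and $M_{\bR}(\odw)=\muw$. Dividing the factor $(\od-\odw)/(\muz-\muw)$ in the first form by this relation replaces it by $1/B(z,w)$, giving $k_2(z,w)=\dfrac{w\od-z\odw}{z-w}\cdot\dfrac{A_1(z,w)A_2(z,w)}{B(z,w)}$.

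There is no serious obstacle here: every step is an application of results proved earlier plus elementary algebra, and the identities hold for $z,w$ in a neighborhood of $0$ with $z\neq w$, $\od\neq\odw$, $\euz\neq\euw$ (the apparent poles along $z=w$ and $\od=\odw$ being removable, as noted after Theorem~\ref{ThMainExpectation}). The only point requiring a little care is the bookkeeping relating the several transforms — the passage between $\euz$ and $\muz$, between $\varphi\!\left((\bI-\od\bR)^{-1}\right)$ and $1+\muz$, and between the two-variable $\eta^{h}_{\bR}$ and ordinary resolvent traces — which must be matched consistently so that all spurious factors cancel cleanly.
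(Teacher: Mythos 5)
Your proposal is correct and follows essentially the same route as the paper: substitute the closed form of $G(z,w)$ from Corollary \ref{GHexplicitformulas} into Lemma \ref{lemmak2}, rewrite $\eta^{h_2}_{\bR}(\od,\odw)$ via Lemma \ref{wzorynaety} as $A_2(z,w)/((\muz+1)(\muw+1))$, cancel against $\eta_{\bU}(z)-\eta_{\bU}(w)=\frac{\muz-\muw}{(\muz+1)(\muw+1)}$, and obtain the second form from the resolvent identity $\muz-\muw=(\od-\odw)B(z,w)$. Nothing is missing; your intermediate bookkeeping (e.g.\ $\varphi\left((\bI-\od\bR)^{-1}\right)=1+\muz$) is exactly what the paper uses implicitly.
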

	\begin{proof}
		Recall that $h_k(\bR)=\bR g_k(\bR)$. 		From Lemma \ref{lemmak2} and Corollary \ref{GHexplicitformulas} we see that
		
		$$k_2(z,w)=\frac{\left(w\od-z\odw\right)\left(\od-\odw\right)}{\left(\eta_{\bU}(z)-\eta_{\bU}(w)\right)(z-w)}\eta^{h_2}_\bR(\od,\odw)A_1(z,w).$$
		From formula \eqref{wzornaetazw} we have
		$$\eta^{h_2}_\bR(\od,\odw)=\frac{A_2(z,w)}{(\muz+1)(\muw+1)}.$$ 
		To finish the proof we note that $$\eta_{\bU}(z)-\eta_{\bU}(w)=\frac{\muz-\muw}{(\muz+1)(\muw+1)}$$
		and
		\begin{eqnarray*}
			\muz-\muw&=&M_{\bR}(\od)-M_{\bR}(\odw)\\
			&=&(\od-\od)\varphi\left(\bR (\bI-\od\bR)^{-1}(\bI-\odw\bR)^{-1}\right)\\
			&=&(\od-\od)B(z,w).
		\end{eqnarray*}

	\end{proof}
	
	We will also an state the explicit formula for $k(z,w)$ in the special case  when $g(\bR)=\bR(\bI-\bR)^{-1}$. This will be important later when we consider characterization theorems.
	\begin{proposition}\label{Funkcjak}
		Let $\bR$ and $\bY$ be free,  self-adjoint and  positive random variables such that $\bR<\bI$. Denote $g(\bR)=\bR(\bI-\bR)^{-1}$ and $\bU=\bR^{\frac{1}{2}}\bY\bR^{\frac{1}{2}}$. 
		Then the expectation $$k(z,w)=\varphi\left(g(\bR)(\bI-w\bU)^{-1}g(\bR)(\bI-z\bU)^{-1}\right),$$
		is equal
		\begin{equation}
			k(z,w)=k_1(z,w)+k_2(z,w),\label{EqFunkcjaF}
		\end{equation} where
		\begin{eqnarray*}
			k_1(z,w)&=&\frac{1}{\od-\odw}\left(\od\frac{\muz-\varphi(g(\bR))}{(\od-1)^2}-\odw\frac{\muw-\varphi(g(\bR))}{(\odw-1)^2}\right)+\\
			&&\frac{1}{(\od-1)(\odw-1)}\left(\varphi(g(\bR))+\varphi(g(\bR)^2)\right),
		\end{eqnarray*}
		$$k_2(z,w)=\frac{\left(w\od-z\odw\right)}{\left(\muz-\muw\right)(\od-\odw)(z-w)}\left(\frac{\muz-\varphi(g(\bR))}{\od-1}-\frac{\muw-\varphi(g(\bR))}{\odw-1}\right)^2.$$
	\end{proposition}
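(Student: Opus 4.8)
The plan is to specialize Theorem~\ref{ThMainExpectation} to $g_1(\bR)=g_2(\bR)=g(\bR)=\bR(\bI-\bR)^{-1}$ and then evaluate the resulting expectations by elementary resolvent calculus. Since $\bR<\bI$, the operator $\bI-\bR$ is invertible, so $g(\bR)$, $\bR g(\bR)=\bR^2(\bI-\bR)^{-1}$ and $g(\bR)^2=\bR^2(\bI-\bR)^{-2}$ all lie in the unital subalgebra generated by $\bR$; hence Theorem~\ref{ThMainExpectation} applies and gives $k(z,w)=k_1(z,w)+k_2(z,w)$ with
\begin{align*}
k_1(z,w)&=\varphi\left(g(\bR)^2(\bI-\od\bR)^{-1}(\bI-\odw\bR)^{-1}\right),\\
k_2(z,w)&=\frac{w\od-z\odw}{z-w}\,\frac{A(z,w)^2}{B(z,w)},
\end{align*}
where $A(z,w)=\varphi\left(\bR g(\bR)(\bI-\od\bR)^{-1}(\bI-\odw\bR)^{-1}\right)$ and $B(z,w)=\varphi\left(\bR(\bI-\od\bR)^{-1}(\bI-\odw\bR)^{-1}\right)=\frac{\muz-\muw}{\od-\odw}$, the last equality being part of Theorem~\ref{ThMainExpectation} and using the subordination relation $\muz=M_{\bR}(\od)$.

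The computation rests on two partial-fraction identities. The first is the ``two-point'' resolvent identity
\[
(\bI-\od\bR)^{-1}(\bI-\odw\bR)^{-1}=\frac{1}{\od-\odw}\left(\od(\bI-\od\bR)^{-1}-\odw(\bI-\odw\bR)^{-1}\right),
\]
which turns every two-variable expression above into a combination of one-variable ones. The second consists of the scalar decompositions of $\frac{x^2}{(1-x)(1-sx)}$ and $\frac{x^2}{(1-x)^2(1-sx)}$, which, combined with $(\bI-\bR)^{-1}=\bI+g(\bR)$, $(\bI-\bR)^{-2}=\bI+2g(\bR)+g(\bR)^2$ and $\varphi\left((\bI-s\bR)^{-1}\right)=1+M_{\bR}(s)$, yield the single-resolvent formulas
\[
\varphi\left(\bR g(\bR)(\bI-s\bR)^{-1}\right)=\frac{M_{\bR}(s)-s\varphi(g(\bR))}{s(s-1)},\qquad
\varphi\left(g(\bR)^2(\bI-s\bR)^{-1}\right)=\frac{M_{\bR}(s)-s\varphi(g(\bR))}{(s-1)^2}-\frac{\varphi(g(\bR)^2)}{s-1}.
\]
Setting $s=\od$ and $s=\odw$, using $\muz=M_{\bR}(\od)$ and $\muw=M_{\bR}(\odw)$, and rewriting $M_{\bR}(\od)-\od\varphi(g(\bR))=\left(\muz-\varphi(g(\bR))\right)-(\od-1)\varphi(g(\bR))$, one gets
\[
A(z,w)=\frac{1}{\od-\odw}\left(\frac{\muz-\varphi(g(\bR))}{\od-1}-\frac{\muw-\varphi(g(\bR))}{\odw-1}\right),
\]
and, after the parallel reduction for $g(\bR)^2$ together with the identity $\frac{\od}{\od-1}-\frac{\odw}{\odw-1}=-\frac{\od-\odw}{(\od-1)(\odw-1)}$, the stated expression for $k_1(z,w)$. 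Finally, dividing $A(z,w)^2$ by $B(z,w)=\frac{\muz-\muw}{\od-\odw}$ and multiplying by $\frac{w\od-z\odw}{z-w}$ produces the displayed formula for $k_2(z,w)$, which finishes the proof.

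The only real obstacle is bookkeeping: several of the partial-fraction decompositions overlap (a constant term, a simple pole at $1$, a double pole at $1$, and a pole at $1/s$), and one must consistently trade the quantity $M_{\bR}(s)-s\varphi(g(\bR))$ that emerges from the resolvent calculus for the quantity $\muz-\varphi(g(\bR))$ appearing in the statement. There is no analytic subtlety: everything is carried out for $z,w$ in a neighborhood of $0$, where the moment and subordination functions and all the relevant power series converge, and the hypothesis $\bR<\bI$ ensures the operators in play are bounded and the resolvents $(\bI-\bR)^{-1}$, $(\bI-\bR)^{-2}$ exist.
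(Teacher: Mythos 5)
Your proposal is correct and follows essentially the same route as the paper: specialize Theorem \ref{ThMainExpectation} to $g_1=g_2=\bR(\bI-\bR)^{-1}$, split the double resolvent via $(\bI-\od\bR)^{-1}(\bI-\odw\bR)^{-1}=\frac{1}{\od-\odw}\left(\od(\bI-\od\bR)^{-1}-\odw(\bI-\odw\bR)^{-1}\right)$, and evaluate the resulting one-variable expectations by partial fractions, trading $M_{\bR}(s)-s\varphi(g(\bR))$ for $M_{\bR}(s)-\varphi(g(\bR))$. The paper does the identical computation (its scalar identities differ from yours only by a factor of $s$), so there is nothing to add.
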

	\begin{proof}
		We need to evaluate $k_1(z,w)$ and $k_2(z,w)$  from  Theorem \ref{ThMainExpectation}.
		
		The function $k_1(z,w)$ satisfies
		\begin{equation*}
			k_1(z,w)=\varphi\left(g(\bR)^2(\bI-\od\bR)^{-1}(\bI-\odw\bR)^{-1}\right).
		\end{equation*}
		We start by writing
		$$(\bI-\od\bR)^{-1}(\bI-\odw\bR)^{-1}=\frac{1}{\od-\odw}\left(\od(\bI-\od\bR)^{-1}-\odw(\bI-\odw\bR)^{-1}\right).$$
		Hence
		$$k_1(z,w)=\frac{1}{\od-\odw}\left(\varphi\left(\od g(\bR)^2(\bI-\od\bR)^{-1}\right)-\varphi\left(\odw g(\bR)^2(\bI-\odw\bR)^{-1}\right)\right).$$
		The  identity
		$\tfrac{zx^2}{(1-x)^2(1-zx)}=\tfrac{z}{(z-1)^2}\left(\tfrac{zx}{1-zx}-\tfrac{x}{1-x}\right)-\tfrac{z}{z-1}\left(\tfrac{x}{1-x}+\tfrac{x^2}{(1-x)^2}\right)$
		implies that
		\begin{equation*}
			\varphi\left(\od g(\bR)^2(\bI-\od\bR)^{-1}\right)=\frac{\od}{(\od-1)^2}\left(\muz-\varphi(g(\bR))\right)-\frac{\od}{\od-1}\left(\varphi(g(\bR))+\varphi(g(\bR)^2)\right).
		\end{equation*}
		After some algebra we get
		\begin{eqnarray*}
			k_1(z,w)&=&\frac{1}{\od-\odw}\left(\od\frac{\muz-\varphi(g(\bR))}{(\od-1)^2}-\odw\frac{\muw-\varphi(g(\bR))}{(\odw-1)^2}\right)+\\
			&&\frac{1}{(\od-1)(\odw-1)}\left(\varphi(g(\bR))+\varphi(g(\bR)^2)\right).
		\end{eqnarray*}
		For $k_2(z,w)$ we have
		\begin{eqnarray*}
			k_2(z,w)=\frac{\left(w\od-z\odw\right)\left(\od-\odw\right)}{\left(\muz-\muw\right)(z-w)}A(z,w)^2,
		\end{eqnarray*}
		where $$A(z,w)=\varphi\left(\bR g(\bR)(\bI-\od\bR)^{-1}(\bI-\odw\bR)^{-1}\right).$$
		As earlier, after  using the formula
		$$(\bI-\od\bR)^{-1}(\bI-\odw\bR)^{-1}=\frac{1}{\od-\odw}\left(\od(\bI-\od\bR)^{-1}-\odw(\bI-\odw\bR)^{-1}\right)$$
		and the identity
		$\frac{zx^2}{(1-x)(1-zx)}=\frac{zx}{(z-1)(1-zx)}-\frac{zx}{(z-1)(1-x)}$
		we get
		\begin{eqnarray*}
			\varphi\left(\od \bR g(\bR)(\bI-\od\bR)^{-1}\right)&=&\frac{\muz}{\od-1}-\frac{\od}{\od-1}\varphi(\bR(\bI-\bR)^{-1})\\
			&=&\frac{\muz}{\od-1}-\frac{1}{\od-1}\varphi(\bR(\bI-\bR)^{-1})-\varphi(\bR(\bI-\bR)^{-1})\\
			&=&\frac{\muz-\varphi(g(\bR))}{\od-1}-\varphi(g(\bR)).
		\end{eqnarray*}
		Hence
		\begin{equation*}
			A(z,w)=\left[\frac{1}{\od-\odw}\left(\frac{\muz-\varphi(g(\bR))}{\od-1}-\frac{\muw-\varphi(g(\bR))}{\odw-1}\right)\right].
		\end{equation*}
		This ends the proof.
	\end{proof}
	\section{Main problem: the characterization theorems}
	Throughout this section we assume we are given two  free,  self-adjoint, positive and  non-degenerated variables $\bX,\bY$ in a $W^*$-probability space. We also assume  $\bU,\bV$  are defined by
	\begin{equation}\label{defofuandv}
		\left\{\begin{array}{rl}
			\bU&=(\bI+\bX)^{-\frac{1}{2}}\bY(\bI+\bX)^{-\frac{1}{2}},\\
			\bV&=(\bI+\bU)^{\frac{1}{2}}\bX(\bI+\bU)^{\frac{1}{2}}.
		\end{array}\right.
	\end{equation}
	We consider three sets of regression conditions
	\begin{equation*}
		\left\{\begin{array}{rl}
			\varphi\left(\bV\mid \bU\right)&=a\bI,\\
			\varphi\left(\bV^{-1}\mid \bU\right)&=c\bI,
		\end{array}\right.\ \ \mbox{or}\ \ 	\left\{\begin{array}{rl}
			\varphi\left(\bV\mid \bU\right)&=a\bI,\\
			\varphi\left(\bV^{2}\mid \bU\right)&=b\bI,
		\end{array}\right.\ \ \mbox{or}\ \ \left\{\begin{array}{rl}
			\varphi\left(\bV^{-1}\mid \bU\right)&=c\bI,\\
			\varphi\left(\bV^{-2}\mid \bU\right)&=d\bI,
		\end{array}\right.
	\end{equation*}
	where $a,b,c,d\in\R$ are some constants. The goal is to  show that any of  these three sets of conditions implies that $\bX,\bY$ have free-Kummer and free-Poisson distribution respectively.
	
	As earlier we  denote $\bR=(\bI+\bX)^{-1}$ which  is equivalent to $\bX=\bR^{-1}-1$. Thus we can express $\bU$ as $\bU=\bR^{\frac{1}{2}}\bY\bR^{\frac{1}{2}}$.  
	Note that the first and the third set of conditions involves $\bV^{-1}$ and therefore we require to assume that $\bX$ is strictly positive. In this case $\bR<\bI$ and $\bX^{-1}=\bR(\bI-\bR)^{-1}$.
	
	As before  $\oj,\od $ denote the subordination functions such that $\muz=M_{\bY}(\oj)=M_{\bR}(\od)$.
	
	Before we state our characterization theorem we show in the next few lemmas that each considered condition of the form $\varphi\left(\bV^k\mid \bU\right)=c_k\bI$ implies certain algebraic equation involving $\od$, $\muz$ and various constants. It is worth to note that for $k=1,-1$ these equations are  derived using only the technique of subordination for free multiplicative convolution. For  $k=2,-2$ relying solely on subordination does not work and derivation of these equations is based on  Theorem  \ref{ThMainExpectation}.
	
	Since  any two considered conditions give us two equations with two unknown functions, in theory we should be able to find them and obtain various transforms for considered random variables and thus determine their distribution. This is done in Subsection \ref{SekcjaRozkłady}.  See  Proposition \ref{kummerlemma} and  Lemmas \ref{DistofY} and  \ref{DistofX}  for details.

	\subsection{Functional equations implied by regression conditions}
	\begin{lemma}\label{lemmaeq1}
		Assume that
		\begin{equation}
			\varphi\left(\bV\mid \bU\right)=a\bI, \label{reg1} 
		\end{equation}
		for some constant $a$. Then  the identity 
		\begin{equation}
			(\od-1)M_{\bU}(z)+\od=\frac{z}{z+1}\left(a M_\bU(z)+a-\varphi(\bX)\right)\label{HVeq1}
		\end{equation}
		holds for all $z\in\C\setminus\R_+$.
	\end{lemma}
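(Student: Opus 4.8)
The plan is to test the regression identity \eqref{reg1} against the resolvents of $\bU$ and then transport everything, through the conditional subordination formula \eqref{conditionalsubordination}, to the algebra generated by $\bR=(\bI+\bX)^{-1}$.

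\emph{Step 1: reformulate the hypothesis.} By the characterization \eqref{CondProp2} of conditional expectation, applied with $\bZ=(\bI-z\bU)^{-1}$ — a bounded element of the von Neumann algebra generated by $\bU$ for every $z\in\C\setminus\R_+$, since $\bU=\bR^{\frac12}\bY\bR^{\frac12}\geq 0$ — the hypothesis \eqref{reg1} is equivalent to
\begin{equation*}
\varphi\big(\bV(\bI-z\bU)^{-1}\big)=a\,\varphi\big((\bI-z\bU)^{-1}\big)=a\big(1+\muz\big),\qquad z\in\C\setminus\R_+.
\end{equation*}

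\emph{Step 2: compute the left-hand side.} Since $\bV=(\bI+\bU)^{\frac12}\bX(\bI+\bU)^{\frac12}$, traciality of $\varphi$ and the commutation of $(\bI+\bU)^{\frac12}$ with $(\bI-z\bU)^{-1}$ give $\varphi\big(\bV(\bI-z\bU)^{-1}\big)=\varphi\big(\bX(\bI+\bU)(\bI-z\bU)^{-1}\big)$. As $\bX=\bR^{-1}-\bI$ lies in the subalgebra generated by $\bR$, the bimodule property of $\varphi(\cdot\mid\bR)$ lets me pull out $\bX$ and insert the conditional expectation. Using the elementary identity $(\bI+\bU)(\bI-z\bU)^{-1}=-\tfrac1z\bI+\tfrac{z+1}{z}(\bI-z\bU)^{-1}$ and the conditional subordination formula \eqref{conditionalsubordination}, namely $\varphi\big((\bI-z\bU)^{-1}\mid\bR\big)=(\bI-\od\bR)^{-1}$, this becomes
\begin{equation*}
\varphi\big(\bV(\bI-z\bU)^{-1}\big)=-\tfrac1z\,\varphi(\bX)+\tfrac{z+1}{z}\,\varphi\big(\bX(\bI-\od\bR)^{-1}\big).
\end{equation*}

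\emph{Step 3: evaluate the remaining trace and finish.} The operator identity $\bX(\bI-\od\bR)^{-1}=\bR^{-1}+(\od-1)(\bI-\od\bR)^{-1}$ — verified by multiplying on the right by $\bI-\od\bR$ and using $\bX=\bR^{-1}-\bI$ — together with $\bR^{-1}=\bI+\bX$ and the defining property $M_{\bR}(\od)=\muz$ of $\od$ gives $\varphi\big(\bX(\bI-\od\bR)^{-1}\big)=1+\varphi(\bX)+(\od-1)(1+\muz)$. Plugging this into the previous display, equating with $a(1+\muz)$, and performing elementary algebra (the terms $-\tfrac1z\varphi(\bX)$ and $\tfrac{z+1}{z}\varphi(\bX)$ combine to $\varphi(\bX)$; clearing $\tfrac{z+1}{z}$ absorbs the rest) yields exactly \eqref{HVeq1}.

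The validity for all $z\in\C\setminus\R_+$ is not a real issue: each identity above is an equality of functions analytic on the connected slit plane $\C\setminus\R_+$ (on which both $\od$ and $\muz$ are analytic), so checking it in a neighborhood of $0$ suffices, and alternatively it holds pointwise there since $(\bI-z\bU)^{-1}$ is well defined. I do not expect a genuine obstacle here — the whole argument is a short chain of resolvent manipulations whose only real content is the passage from conditioning on $\bU$ (via resolvents) to conditioning on $\bR$, supplied by \eqref{conditionalsubordination}. The one point requiring care is the bookkeeping of which factors are functions of $\bU$ versus functions of $\bR$, so that traciality and the conditional expectation can legitimately be moved past them.
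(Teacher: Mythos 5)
Your proof is correct and follows essentially the same route as the paper: test the regression condition against resolvents of $\bU$, transfer to the algebra of $\bR$ via \eqref{conditionalsubordination}, and evaluate $\varphi\left(\bX(\bI-\od\bR)^{-1}\right)=\varphi(\bX)+\od+(\od-1)\muz$ with the same resolvent identity. The only cosmetic difference is that you keep the factor $(\bI+\bU)$ attached to $\bX$, so your right-hand side is simply $a(1+\muz)$, whereas the paper first rewrites the hypothesis as $\varphi(\bX\mid\bU)=a(\bI+\bU)^{-1}$ and additionally uses $a\varphi\left((\bI+\bU)^{-1}\right)=\varphi(\bX)$.
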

	\begin{proof}
		We start by rewriting \eqref{reg1} as
		\begin{equation}
			\varphi\left(\bX\mid\bU\right)=a(\bI+\bU)^{-1}.\label{r1}
		\end{equation}
		Multiplying both sides by $(\bI-z\bU)^{-1}$ and applying expectation gives
		\begin{equation}
			\varphi\left(\bX(\bI-z\bU)^{-1}\right)=a\varphi\left((\bI+\bU)^{-1}(\bI-z\bU)^{-1}\right).\label{r2}
		\end{equation}
		It is easy to check that for $z\neq -1$
		\begin{equation*}
			(1+\bU)^{-1}(\bI-z\bU)^{-1}=\frac{1}{z+1}(\bI+\bU)^{-1}+\frac{z}{z+1}z\bU(\bI-z\bU)^{-1}+\frac{z}{z+1}\bI.
		\end{equation*}
		Hence the right hand side equals
		\begin{equation}
			a\varphi\left((1+\bU)^{-1}(\bI-z\bU)^{-1}\right)=\frac{\varphi(\bX)}{z+1}+\frac{az}{z+1}M_\bU(z)+\frac{az}{z+1},
		\end{equation}
		since \eqref{r1} implies that $a\varphi((\bI+\bU)^{-1})=\varphi(\bX)$.
		
		By the subordination property (formula \eqref{conditionalsubordination}) the left hand side of \eqref{r2} is equal
		\begin{eqnarray*}
			\varphi\left(\bX(\bI-z\bU)^{-1}\right)&=&\varphi\left(\bX\varphi\left((\bI-z\bU)^{-1}\mid\bR\right)\right)\\
			&=&\varphi\left((\bR^{-1}-\bI)(\bI-\od \bR)^{-1}\right)\\
			&=&\varphi\left(\bR^{-1}+(\od-1)(\bI-\od\bR)^{-1}\right)\\
			&=&\varphi(\bX)+1+(\od-1)(1+M_{\bR}(\od))\\
			&=&\varphi(\bX)+\od+(\od-1)M_{\bU}(z).
		\end{eqnarray*}
		Hence \eqref{r2} yields
		\begin{equation}
			\varphi(\bX)+\od+(\od-1)M_{\bU}(z)=\frac{\varphi(\bX)}{z+1}+\frac{az}{z+1}M_\bU(z)+\frac{az}{z+1},
		\end{equation}
		which is equivalent to \eqref{HVeq1}.
	\end{proof}
	\begin{lemma}\label{lemmaeq3}
		Assume that
		\begin{equation}
			\varphi\left(\bV^{-1}\mid \bU\right)=c\bI, \label{reg3} 
		\end{equation}
		for some constant $c$. Then  the identity
		\begin{equation}
			\frac{z}{\od-1}\left(M_{\bU}(z)-\varphi(\bX^{-1})\right)=cz+c(z+1)M_{\bU}(z)\label{HVeq3}
		\end{equation}
		holds for all $z\in\C\setminus\R_+$.
	\end{lemma}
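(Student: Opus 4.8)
The plan is to mirror the proof of Lemma \ref{lemmaeq1}, replacing $\bV$ by $\bV^{-1}$ and $\bX$ by $\bX^{-1}$. First I would rewrite the hypothesis \eqref{reg3}: since $\bV^{-1}=(\bI+\bU)^{-\frac12}\bX^{-1}(\bI+\bU)^{-\frac12}$ and $\varphi(\cdot\mid\bU)$ is a $\bU$-bimodule map, \eqref{reg3} is equivalent to
\begin{equation*}
\varphi\left(\bX^{-1}\mid\bU\right)=c(\bI+\bU).
\end{equation*}
Multiplying both sides on the right by $(\bI-z\bU)^{-1}$ and applying $\varphi$ gives, via property \eqref{CondProp2} and the module property,
\begin{equation*}
\varphi\left(\bX^{-1}(\bI-z\bU)^{-1}\right)=c\,\varphi\left((\bI+\bU)(\bI-z\bU)^{-1}\right),
\end{equation*}
which I would first establish for $z$ near $0$ and then extend to all $z\in\C\setminus\R_+$ by uniqueness of analytic continuation (exactly as in the remark following Theorem \ref{ThMainExpectation}), since both sides are holomorphic there, $\bU$ being positive.

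Next I would evaluate the two sides separately. For the right-hand side, using $z\bU(\bI-z\bU)^{-1}=(\bI-z\bU)^{-1}-\bI$ (hence $\varphi((\bI-z\bU)^{-1})=1+M_\bU(z)$) one gets $\varphi\left((\bI+\bU)(\bI-z\bU)^{-1}\right)=\tfrac1z\bigl(z+(z+1)M_\bU(z)\bigr)$. For the left-hand side, recall $\bR=(\bI+\bX)^{-1}$, so $\bX^{-1}=\bR(\bI-\bR)^{-1}$ is a bounded element of the von Neumann algebra generated by $\bR$ — here $\bR<\bI$ because $\bX$ is strictly positive — and the conditional subordination identity \eqref{conditionalsubordination} applied to the pair $(\bR,\bY)$ (so that $\bU=\bR^{\frac12}\bY\bR^{\frac12}$ and $M_\bU(z)=M_\bR(\od)$) gives
\begin{equation*}
\varphi\left(\bX^{-1}(\bI-z\bU)^{-1}\right)=\varphi\left(\bR(\bI-\bR)^{-1}(\bI-\od\bR)^{-1}\right).
\end{equation*}
Using the partial-fraction identity $\tfrac{r}{(1-r)(1-\od r)}=\tfrac{1}{1-\od}\left(\tfrac{1}{1-r}-\tfrac{1}{1-\od r}\right)$ together with $\varphi((\bI-\bR)^{-1})=1+\varphi(\bX^{-1})$ and $\varphi((\bI-\od\bR)^{-1})=1+M_\bR(\od)=1+M_\bU(z)$, this equals $\tfrac{M_\bU(z)-\varphi(\bX^{-1})}{\od-1}$.

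Finally, equating the two expressions and multiplying through by $z$ gives precisely \eqref{HVeq3}. I do not expect any genuine obstacle: the argument is a direct parallel of Lemma \ref{lemmaeq1}, and the only points requiring a little care are the sign bookkeeping in the partial-fraction step, the observation that $\bX>0$ is exactly what legitimizes writing $\bX^{-1}=\bR(\bI-\bR)^{-1}$ inside the algebra generated by $\bR$ (so that the module property and \eqref{conditionalsubordination} apply), and the analytic-continuation passage from a neighbourhood of $0$ to all of $\C\setminus\R_+$.
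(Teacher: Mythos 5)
Your proposal is correct and follows essentially the same route as the paper: rewrite the hypothesis as $\varphi(\bX^{-1}\mid\bU)=c(\bI+\bU)$, multiply by $(\bI-z\bU)^{-1}$, apply $\varphi$, and evaluate the left side via the conditional subordination identity \eqref{conditionalsubordination} together with a partial-fraction decomposition of $\bR(\bI-\bR)^{-1}(\bI-\od\bR)^{-1}$. The only difference is the (equivalent) form of the partial-fraction identity you use, so there is nothing substantive to add.
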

	\begin{proof}

		The condition $$\varphi\left(\bV^{-1}\mid \bU\right)=c\bI,$$
		is equivalent to
		\begin{equation}
			\varphi\left(\bX^{-1}\mid \bU\right)=c(\bI+\bU).\label{varphiofU}
		\end{equation}
		We multiply both sides by $z(\bI-z\bU)^{-1}$ and apply the state  $\varphi$ to get
		\begin{equation*}
			z\varphi\left(\bX^{-1}(\bI-z\bU)^{-1}\right)=cz\varphi\left((\bI+\bU)(\bI-z\bU)^{-1}\right).
		\end{equation*}
		We now evaluate both sides. The right hand side is equal
		\begin{eqnarray*}
			cz\varphi\left((\bI+\bU)(\bI-z\bU)^{-1}\right)	&=&cz\varphi\left(\bI+z\bU(\bI-z\bU)^{-1}\right)+c\varphi\left(z\bU(\bI-z\bU)^{-1}\right)\\
			&=&cz+cz\muz+c\muz\\
			&=&cz+c(z+1)\muz.
		\end{eqnarray*}
		For the  left hand side we apply formula \eqref{conditionalsubordination} to get
		\begin{eqnarray*}
			z\varphi\left(\bX^{-1}(\bI-z\bU)^{-1}\right)&=&z\varphi\left(\bX^{-1}\varphi((\bI-z\bU)^{-1}\mid \bR)\right)\\
			&=&z\varphi\left(\bR(\bI-\bR)^{-1}(1-\od\bR)^{-1}\right).
		\end{eqnarray*}
		From the identity
		$$\frac{x}{(1-x)(1-zx)}=\frac{1}{(z-1)}\frac{zx}{(1-zx)}-\frac{1}{(z-1)}\frac{x}{(1-x)},$$ we  see that
		\begin{eqnarray*}
			z\varphi\left(\bX^{-1}(\bI-z\bU)^{-1}\right)&=&\frac{z}{\od-1}\left(\muz-\varphi(\bR(\bI-\bR)^{-1})\right)\\
			&=&\frac{z}{\od-1}\left(\muz-\varphi(\bX^{-1})\right).
		\end{eqnarray*}
		
	\end{proof}
	\begin{lemma}\label{lemmaeq4}
		Assume that
		\begin{equation}
			\varphi\left(\bV^{-2}\mid \bU\right)=d\bI, \label{reg4} 
		\end{equation}
		for some constant $d$. Then  the following identity holds
		\begin{equation*}
			k(z,-1)=d+d\left(1+\frac{1}{z}\right)\muz,
		\end{equation*}
		for all $z\in\C\setminus\R_{+}$,	where $k(z,w)$ is given by the formula \eqref{EqFunkcjaF}.	
	\end{lemma}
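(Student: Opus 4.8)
The plan is to reduce the hypothesis $\varphi(\bV^{-2}\mid\bU)=d\bI$ to an identity for the expectation $k(z,-1)$ and then evaluate that identity by hand. First I would rewrite $\bV^{-2}$. Since $\bV=(\bI+\bU)^{\frac12}\bX(\bI+\bU)^{\frac12}$ and $\bX$ is strictly positive,
\[
\bV^{-2}=(\bI+\bU)^{-\frac12}\bX^{-1}(\bI+\bU)^{-1}\bX^{-1}(\bI+\bU)^{-\frac12},
\]
and because the factors $(\bI+\bU)^{\pm\frac12}$ lie in the von Neumann algebra generated by $\bU$, the bimodule property of the conditional expectation turns $\varphi(\bV^{-2}\mid\bU)=d\bI$ into
\[
\varphi\!\left(\bX^{-1}(\bI+\bU)^{-1}\bX^{-1}\mid\bU\right)=d(\bI+\bU).
\]
Since $\bR<\bI$ we have $\bX^{-1}=\bR(\bI-\bR)^{-1}=g(\bR)$, so this reads exactly
\[
\varphi\!\left(g(\bR)(\bI+\bU)^{-1}g(\bR)\mid\bU\right)=d(\bI+\bU).
\]

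Next I would apply the characterization \eqref{CondProp2} of the conditional expectation with $\bZ=(\bI-z\bU)^{-1}$, which is a bounded function of $\bU$ and hence lies in $W^*(\bU)$ for every $z\in\C\setminus\R_+$. This yields
\[
\varphi\!\left(g(\bR)(\bI+\bU)^{-1}g(\bR)(\bI-z\bU)^{-1}\right)=d\,\varphi\!\left((\bI+\bU)(\bI-z\bU)^{-1}\right).
\]
The left-hand side is precisely $k(z,-1)$: it is obtained from $k(z,w)=\varphi\!\left(g(\bR)(\bI-w\bU)^{-1}g(\bR)(\bI-z\bU)^{-1}\right)$ of Proposition \ref{Funkcjak} by setting $w=-1$, so the explicit formula \eqref{EqFunkcjaF} applies to it.

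Finally I would evaluate the remaining expectation using $(\bI-z\bU)^{-1}=\bI+z\bU(\bI-z\bU)^{-1}$ and $\bU(\bI-z\bU)^{-1}=\tfrac1z\!\left[(\bI-z\bU)^{-1}-\bI\right]$, which give $\varphi\!\left((\bI-z\bU)^{-1}\right)=1+\muz$ and $\varphi\!\left(\bU(\bI-z\bU)^{-1}\right)=\tfrac1z\muz$; hence
\[
\varphi\!\left((\bI+\bU)(\bI-z\bU)^{-1}\right)=1+\Bigl(1+\tfrac1z\Bigr)\muz,
\]
and multiplying by $d$ produces the stated identity $k(z,-1)=d+d\bigl(1+\tfrac1z\bigr)\muz$. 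I do not expect a genuine obstacle here: the only points requiring attention are peeling off the $(\bI+\bU)^{\pm\frac12}$ factors via the bimodule property and recognising $\bX^{-1}$ as $g(\bR)$, so that the left-hand side matches the $k$ of Proposition \ref{Funkcjak}; everything in sight is analytic on $\C\setminus\R_+$, so the identity holds on all of that set.
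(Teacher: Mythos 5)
Your proposal is correct and follows essentially the same route as the paper: rewrite the hypothesis as $\varphi\left(\bX^{-1}(\bI+\bU)^{-1}\bX^{-1}\mid\bU\right)=d(\bI+\bU)$ via the bimodule property, test against $(\bI-z\bU)^{-1}$, recognise the left side as $k(z,-1)$ with $g(\bR)=\bX^{-1}=\bR(\bI-\bR)^{-1}$, and compute $\varphi\left((\bI+\bU)(\bI-z\bU)^{-1}\right)=1+\left(1+\tfrac{1}{z}\right)\muz$. The only difference is that you spell out the resolvent identities and the membership of $(\bI-z\bU)^{-1}$ in $W^*(\bU)$, which the paper leaves implicit.
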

	\begin{proof}
		We start by rewriting the condition \eqref{reg4} as
		\begin{equation}
			\varphi\left(\bX^{-1}(\bI+\bU)^{-1}\bX^{-1} \mid\bU\right)=d(\bI+\bU).\label{stala-p}
		\end{equation}
		Multiplying  both sides by $(\bI-z\bU)^{-1}$ and applying the state $\varphi$ to both sides gives 
		\begin{equation*}
			\varphi\left(\bX^{-1}(\bI+\bU)^{-1}\bX^{-1}(\bI-z\bU)^{-1}\right)=d\varphi\left((\bI+\bU)(\bI-z\bU)^{-1}\right).
		\end{equation*}
		The middle term is exactly  $k(z,-1)$ by  formula  \eqref{Defofk} which  defines the  function $k$. For the term on the right hand side we have
		\begin{equation*}
			d\varphi\left((\bI+\bU)(\bI-z\bU)^{-1}\right)=d+d\left(1+\frac{1}{z}\right)\muz.
		\end{equation*}
		This ends the proof of the Lemma.
	\end{proof}
	\begin{lemma}\label{lemmaeq2}
		Assume that
		\begin{equation}
			\varphi\left(\bV^2\mid \bU\right)=b\bI, \label{reg2} 
		\end{equation}
		for some constant $b$. Denote $D(z)=(\od-1)\muz+\od$. Then  the identity
		\begin{multline}
			\varphi(\bX^2)+\varphi(\bY)\varphi(\bX)+\od\varphi(\bX)+(\od-1)D(z)+\frac{D(z)^2}{z\muz}\\=\frac{b\varphi((\bI+\bU)^{-1})}{z+1}+\frac{bz}{z+1}(\muz+1)+\varphi(\bY)\frac{D(z)}{\muz}\label{HVeq2} 
		\end{multline}
		holds for all $z\in\C\setminus\R_{+}$.
	\end{lemma}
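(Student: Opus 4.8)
My plan is to follow the scheme of Lemmas \ref{lemmaeq1}--\ref{lemmaeq4}, turning \eqref{reg2} into a scalar identity in $z$ by testing it against $(\bI-z\bU)^{-1}$ and then evaluating each resulting term. \textbf{First}, since $\bV=(\bI+\bU)^{\frac{1}{2}}\bX(\bI+\bU)^{\frac{1}{2}}$ gives $\bV^{2}=(\bI+\bU)^{\frac{1}{2}}\bX(\bI+\bU)\bX(\bI+\bU)^{\frac{1}{2}}$ and $(\bI+\bU)^{\frac{1}{2}}$ lies in the algebra generated by $\bU$, the bimodule property of $\varphi(\cdot\mid\bU)$ rewrites \eqref{reg2} as $\varphi(\bX(\bI+\bU)\bX\mid\bU)=b(\bI+\bU)^{-1}$. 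Multiplying by $(\bI-z\bU)^{-1}$ and applying $\varphi$ via \eqref{CondProp2} yields, for $z$ near $0$,
\begin{equation*}
\varphi\!\left(\bX^{2}(\bI-z\bU)^{-1}\right)+\varphi\!\left(\bX\bU\bX(\bI-z\bU)^{-1}\right)=b\,\varphi\!\left((\bI+\bU)^{-1}(\bI-z\bU)^{-1}\right).
\end{equation*}

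\textbf{Next} I would dispose of the two easy terms. As in Lemma \ref{lemmaeq1}, the decomposition $(\bI+\bU)^{-1}(\bI-z\bU)^{-1}=\tfrac{1}{z+1}(\bI+\bU)^{-1}+\tfrac{z}{z+1}(\bI-z\bU)^{-1}$ makes the right-hand side equal to $\tfrac{b\varphi((\bI+\bU)^{-1})}{z+1}+\tfrac{bz}{z+1}(\muz+1)$. For $\varphi(\bX^{2}(\bI-z\bU)^{-1})$, since $\bX^{2}$ lies in the algebra generated by $\bR=(\bI+\bX)^{-1}$, formula \eqref{conditionalsubordination} gives $\varphi(\bX^{2}(\bI-z\bU)^{-1})=\varphi(\bX^{2}(\bI-\od\bR)^{-1})$, and writing $\bX=\bR^{-1}-\bI$ and using partial fractions (as in Lemma \ref{lemmaeq3}) I get $\varphi(\bX^{2}(\bI-z\bU)^{-1})=\varphi(\bX^{2})+\od\varphi(\bX)+(\od-1)D(z)$.

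The only genuinely new term is $\varphi(\bX\bU\bX(\bI-z\bU)^{-1})$, and here subordination alone does not suffice. By traciality this quantity equals $\partial_{w}k(z,w)\big|_{w=0}$, where $k(z,w):=\varphi(\bX(\bI-z\bU)^{-1}\bX(\bI-w\bU)^{-1})$ is the function of Theorem \ref{ThMainExpectation} for $g_{1}(\bR)=g_{2}(\bR)=\bX=\bR^{-1}-\bI$. Writing $k=k_{1}+k_{2}$ as in that theorem (so $A_{1}=A_{2}=:A$) and using $\omega_{2}(0)=0$, $\omega_{2}'(0)=\beta_{1}(\bY)=\varphi(\bY)$ (by \eqref{rowzw2}), differentiation at $w=0$ gives
\begin{equation*}
\varphi\!\left(\bX\bU\bX(\bI-z\bU)^{-1}\right)=\varphi(\bY)\,\varphi\!\left(\bX^{2}\bR(\bI-\od\bR)^{-1}\right)+\frac{\od-z\varphi(\bY)}{z}\cdot\frac{A(z,0)^{2}}{B(z,0)},
\end{equation*}
with $A(z,0)=\varphi(\bR\bX(\bI-\od\bR)^{-1})$ and $B(z,0)=\varphi(\bR(\bI-\od\bR)^{-1})$. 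Using $\bR\bX=\bI-\bR$ and the elementary identities $\od\bX^{2}\bR(\bI-\od\bR)^{-1}=\bX^{2}(\bI-\od\bR)^{-1}-\bX^{2}$ and $\od\bR(\bI-\od\bR)^{-1}=(\bI-\od\bR)^{-1}-\bI$ together with the previous step, one finds $\varphi(\bX^{2}\bR(\bI-\od\bR)^{-1})=\varphi(\bX)+\tfrac{(\od-1)D(z)}{\od}$, $A(z,0)=\tfrac{D(z)}{\od}$, $B(z,0)=\tfrac{\muz}{\od}$; inserting these and simplifying via $\tfrac{D(z)}{\muz}=(\od-1)+\tfrac{\od}{\muz}$ collapses the expression to $\varphi(\bX\bU\bX(\bI-z\bU)^{-1})=\varphi(\bY)\varphi(\bX)+\tfrac{D(z)^{2}}{z\muz}-\varphi(\bY)\tfrac{D(z)}{\muz}$.

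\textbf{Finally}, plugging the three evaluations into the identity from the first step and moving the term $\varphi(\bY)D(z)/\muz$ to the right-hand side produces exactly \eqref{HVeq2} for $z$ near $0$; since both sides extend to meromorphic functions on $\C\setminus\R_{+}$ (cf.\ the remark following Theorem \ref{ThMainExpectation}), the identity holds on all of $\C\setminus\R_{+}$. The hard part will be the computation of $\varphi(\bX\bU\bX(\bI-z\bU)^{-1})$: one must recognize it as the linear-in-$w$ part of the correct specialization of Theorem \ref{ThMainExpectation}, justify the term-by-term differentiation of $k_{1}$ and $k_{2}$ at $w=0$, and then carry out the algebra that reduces everything to the $\od$, $\muz$ and $D(z)$ combination demanded by \eqref{HVeq2}.
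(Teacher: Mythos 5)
Your proposal is correct and follows essentially the same route as the paper: rewrite \eqref{reg2} as $\varphi(\bX(\bI+\bU)\bX\mid\bU)=b(\bI+\bU)^{-1}$, test against $(\bI-z\bU)^{-1}$, evaluate $\varphi(\bX^2(\bI-z\bU)^{-1})$ and the right-hand side via subordination, and identify $\varphi(\bX\bU\bX(\bI-z\bU)^{-1})$ with $\partial_w k(z,w)\big|_{w=0}$ computed from Theorem \ref{ThMainExpectation} with $g_1=g_2=\bR^{-1}-\bI$. Your intermediate values ($A(z,0)=D(z)/\od$, $B(z,0)=\muz/\od$, $\varphi(\bX^2\bR(\bI-\od\bR)^{-1})=\varphi(\bX)+(\od-1)D(z)/\od$) and the final simplification agree with the paper's computation.
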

	\begin{proof}
		Since $\bV^2=(\bI+\bU)^{\frac{1}{2}}\bX(\bI+\bU)\bX(\bI+\bU)^{\frac{1}{2}}$, the condition  \eqref{reg2} is equivalent to
		\begin{equation}
			\varphi\left(\bX(\bI+\bU)\bX\mid\bU\right)=b(\bI+\bU)^{-1}.\label{r3}
		\end{equation}
		We multiply both sides by $(\bI-z\bU)^{-1}$ and apply $\varphi$ to get
		\begin{equation}
			\varphi\left(\bX^2(\bI-z\bU)^{-1}\right)+\varphi\left(\bX\bU\bX(\bI-z\bU)^{-1}\right)=b\varphi\left((\bI+\bU)^{-1}(\bI-z\bU)^{-1}\right).\label{reg2eq1}
		\end{equation}
		The right hand side was calculated in the previous Lemma and  is equal 
		\begin{equation*}
			b\varphi\left((\bI+\bU)^{-1}(\bI-z\bU)^{-1}\right)=	\frac{b\varphi((\bI+\bU)^{-1})}{z+1}+\frac{bz}{z+1}(\muz+1).
		\end{equation*}
		The first term of \eqref{reg2eq1} can be calculated using formula \eqref{conditionalsubordination} as follows
		\begin{eqnarray*}
			\varphi\left(\bX^2(\bI-z\bU)^{-1}\right)&=&\varphi\left(\bX^2\varphi\left((\bI-z\bU)^{-1}\mid\bR\right)\right)\\
			&=&\varphi\left((\bR^{-1}-\bI)^2(\bI-\od \bR)^{-1}\right)\\
			&=&\varphi\left((\bR^{-1}-\bI)^2+\od(\bR^{-1}-1)+(\od-1)\bI+(\od-1)^2(\bI-\od\bR)^{-1}\right)\\
			&=&\varphi(\bX^2)+\od\varphi(\bX)+\od-1+(\od-1)^2(1+\muz)\\
			&=&\varphi(\bX^2)+\od\varphi(\bX)+(\od-1)D(z),
		\end{eqnarray*}
		The middle term  of \eqref{reg2eq1} is equal 
		\begin{equation*}
			\varphi\left(\bX\bU\bX(\bI-z\bU)^{-1}\right)=\frac{\partial }{\partial w}k(z,w)\Bigr|_{w=0},
		\end{equation*}
		where
		$k(z,w)=\varphi\left(\bX(\bI-w\bU)^{-1}\bX(\bI-z\bU)^{-1}\right)$. Since $\bX=g(\bR)=\bR^{-1}-1$ by Theorem \ref{ThMainExpectation} we have $k(z,w)=k_1(z,w)+k_2(z,w)$ where
		\begin{eqnarray*}
			k_1(z,w)&=&\varphi\left((\bR^{-1}-\bI)^2(\bI-\od\bR)^{-1}(\bI-\odw\bR)^{-1}\right),\\
			k_2(z,w)&=&\frac{\left(w\od-z\odw\right)\left(\od-\odw\right)}{\left(\muz-\muw\right)(z-w)}\varphi\left((\bI-\bR)(\bI-\od\bR)^{-1}(\bI-\odw\bR)^{-1}\right)^2.
		\end{eqnarray*}
		Since $\omega_2'(0)=\varphi(\bY)$ (see formula \eqref{rowzw2} for example) we conclude that
		\begin{eqnarray*}
			\frac{\partial}{\partial w} k_1(z,w)\Bigr|_{w=0}&=&\omega_2'(0)\varphi\left((\bR^{-1}-\bI)^2(\bI-\od\bR)^{-1}\bR\right)\\
			&=&\varphi(\bY)\varphi\left(-\frac{1}{\od}\bI+\bR^{-1}+\frac{(\od-1)^2}{\od}(\bI-\od\bR)^{-1}\right)\\
			&=&\varphi(\bY)\left(-\frac{1}{\od}+\varphi(\bX)+1+\frac{(\od-1)^2}{\od}(1+\muz)\right)\\
			&=&\varphi(\bY)\varphi(\bX)+\frac{\varphi(\bY)}{\od}(\od-1)D(z).
		\end{eqnarray*}
		The function $w\od-z\odw$ is zero when $w=0$ therefore the product rule for derivatives implies that
		\begin{eqnarray*}
			\frac{\partial}{\partial w} k_2(z,w)\Bigr|_{w=0}&=&(\od-\varphi(\bY)z)\frac{\od}{z\muz}\varphi\left((\bI-\bR)(\bI-\od\bR)^{-1}\right)^2.
		\end{eqnarray*}
		Next we have $$ \varphi\left((\bI-\bR)(\bI-\od\bR)^{-1}\right)=1+\muz-\frac{1}{\od}\muz=\frac{D(z)}{\od}.$$
		Consequently
		\begin{equation*}
			\frac{\partial}{\partial w} k_2(z,w)\Bigr|_{w=0}=\frac{\od-\varphi(\bY)z}{z\od\muz}D(z)^2.
		\end{equation*}
		After some algebra we get
		\begin{equation*}
			\varphi\left(\bX\bU\bX(\bI-z\bU)^{-1}\right)=\varphi(\bY)\varphi(\bX)+\frac{D(z)^2}{z\muz}-\varphi(\bY)\frac{D(z)}{\muz}.
		\end{equation*}
		Combining and rearranging all terms gives \eqref{HVeq2}.
	\end{proof}
	
	\subsection{Determination of a distribution from functional equations} \label{SekcjaRozkłady}
	In the proofs of all three characterizations we will encounter similar equations that will  allow us to determine the  distributions of $\bX$ and $\bY$. In order to not repeat ourselves we will write down the common part of the  proof in a form of few lemmas. We start with the following proposition  that characterizes  free-Kummer distribution as the one for which its Cauchy-Stieltjes transform is  a solution of a  certain quadratic equation. (See also Remark \ref{EqForGremak}.)

	\begin{proposition}\label{kummerlemma}
		Let  $G=G(z)$ be a Cauchy-Stieltjes transform of a positive random variable $\bZ$. Suppose $G$ satisfies the following equation
		\begin{equation}
			z(z+1)G^2(z)-(\gamma z(z+1)-(\alpha-1) (z+1)+\beta z)G(z)+\gamma z+\delta=0,\label{kummereq}
		\end{equation}
		for some $\alpha,\gamma>0$, $\beta,\delta\in\R$. If either $\beta\geq 0$ or $\alpha>1$  then $\delta$ is uniquely determined by $\alpha,\beta,\gamma$  and $\bZ$ has the  free-Kummer distribution $\mathcal{K}(\alpha,\beta,\gamma)$.
		
	\end{proposition}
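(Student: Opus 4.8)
The plan is to solve \eqref{kummereq} for $G$ explicitly and then impose the two properties that characterise the Cauchy-Stieltjes transform of a positive, compactly supported measure: holomorphy off a compact subset of $[0,\infty)$ with $G(z)=1/z+O(z^{-2})$ at infinity, and non-negativity of both the Stieltjes inversion density and of every point mass.

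First I would set $P(z)=\gamma z(z+1)-(\alpha-1)(z+1)+\beta z$ and $\Delta(z)=P(z)^{2}-4z(z+1)(\gamma z+\delta)$; solving the quadratic \eqref{kummereq} gives $2z(z+1)G(z)=P(z)+\varepsilon\sqrt{\Delta(z)}$ on the connected set $\C\setminus\mathrm{supp}(\mu_{\bZ})$, for a fixed branch sign $\varepsilon$. Since $\Delta$ is a real polynomial of degree $4$ with leading coefficient $\gamma^{2}>0$ while $G(z)\to 0$, the sign is forced, and $T(z):=2z(z+1)G(z)-P(z)$ is holomorphic on $\C\setminus\mathrm{supp}(\mu_{\bZ})$, real on $\R\setminus\mathrm{supp}(\mu_{\bZ})$, satisfies $T(z)\sim-\gamma z^{2}$ at infinity and $T^{2}=\Delta$, with $\Delta(0)=(\alpha-1)^{2}$ and $\Delta(-1)=\beta^{2}$. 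Because $T=\pm\sqrt{\Delta}$ is single-valued and holomorphic off a compact subset of $[0,\infty)$, every root of $\Delta$ of odd multiplicity must be real and lie in $\mathrm{supp}(\mu_{\bZ})\subseteq[0,\infty)$, so their number $k$ is even: $k\in\{0,2,4\}$. The case $k=0$ I would discard at once: then $T$ is a polynomial, $G$ is rational with poles only among $\{0,-1\}$, positivity and $\mathrm{supp}(\mu_{\bZ})\subseteq[0,\infty)$ force $\mu_{\bZ}=\delta_{0}$, and substituting $G(z)=1/z$ into \eqref{kummereq} yields $\alpha=0$, a contradiction. So $k\in\{2,4\}$.

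The hard part is excluding $k=4$, and this is precisely where the hypothesis $\beta\ge 0$ or $\alpha>1$ enters. If $k=4$, write $\Delta=\gamma^{2}\prod_{i=1}^{4}(z-\rho_{i})$ with $0\le\rho_{1}<\rho_{2}<\rho_{3}<\rho_{4}$ (the left-hand endpoints come from $\mathrm{supp}(\mu_{\bZ})\subseteq[0,\infty)$); then $\Delta>0$ on $(-\infty,\rho_{1})$, so $T$, being real, non-vanishing and $\sim-\gamma z^{2}$ there, is strictly negative on all of $(-\infty,\rho_{1})$. Since $-1\in(-\infty,\rho_{1})$ this forces $T(-1)<0$; but $G$ has no point mass at $-1$, so its residue there vanishes, i.e. $T(-1)=-P(-1)=\beta$, which is impossible when $\beta\ge 0$. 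If instead $\alpha>1$, then $\rho_{1}>0$ (otherwise $\Delta(0)=0=(\alpha-1)^{2}$), so $0\in(-\infty,\rho_{1})$, $T(0)<0$, and $T(0)=-|\alpha-1|=1-\alpha$ from $T(0)^{2}=(\alpha-1)^{2}$; the residue of $G$ at $0$ then equals $\tfrac12\bigl(P(0)+T(0)\bigr)=1-\alpha<0$, a negative point mass, again impossible. Hence $k=4$ cannot occur; I expect this sign bookkeeping at $z=0$ and $z=-1$ to be the only genuinely delicate point of the proof.

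It then remains to treat $k=2$, which should be routine. Here $\Delta=\gamma^{2}(z-a)(z-b)(z-r)^{2}$ with $0\le a<b$ and $r\in\R$, and, choosing $\sqrt{(z-a)(z-b)}\sim z$ at infinity, $T(z)=-\gamma(z-r)\sqrt{(z-a)(z-b)}$. Matching the most singular term of \eqref{kummereq} at $z=0$ gives $\mu_{\bZ}(\{0\})\in\{0,1-\alpha\}$, and the hypothesis selects the value: it is $0$ when $\alpha\ge 1$, while for $\alpha<1$ (so $\beta\ge 0$) the value $0$ would force $r>0$ and then the vanishing residue of $G$ at $-1$ would give $\beta<0$, so $\mu_{\bZ}(\{0\})=1-\alpha$. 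In all cases the residues of $G$ at $0$ and at $-1$ yield $\gamma r=-|\alpha-1|/\sqrt{ab}$ and $\beta=-\gamma(1+r)\sqrt{(1+a)(1+b)}$; eliminating $r$ is exactly the first equation of \eqref{freeKummerab}, and comparing the remaining coefficients of $\Delta=P^{2}-4z(z+1)(\gamma z+\delta)$ gives the second equation of \eqref{freeKummerab} together with the value of $\delta$. With $(a,b)$ the unique solution of \eqref{freeKummerab}, $T$ coincides with $2z(z+1)G_{\alpha,\beta,\gamma}(z)-P(z)$ read off from \eqref{CauchyStieltjesKummer} (using \eqref{kummerdensity1} when $\alpha=1$, and the shift identity $G_{\alpha,\beta,\gamma}(z)=\tfrac{1-\alpha}{z}+\alpha G_{1/\alpha,\beta/\alpha,\gamma/\alpha}(z)$ when $\alpha<1$), so $G=G_{\alpha,\beta,\gamma}$; since a compactly supported measure is determined by its Cauchy transform, $\bZ$ has the free-Kummer distribution $\mathcal{K}(\alpha,\beta,\gamma)$ and $\delta$ is uniquely determined by $\alpha,\beta,\gamma$.
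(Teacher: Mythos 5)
Your argument is correct in substance, but it is a genuinely different proof from the one in the paper. You solve \eqref{kummereq} explicitly, set $T=2z(z+1)G-P$ with $T^2=\Delta=P^2-4z(z+1)(\gamma z+\delta)$, and classify the odd-order roots of the discriminant: they must lie in $\mathrm{supp}(\mu_{\bZ})\subseteq[0,\infty)$, the cases of $0$ or $4$ such roots are excluded (the latter precisely by the sign bookkeeping $T(-1)=\beta$ and $\mathrm{Res}_{0}G=\tfrac12(P(0)+T(0))$ under the hypothesis $\beta\geq 0$ or $\alpha>1$), and in the remaining case $\Delta=\gamma^2(z-a)(z-b)(z-r)^2$ the residues at $0$ and $-1$ plus coefficient matching recover \eqref{freeKummerab}, the value of $\delta$, and finally $G=G_{\alpha,\beta,\gamma}$ via \eqref{CauchyStieltjesKummer}. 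I checked the delicate steps: the exclusion of four simple roots is sound (for the sign argument one should run it on $(-\infty,0)$, or note that $z(z+1)G$ extends holomorphically across an isolated atom at $0$, so $T$ is continuous and nonvanishing on $(-\infty,\rho_1)$), and matching the $z^3$-coefficient of $\Delta$ together with $\gamma r=-|\alpha-1|/\sqrt{ab}$ and $\beta=-\gamma(1+r)\sqrt{(1+a)(1+b)}$ does reproduce the second equation of \eqref{freeKummerab}. This is, however, exactly the discriminant/root-counting strategy (as in the free-GIG and earlier free-Kummer papers) that the present paper deliberately avoids: the paper's proof instead takes $H=G_{\alpha,\beta,\gamma}$, which satisfies the same quadratic with some $\delta_1$, subtracts the two equations to get $(G-H)\,f=\delta_1-\delta$ with $f(z)=z(z+1)(G+H)-p(z)$, and uses only that $f<0$ near $-\infty$, $f(-1)=\beta$ and $\limsup_{x\to0^-}f(x)\geq\alpha-1$ to produce a real zero of $f$ on $(-\infty,0)$, forcing $\delta=\delta_1$ and then $G=H$ since $f\not\equiv0$. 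The paper's route is much shorter, needs no case analysis at $\alpha=1$, and uses the hypothesis only through two boundary values of $f$; your route is longer but self-contained in that it reconstructs $G$ (and $\delta$) explicitly rather than comparing with the known transform.

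Two points in your $k=2$ case deserve explicit treatment if you write this up: the possibility that the even-order part of $\Delta$ is an irreducible quadratic must be dismissed (it follows from your own branch-point remark, since its complex roots would be simple), and the case $\alpha=1$, where $\gamma r=-|\alpha-1|/\sqrt{ab}$ degenerates because either $a=0$ or $r=0$, needs the separate identification with the shifted free-Poisson law or with the density \eqref{kummerdensity1}, which you only gesture at.
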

	\begin{proof}
		Let us denote 
		\begin{eqnarray*}
			p(z)&=&\gamma z(z+1)-(\alpha-1) (z+1)+\beta z\\
			&=&\gamma z^2-(\alpha-1-\beta-\gamma)z-(\alpha-1).
		\end{eqnarray*}
		Let also $H=H(z)$ be the Cauchy-Stieltjes transform of $\mathcal{K}(\alpha,\beta,\gamma)$. Then $H$ solves the equation \eqref{kummereq} with the same $\alpha,\beta,\gamma$ but possibly different $\delta$  say
		\begin{equation*}
			z(z+1)H^2(z)-p(z)H(z)+\gamma z+\delta_1=0.
		\end{equation*}
		Subtracting the above equation from \eqref{kummereq} gives the following identity
		\begin{equation}
			(G(z)-H(z))\left[z(z+1)(G(z)+H(z))-p(z)\right]=\delta_1-\delta,
		\end{equation}
		which holds for $z\in\C\setminus \R_+$ since both $G$ and $H$ are  Cauchy-Stieltjes transforms of a positive random variable. 
		
		Let us denote $f(z)=z(z+1)(G(z)+H(z))-p(z)$.  	Note that the  function $f(x)$ takes only real values for $x\in (-\infty,0)$. Since $p(x)$ is positive when $x\to-\infty$ and both $G$ and $H$ are negative on $(-\infty,0)$ we conclude that $f(x)$ is also negative when $x\to-\infty$.
		
		Since $f(-1)=-p(-1)=\beta$ and
		$$\limsup\limits_{x\to 0^-}f(x)=\limsup\limits_{x\to 0^-}\underbrace{x(x+1)(G(x)+H(x))}_{\textrm{positive on}\ (-1,0)}-p(0)\geq \alpha-1$$
		we conclude that when either $\beta\geq 0$ or $\alpha>1$ there exists $x_0\in(-\infty,0)$ such that $f(x_0)=0$. This implies	$\delta=\delta_1$ and consequently 
		\begin{equation}
			(G(z)-H(z))f(z)=0 \ \ \mbox{on}\ \C\setminus \R_+.\label{HalboG}
		\end{equation}
		Since $f(z)$ is non-zero analytic function we conclude that $G=H$.
	\end{proof}
	Using \eqref{M-to-G} we can reformulate the previous Proposition in terms of a moment transform.
	\begin{corollary}\label{KummmerMomentTransform}
		Assume the function $M=M(z)$ is a moment transform of a positive random variable $\bZ$ which satisfies the equation
		\begin{equation*}
			z(z+1)M^2(z)+((\alpha+1)z (z+1)-\gamma (z+1)-\beta z)M(z)+\alpha z(z+1)+(\delta-\gamma-\beta)z=0,
		\end{equation*}
		for some $\alpha,\gamma>0$, $\beta,\delta\in\R$. If either $\beta\geq 0$ or $\alpha>1$  then $\delta$ is uniquely determined by $\alpha,\beta,\gamma$  and $\bZ$ has the  free-Kummer distribution $\mathcal{K}(\alpha,\beta,\gamma)$. 
	\end{corollary}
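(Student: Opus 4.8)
The plan is to reduce this statement to Proposition \ref{kummerlemma} by a change of variables, exploiting the relation \eqref{M-to-G} between the moment transform and the Cauchy--Stieltjes transform of the same random variable.

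First I would let $G = G_{\bZ}$ be the Cauchy--Stieltjes transform of $\bZ$. By \eqref{M-to-G} we have $G(1/z) = z(1 + M(z))$ for $z$ in a punctured neighbourhood of $0$, or, writing $w = 1/z$, the relation $M(1/w) = w G(w) - 1$ valid for $|w|$ large (here we use that $\mu_{\bZ}$ is compactly supported, so $G$ is analytic at infinity). Substituting $z = 1/w$ and $M(z) = wG(w) - 1$ into the quadratic equation for $M$ in the statement and multiplying through by $w^2$ to clear denominators produces a polynomial identity in $G(w)$ and $w$.

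Next I would expand and regroup. The (routine) computation shows that every term of the resulting expression is divisible by $w$; cancelling this factor --- which is harmless since $w \neq 0$ on the relevant domain --- leaves precisely
\begin{equation*}
	w(w+1)G^2(w) - \bigl(\gamma w(w+1) - (\alpha-1)(w+1) + \beta w\bigr) G(w) + \gamma w + \delta = 0,
\end{equation*}
that is, equation \eqref{kummereq} with the same constants $\alpha, \beta, \gamma, \delta$. A priori this holds only for $|w|$ large, but since $G$ is analytic on $\C \setminus \R_+$ (as $\bZ$ is positive), uniqueness of analytic continuation extends the identity to all of $\C \setminus \R_+$. At this point Proposition \ref{kummerlemma} applies verbatim under the hypothesis $\beta \geq 0$ or $\alpha > 1$, giving that $\delta$ is determined by $\alpha, \beta, \gamma$ and that $\bZ$ has the free-Kummer distribution $\mathcal{K}(\alpha, \beta, \gamma)$.

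The only place demanding care is the algebraic bookkeeping: one must check that the common factor $w$ genuinely appears after expansion, so that the reduced equation coincides exactly with \eqref{kummereq} and not with some rescaling of it; the passage from a neighbourhood of infinity to the full cut plane is a standard remark.
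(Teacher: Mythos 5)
Your proposal is correct and follows the same route as the paper, which treats this corollary as an immediate reformulation of Proposition \ref{kummerlemma} via the relation \eqref{M-to-G}; your explicit substitution $M(1/w)=wG(w)-1$ indeed reduces the stated quadratic for $M$ to \eqref{kummereq} with the same constants, after cancelling the common factor $w$.
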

	\begin{lemma}\label{DistofY}
		Suppose \begin{equation}
			\gamma\od=z\muz+\alpha z,\label{DistofYEq1}
		\end{equation}
		for some $\gamma>0$ and $\alpha\in \R$.
		Then $\alpha>0$ and $\bY\sim \nu\left(\alpha,1/\gamma\right)$.
	\end{lemma}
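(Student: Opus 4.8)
The plan is to recover the $S$-transform of $\bY$ directly from the single functional equation \eqref{DistofYEq1} and to recognize it as the $S$-transform of a free-Poisson law.

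First I would rewrite \eqref{DistofYEq1} as $\gamma\od=z\bigl(\muz+\alpha\bigr)$ and compare the coefficients of $z$ on both sides near the origin. The expansion \eqref{rowzw2} gives $\od=\varphi(\bY)z+O(z^2)$, while the right-hand side equals $\alpha z+O(z^2)$ since $\muz=O(z)$; hence $\alpha=\gamma\,\varphi(\bY)$. As $\bY$ is positive and non-degenerate, $\varphi(\bY)>0$, so $\alpha>0$.

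Next I would eliminate $\od$ by means of \eqref{usefulidentity}, namely $\oj\od=z\,\eta_{\bU}(z)=\tfrac{z\muz}{\muz+1}$. Substituting $\od=z(\muz+\alpha)/\gamma$ and cancelling $z$ gives
\begin{equation*}
\oj=\frac{\gamma\,\muz}{\bigl(\muz+1\bigr)\bigl(\muz+\alpha\bigr)}.
\end{equation*}
By the subordination relation \eqref{subordiantion} we also have $\muz=M_{\bY}(\oj)$. Since $M_{\bU}'(0)=\varphi(\bU)=\varphi(\bR)\varphi(\bY)\neq 0$ (freeness and positivity), the holomorphic function $\muz$ maps a neighbourhood of $0$ onto a neighbourhood of $0$; writing $m$ for the variable $\muz$ (which therefore ranges over a full neighbourhood of $0$), the displayed formula for $\oj$ together with $\muz=M_{\bY}(\oj)$ yields $M_{\bY}\!\left(\tfrac{\gamma m}{(m+1)(m+\alpha)}\right)=m$ for all $m$ near $0$. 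Because $\alpha>0$, the map $m\mapsto \tfrac{\gamma m}{(m+1)(m+\alpha)}$ is holomorphic near $0$ and vanishes there, and $M_{\bY}$ is locally biholomorphic near $0$ (as $M_{\bY}'(0)=\varphi(\bY)\neq 0$); hence this map coincides with the local inverse $M^{\langle-1\rangle}_{\bY}$.

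Finally, by the definition of the $S$-transform,
\begin{equation*}
S_{\bY}(z)=\frac{z+1}{z}\,M^{\langle-1\rangle}_{\bY}(z)=\frac{z+1}{z}\cdot\frac{\gamma z}{(z+1)(z+\alpha)}=\frac{\gamma}{z+\alpha},
\end{equation*}
which is exactly the $S$-transform of the free-Poisson distribution $\nu(\alpha,1/\gamma)$. Since the $S$-transform uniquely determines the distribution of a positive random variable, $\bY\sim\nu(\alpha,1/\gamma)$. I expect the only delicate point to be the identification of $M^{\langle-1\rangle}_{\bY}$ with the explicit rational function: one must know that $\muz$ is locally surjective onto a neighbourhood of $0$ (so the functional identity holds on a full neighbourhood) and that $M_{\bY}$ admits a well-defined local inverse there, both of which follow from positivity and freeness; the remaining steps are routine power-series bookkeeping.
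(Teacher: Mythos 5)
Your proposal is correct and follows essentially the same route as the paper: extract $\alpha=\gamma\varphi(\bY)>0$ from the first-order coefficient at $z=0$, eliminate $\od$ via the identity $\oj\od=z\eta_{\bU}(z)$ to solve for $\oj$ as a rational function of $\muz$, then use $\muz=M_{\bY}(\oj)$ to read off $M^{\langle-1\rangle}_{\bY}$ and hence $S_{\bY}(z)=\gamma/(z+\alpha)$. Your extra care in justifying that $\muz$ covers a full neighbourhood of $0$ and that $M_{\bY}$ is locally invertible is a welcome refinement of a step the paper leaves implicit, but it is not a different argument.
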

	
	\begin{proof}
		By  differentiating \eqref{DistofYEq1} at $z=0$ we  get $\gamma\varphi(\bY)=\alpha>0$.
		From \eqref{usefulidentity} we get $z=\frac{M_{\bU}(z)+1}{M_{\bU}(z)}\oj\od$.
		Plugging this into \eqref{DistofYEq1} allows us to cancel out $\od$ and yields
		\begin{equation*}
			\gamma=(M_{\bU}(z)+1)\oj+\alpha\tfrac{M_{\bU}(z)+1}{M_{\bU}(z)}\oj.
		\end{equation*}
		This gives $$\oj=\frac{\muz}{(\muz+1)\left[1/\gamma\muz+\alpha/\gamma\right]}.$$
		Since $M_{\bU}(z)=M_{\bY}(\oj)$ this implies
		\begin{equation*}
			M^{\langle-1\rangle}_\bY(z)=\frac{z}{(z+1)\left[1/\gamma\cdot z+\alpha/\gamma\right]}.
		\end{equation*}
		Hence $$S_{\bY}(z)=\frac{1}{1/\gamma\cdot z+\alpha/\gamma}.$$
		Since $S$-transform uniquely determines the distribution we get   $\bY\sim \nu\left(\alpha,1/\gamma\right)$.
	\end{proof}
	
	From the above proof it is  easy to see that  if  $\bY\sim \nu\left(\alpha,1/\gamma\right)$ then \eqref{DistofYEq1} holds. We will use this fact in the proof of the following Lemma.
	
	\begin{lemma}\label{DistofX}
		Assume $M=\muz$ satisfies 
		\begin{equation}
			z(z+1)M^2+((\alpha+1)z (z+1)-\gamma (z+1)-\beta z)M+\alpha z(z+1)+\rho z=0,\label{DistofXeq1}
		\end{equation}
		for some $\alpha,\beta,\gamma>0$ then $\bU\sim\mathcal{K}(\alpha,\beta,\gamma)$. Moreover if  $\bY\sim \nu\left(\alpha,1/\gamma\right)$ then
		$\bX\sim \mathcal{K}(\beta,\alpha,\gamma)$.
	\end{lemma}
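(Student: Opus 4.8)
The first assertion should follow directly from Corollary \ref{KummmerMomentTransform}: the point is that $\bU=\bR^{\frac{1}{2}}\bY\bR^{\frac{1}{2}}$ is a positive random variable (as $\bR,\bY$ are positive and free), so $M=\muz$ is the moment transform of a positive variable, and \eqref{DistofXeq1} is literally the equation appearing in that corollary once one sets $\delta=\rho+\gamma+\beta$ (so that $\delta-\gamma-\beta=\rho$). Since $\alpha,\gamma>0$ and $\beta>0\ge0$, the hypothesis of the corollary holds, giving $\bU\sim\mathcal{K}(\alpha,\beta,\gamma)$.

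For the second assertion I would argue as follows. Assuming in addition $\bY\sim\nu(\alpha,1/\gamma)$, the remark following Lemma \ref{DistofY} gives $\gamma\od=z\muz+\alpha z$, hence $\od=z(\muz+\alpha)/\gamma$. Since $\bY$ is non-degenerate, \eqref{rowzw2} yields $\omega_2'(0)=\varphi(\bY)\ne0$, so $z\mapsto\od$ is biholomorphic near $0$; writing $w=\od$ and using the subordination identity $\muz=M_{\bR}(\od)$ from \eqref{subordiantion}, one inverts the relation above into $z=\gamma w/(M_{\bR}(w)+\alpha)$ (legitimate near $0$ since $M_{\bR}(0)+\alpha=\alpha>0$). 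Substituting this into \eqref{DistofXeq1} and clearing denominators should turn it into the quadratic relation
\begin{equation*}
	(w-1)M_{\bR}(w)^2+\bigl(\gamma w^2+(\alpha+1-\gamma-\beta)w-\alpha\bigr)M_{\bR}(w)+\gamma w^2+(\alpha+\rho)w=0
\end{equation*}
between $w$ and $M_{\bR}(w)$, valid near the origin.

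The next step is to transfer this to $\bX=\bR^{-1}-\bI$. Here I would use the elementary identity $M_{\bR}(w)=-w\,G_{\bX}(w-1)$, obtained from $M_{\bR}(w)=\varphi((\bI-w\bR)^{-1})-1$ together with $\bI-w\bR=(\bI+\bX)^{-1}\bigl((1-w)\bI+\bX\bigr)$. Putting $w=\zeta+1$ and $M_{\bR}(\zeta+1)=-(\zeta+1)G_{\bX}(\zeta)$ into the displayed quadratic and cancelling a factor $\zeta+1$ should give
\begin{equation*}
	\zeta(\zeta+1)G_{\bX}(\zeta)^2-\bigl(\gamma\zeta(\zeta+1)-(\beta-1)(\zeta+1)+\alpha\zeta\bigr)G_{\bX}(\zeta)+\gamma\zeta+\gamma+\alpha+\rho=0 ,
\end{equation*}
which is exactly equation \eqref{kummereq} with parameters $(\beta,\alpha,\gamma)$ in place of $(\alpha,\beta,\gamma)$ and $\delta=\gamma+\alpha+\rho$. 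Since the coefficients are polynomial and $G_{\bX}$ is analytic on $\C\setminus\R_+$, the identity extends to all of $\C\setminus\R_+$; as $\alpha>0$, the hypothesis of Proposition \ref{kummerlemma} (for these parameters) is satisfied, and we conclude that $G_{\bX}$ is the Cauchy--Stieltjes transform of $\mathcal{K}(\beta,\alpha,\gamma)$, i.e. $\bX\sim\mathcal{K}(\beta,\alpha,\gamma)$.

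The argument is essentially computational, and the main obstacle is the bookkeeping: checking that substituting $z=\gamma w/(M_{\bR}(w)+\alpha)$ into \eqref{DistofXeq1}, and then $w=\zeta+1$ with $M_{\bR}(w)=-(\zeta+1)G_{\bX}(\zeta)$, really does collapse the equation into the Kummer normal form above, together with verifying the auxiliary identity $M_{\bR}(w)=-w\,G_{\bX}(w-1)$ and the (routine) analytic-continuation steps needed to promote the various identities from a neighbourhood of the origin to all of $\C\setminus\R_+$.
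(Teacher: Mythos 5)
Your proposal is correct and follows essentially the same route as the paper: the first claim via Corollary \ref{KummmerMomentTransform} with $\delta=\rho+\gamma+\beta$, and the second by substituting \eqref{DistofYEq1} into \eqref{DistofXeq1}, passing to $M_{\bR}$ via $\muz=M_{\bR}(\od)$, converting with $M_{\bR}(w)=-w\,G_{\bX}(w-1)$, and invoking Proposition \ref{kummerlemma}. The bookkeeping you deferred does check out (clearing denominators produces an extra factor $M_{\bR}(w)+\alpha$, nonzero near $0$, which cancels), and both your intermediate quadratic and your final equation with $\delta=\gamma+\alpha+\rho$ agree with the paper's concluding display for $G_{\bX}$.
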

	\begin{proof}
		
		The first claim follows directly from Corollary \ref{KummmerMomentTransform}. For the second claim we note that the assumption $\bY\sim \nu\left(\alpha,1/\gamma\right)$ implies  identity \eqref{DistofYEq1} i.e. $$z=\frac{\gamma\od}{\muz+\alpha}.$$
		We plug this into \eqref{DistofXeq1} and after some easy algebra we get
		\begin{multline*}
			(\od-1)\muz^2+\left[\gamma\od(\od-1)-(\beta-1)\od+\alpha(\od-1)\right]\muz\\
			+\gamma\od(\od-1)+(\alpha+\rho)\od=0.
		\end{multline*}
		Since $\muz=M_{\bR}(\od)$ we conclude that $\mrz$ satisfies
		\begin{equation}
			(z-1)\mrz^2+\left[\gamma z(z-1)-(\beta-1)z+\alpha(z-1)\right]\mrz  +\gamma z(z-1)+(\alpha+\rho)z=0.\label{DistofXEq2}
		\end{equation}
		Now since $\bR=(\bI+\bX)^{-1}$ then $$\mrz=\varphi\left(z\bR(\bI-z\bR)^{-1}\right)=z\varphi\left((\bX+(1-z)\bI)^{-1}\right)=-zG_{\bX}(z-1).$$
		Thus we can rewrite the  equation \eqref{DistofXEq2} for $\mrz$ as an equation for $\gx$.
		After some algebra we get
		$$z(z+1)\gx^2-\left(\gamma z(z+1)-(\beta-1)(z+1)+\alpha z\right)\gx+\gamma z+\alpha+\gamma+\rho=0.$$
		The lemma follows now from Proposition \ref{kummerlemma}.
	\end{proof}
	
	\subsection{The first characterization}
	\begin{theorem}\label{th1}
		Let $\bX,\bY$ be free, non-degenerated and self-adjoint random variables. Let also $\bY$ be positive and $\bX$ strictly positive. If $\bU,\bV$ are defined by \eqref{defofuandv} and 
		\begin{equation*}	\varphi\left(\bV\mid \bU\right)=a\bI,  \end{equation*}
		\begin{equation*}\varphi\left(\bV^{-1}\mid \bU\right)=c\bI,\end{equation*}
		for some constants $a$ and $c$, then $ac>1$ and $\bX$ has free-Kummer distribution   $\mathcal{K}\left(\frac{ac}{ac-1},\frac{\lambda}{ac-1},\frac{c}{ac-1}\right)$ and $\bY$ has  free Poisson distribution $\nu\left(\frac{\lambda}{ac-1}, \frac{ac-1}{c}\right)$, where $\lambda$ is some  positive constant.
	\end{theorem}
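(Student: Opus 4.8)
The plan is to feed the two regression hypotheses into Lemmas \ref{lemmaeq1} and \ref{lemmaeq3}, eliminate the subordination function, and then read off the distributions from Lemmas \ref{DistofY} and \ref{DistofX}. Writing $M=\muz$, $\omega=\od$, $p=\varphi(\bX)$, $q=\varphi(\bX^{-1})$, the two conditions give, for all $z\in\C\setminus\R_+$,
\begin{equation*}
(\omega-1)M+\omega=\frac{z}{z+1}\bigl(aM+a-p\bigr)
\qquad\text{and}\qquad
\frac{z}{\omega-1}\bigl(M-q\bigr)=cz+c(z+1)M .
\end{equation*}
Before eliminating $\omega$, I would record the elementary consequences of applying $\varphi$ to the two regression identities: $a=\varphi(\bV)>0$, $c=\varphi(\bV^{-1})>0$, and, by the scalar Cauchy--Schwarz inequality $1=\varphi(\bI)^2\le\varphi(\bV)\varphi(\bV^{-1})$, that $ac\ge1$, with equality only if $\bV$ is a scalar. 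Since $\bX$ is non-degenerate and $(\bI+\bU)^{1/2}$ is invertible, $\bV$ cannot be scalar (otherwise $\bX=t(\bI+\bU)^{-1}$ would be a function of $\bU$, hence of $\bY$, contradicting freeness of the non-degenerate pair $\bX,\bY$); thus $ac>1$.

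The first equation is affine in $\omega$ and the second is affine in $(\omega-1)^{-1}$, so each determines $\omega-1$ as a rational function of $M$ and $z$. Equating the two expressions, clearing denominators, cancelling the factor $z$ (nonzero on $\C\setminus\R_+$) and dividing by $ac-1\neq0$ produces a quadratic identity
\begin{equation*}
z(z+1)M^2+\bigl((\alpha+1)z(z+1)-\gamma(z+1)-\beta z\bigr)M+\alpha z(z+1)+\rho z=0
\end{equation*}
of exactly the shape required by Corollary \ref{KummmerMomentTransform}; comparing coefficients expresses $\alpha,\beta,\gamma,\rho$ in terms of $a,c,p,q$, and the extreme coefficients pin down $\beta=\tfrac{ac}{ac-1}$ and $\gamma=\tfrac{c}{ac-1}$, both positive since $ac>1$ and $c>0$. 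By Corollary \ref{KummmerMomentTransform} (equivalently the first part of Lemma \ref{DistofX}) this already identifies $\bU\sim\mathcal{K}(\alpha,\beta,\gamma)$ once $\alpha>0$ is known.

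To get the distribution of $\bY$ I would return to the first equation solved for $\omega$, namely $\omega=\frac{M(z+1+za)+z(a-p)}{(M+1)(z+1)}$, and reduce it modulo the quadratic above: clearing the denominator gives a quadratic in $M$ whose sum with the quadratic identity is affine in $M$ and vanishes identically once the constants read off above are inserted (the $M$-coefficient because $\beta=\gamma a$). Hence $\gamma\omega=zM+\alpha z$ for all $z$, and Lemma \ref{DistofY} yields $\alpha>0$ together with $\bY\sim\nu(\alpha,1/\gamma)=\nu\!\left(\tfrac{\lambda}{ac-1},\tfrac{ac-1}{c}\right)$, where $\lambda:=(ac-1)\alpha=c\,\varphi(\bY)>0$. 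Feeding $\bY\sim\nu(\alpha,1/\gamma)$ into the second part of Lemma \ref{DistofX} then gives $\bX\sim\mathcal{K}(\beta,\alpha,\gamma)=\mathcal{K}\!\left(\tfrac{ac}{ac-1},\tfrac{\lambda}{ac-1},\tfrac{c}{ac-1}\right)$, as claimed; the appeal to Proposition \ref{kummerlemma} inside Lemma \ref{DistofX} is legitimate because the leading Kummer parameter $\tfrac{ac}{ac-1}$ exceeds $1$ and $\alpha>0$.

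\textbf{The main obstacle} I expect to be purely computational: carrying out the elimination of $\omega$ so that the resulting quadratic in $M$ lands precisely in the normal form of Corollary \ref{KummmerMomentTransform}, and then checking that this same quadratic forces the affine relation $\gamma\omega=zM+\alpha z$ with the advertised constants. One must also be careful that every factor cancelled along the way --- $z$, $M+1$, and $ac-1$ --- is genuinely nonzero on $\C\setminus\R_+$, so that no solution branch is created or lost; the non-vanishing of $ac-1$ is exactly the point at which the Cauchy--Schwarz inequality $ac>1$ is used.
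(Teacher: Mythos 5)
Your proposal is correct and follows essentially the paper's route: the same two functional equations from Lemmas \ref{lemmaeq1} and \ref{lemmaeq3}, elimination of $\od$, and then Lemmas \ref{DistofY} and \ref{DistofX}. The only real difference is the order of the algebra: the paper first takes $c\cdot(\text{first equation})-(\text{second equation})$, which cancels the bilinear terms $c(\od-1)\muz$ and yields the linear relation $c\,\od=(ac-1)z\muz+\lambda z$ with $\lambda=c(a-p)+q-c$ at once, and only then substitutes back to obtain the quadratic for $\muz$; you eliminate $\od$ outright to get the quadratic first and then recover the linear relation by reducing the first equation modulo it. Both work, and your quadratic is exactly the paper's, with $\alpha=\tfrac{\lambda}{ac-1}$, $\beta=\tfrac{ac}{ac-1}$, $\gamma=\tfrac{c}{ac-1}$, $\rho=\tfrac{c(p-a)}{ac-1}$; the paper's ordering merely avoids the extra ``vanishes identically'' verification that you flag as the main obstacle. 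One small slip: in your argument that $\bV$ cannot be a scalar, the step ``$\bX=t(\bI+\bU)^{-1}$ is a function of $\bU$, hence of $\bY$'' is not right, since $\bU$ involves both $\bX$ and $\bY$; the correct contradiction is that $\bV=t\bI$ forces $\bY=(\bI+\bX)^{1/2}\left(t\bX^{-1}-\bI\right)(\bI+\bX)^{1/2}$ to lie in the unital algebra generated by $\bX$, and then freeness of $\bX,\bY$ together with faithfulness of $\varphi$ makes $\bY$ degenerate, contrary to hypothesis (the paper itself only asserts non-degeneracy of $\bV$ at this point).
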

	
	\begin{proof}
		We have  $a=\varphi(\bV)>0$, $c=\varphi(\bV^{-1})>0$ then $ac>1$ by Cauchy-Schwarz inequality. (The inequality is strict because $V$ is non-degenerated and $\varphi$ is faithful.)
		Under the  assumptions of   Theorem \ref{th1}, Lemmas \ref{lemmaeq1} and \ref{lemmaeq3} imply the following system of equations
		\begin{equation} \label{ch1sys1} \left\{\begin{array}{rcl}
				(\od-1)M_{\bU}(z)+\od&=&\frac{z}{z+1}\left(a M_\bU(z)+a-p\right),\\
				\frac{cz}{z+1}(\od-1)+c(\od-1)M_{\bU}(z)&=&\frac{z}{z+1}\left(M_{\bU}(z)-q\right),
			\end{array} \right.
		\end{equation}
		where we denoted $p=\varphi(\bX), q=\varphi(\bX^{-1})$.
		Multiplying the first equation by $c$ and subtracting   second one from it gives
		\begin{equation*}
			c\frac{z+\od}{z+1}=(ac-1)\frac{z}{z+1}M_\bU(z)+(\lambda+c)\frac{z}{z+1},
		\end{equation*}
		where $\lambda=c(a-p)+q-c$. Hence
		\begin{equation}
			c\od=(ac-1)zM_\bU(z)+\lambda z.	\label{th1eq1}
		\end{equation}
		Lemma \ref{DistofY} implies that $\lambda>0$ and  $\bY\sim \nu\left(\frac{\lambda}{ac-1}, \frac{ac-1}{c}\right)$.
		
		We plug $\od$ from \eqref{th1eq1} into the second equation of \eqref{ch1sys1} to get the following equation
		\begin{equation*}
			(ac-1)z(z+1)M^2_{\bU}(z)+\left[(ac-1+\lambda)z(z+1)-c(z+1)-acz\right]M_{\bU}(z)+\lambda z(z+1)+c(p-a)z=0.
		\end{equation*}
		We recognize here the equation \eqref{DistofXeq1} from Lemma \ref{DistofX} with parameters $\alpha=\frac{\lambda}{ac-1}$, $\beta=\frac{ac}{ac-1}$,  $\gamma=\frac{c}{ac-1}$ and $\rho=\frac{c(p-a)}{ac-1}$. Clearly $\alpha,\beta,\gamma>0$. 
		Lemma \ref{DistofX} implies	$\bX\sim \mathcal{K}\left(\frac{ac}{ac-1},\frac{\lambda}{ac-1},\frac{c}{ac-1}\right)$.
		
	\end{proof}
	\begin{remark}\label{Uwagaoalfie}
		Note that the assumption that $\bX$ is invertible is reflected by having $\alpha=\frac{ac}{ac-1}>1$. This is also visible in Theorems \ref{th2}. In Theorem  \ref{th3} where $\bX$ is not necessarily invertible, we have $\alpha=\frac{a^2}{b-a^2}$ which can  a priori be either greater, equal, or less than $1$.
	\end{remark}
	\begin{remark}
		The characterization of free-Kummer and free-Poisson distributions from Theorem \ref{th1} was already  proved in \cite{PiliszekFreeKummer} using different method. The proof in that article relies on determining the distribution of $\bY$ and $\bU$ first and then using HV property (Theorem \ref{FreeHVproperty}) to determine the distribution of $\bX$. Therefore this argument requires  to assume that $\bY$ or equivalently $\bU$ is invertible. This assumption  is not needed in the proof of Theorem \ref{th1}.
	\end{remark}
	
	\subsection{The second characterization}
	
	\begin{theorem}\label{th2}
		Let $\bX,\bY$ be free, non-degenerated and self-adjoint random variables. Let also $\bY$ be positive and $\bX$ strictly positive. If $\bU,\bV$ are defined by \eqref{defofuandv} and 
		\begin{equation*}	\varphi\left(\bV^{-1}\mid \bU\right)=c\bI,  \end{equation*}
		\begin{equation*}\varphi\left(\bV^{-2}\mid \bU\right)=d\bI,\end{equation*}
		for some constants $c$ and $d$, then $d>c^2$ and $\bX$ has free-Kummer distribution  $\mathcal{K}\left(\frac{d}{d-c^2},\frac{\lambda}{d-c^2},\frac{c^3}{d-c^2}\right)$ and $\bY$ has  free Poisson distribution $\nu\left(\frac{\lambda}{d-c^2},\frac{d-c^2}{c^3}\right)$, where $\lambda$ is some  positive constant.
	\end{theorem}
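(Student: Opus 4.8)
The plan is to follow the template of the proof of Theorem~\ref{th1}: extract one functional equation from each regression condition, combine them to isolate a linear relation between the subordination function $\od$ and $\muz$, feed that into Lemma~\ref{DistofY} to identify $\bY$, and then substitute back to obtain a quadratic equation for $\muz$ to which Lemma~\ref{DistofX} applies. First I would apply the state $\varphi$ to the two hypotheses (using $\varphi\circ\varphi(\cdot\mid\bU)=\varphi$) to get $c=\varphi(\bV^{-1})>0$ and $d=\varphi(\bV^{-2})>0$; the Cauchy--Schwarz inequality $\varphi(\bV^{-1})^{2}\le\varphi(\bV^{-2})\varphi(\bI)$ gives $c^{2}\le d$, the inequality being strict because $\bV$ (equivalently $\bX$) is non-degenerate and $\varphi$ is faithful. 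Then Lemma~\ref{lemmaeq3} applied to $\varphi(\bV^{-1}\mid\bU)=c\bI$ yields identity~\eqref{HVeq3}, and Lemma~\ref{lemmaeq4} applied to $\varphi(\bV^{-2}\mid\bU)=d\bI$ yields $k(z,-1)=d+d\bigl(1+\tfrac1z\bigr)\muz$, where $k(z,w)$ is the expression computed in Proposition~\ref{Funkcjak} for $g(\bR)=\bR(\bI-\bR)^{-1}=\bX^{-1}$.

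The heart of the matter is to make the second identity tractable using the first. From \eqref{HVeq3} one reads off $\frac{\muz-\varphi(\bX^{-1})}{\od-1}=\frac{c}{z}\bigl(z+(z+1)\muz\bigr)$ and, specializing \eqref{HVeq3} at $z=-1$, $\frac{M_\bU(-1)-\varphi(\bX^{-1})}{\omega_2(-1)-1}=c$; hence the two ``bracket'' quantities appearing in the Proposition~\ref{Funkcjak} formulas for $k_1(z,-1)$ and $k_2(z,-1)$ are completely controlled, their difference being $\frac{c(z+1)\muz}{z}$, and the specialization constants are linked by $M_\bU(-1)-\varphi(\bX^{-1})=c(\omega_2(-1)-1)$, leaving only a single unknown constant from the $w=-1$ evaluation. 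Substituting all of this, together with $\od-1=\frac{z(\muz-\varphi(\bX^{-1}))}{c(z+(z+1)\muz)}$ from \eqref{HVeq3}, into $k(z,-1)=d+d(1+\tfrac1z)\muz$ and clearing denominators, I expect a large cancellation; combining the outcome with \eqref{HVeq3} once more exactly as in the proof of Theorem~\ref{th1} (one linear combination killing the cross terms) should leave
\[
\gamma\,\od=z\muz+\alpha z,\qquad \gamma=\frac{c^{3}}{d-c^{2}},
\]
for a real constant $\alpha$, which will be a combination of $c,d,\varphi(\bY),\varphi(\bX^{-1}),\varphi(\bX^{-2})$ and which can, if convenient, be pinned down by comparing the power-series coefficients at $z=0$.

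By Lemma~\ref{DistofY} this relation forces $\alpha>0$ and $\bY\sim\nu(\alpha,1/\gamma)$; writing $\alpha=\lambda/(d-c^{2})$ with $\lambda>0$ this is exactly $\nu\bigl(\tfrac{\lambda}{d-c^{2}},\tfrac{d-c^{2}}{c^{3}}\bigr)$. Finally, following the proof of Lemma~\ref{DistofX} verbatim, I would use the equivalent form $z=\gamma\od/(\muz+\alpha)$ of the last display inside \eqref{HVeq3}; after clearing denominators this produces a quadratic equation for $\muz$ matching \eqref{DistofXeq1} with $\alpha=\tfrac{\lambda}{d-c^{2}}$, $\beta=\tfrac{d}{d-c^{2}}$, $\gamma=\tfrac{c^{3}}{d-c^{2}}$, and these are all strictly positive (indeed $\beta>1$). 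Lemma~\ref{DistofX} then delivers $\bX\sim\mathcal{K}\bigl(\tfrac{d}{d-c^{2}},\tfrac{\lambda}{d-c^{2}},\tfrac{c^{3}}{d-c^{2}}\bigr)$, which is the claim.

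The hard part will be the middle step: verifying that the unwieldy Proposition~\ref{Funkcjak} expression for $k(z,-1)$, after the substitutions from \eqref{HVeq3}, really does collapse — and that the a priori unknown quantities $\omega_2(-1)$, $M_\bU(-1)$ and $\varphi(\bX^{-2})$ either cancel outright or recombine into the single positive constant $\lambda$, so that the clean linear relation above survives. A minor additional point is the positivity of $\alpha,\beta,\gamma$ needed for Proposition~\ref{kummerlemma} (used inside Lemma~\ref{DistofX}): $\beta=d/(d-c^{2})>1$ and $\gamma=c^{3}/(d-c^{2})>0$ are immediate from $d>c^{2}$, while $\alpha>0$ comes from Lemma~\ref{DistofY}.
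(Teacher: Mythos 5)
Your strategy coincides with the paper's own proof: the same two regression lemmas (Lemmas \ref{lemmaeq3} and \ref{lemmaeq4}), Proposition \ref{Funkcjak} with $g(\bR)=\bX^{-1}$, Cauchy--Schwarz for $d>c^{2}$, then Lemma \ref{DistofY} to identify $\bY$ and Lemma \ref{DistofX} with exactly the parameters $\alpha=\tfrac{\lambda}{d-c^{2}}$, $\beta=\tfrac{d}{d-c^{2}}$, $\gamma=\tfrac{c^{3}}{d-c^{2}}$. The preparatory identities you extract are also correct: $\tfrac{\muz-a}{\od-1}=c+\tfrac{c(z+1)\muz}{z}$ from \eqref{HVeq3}, and at $z=-1$ the link $K-a=c(p-1)$ with $K=M_{\bU}(-1)$, $p=\omega_2(-1)$, $a=\varphi(\bX^{-1})$.

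The gap is that the decisive step --- that $k(z,-1)=d+d\bigl(1+\tfrac{1}{z}\bigr)\muz$, combined with \eqref{HVeq3}, really collapses to a linear relation $c^{3}\od=(d-c^{2})z\muz+\lambda z$ --- is only announced (``I expect a large cancellation''), and you flag it yourself as unverified; this is the core of the argument, not a routine simplification. Two concrete ingredients are needed, and one of them is misplaced in your plan. First, the constant $p=\omega_2(-1)$ does not cancel on its own: it must be computed \emph{before} the collapse, from the $z\to0$ limit of the second regression identity, which gives $d\varphi(\bU)+d=k_1(0,-1)$ and hence $p-1=\tfrac{c(c-a-b)}{ad}$ with $b=\varphi(\bX^{-2})$, equivalently $\tfrac{a+b-c}{p-1}=-\tfrac{ad}{c}$; it is precisely this value that lets the constant term of $k_1(z,-1)$ merge with the right-hand side into $\tfrac{d}{c}\tfrac{\muz}{\od-1}$ (via \eqref{HVeq3}). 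In your plan the $z\to0$ comparison appears only as an optional afterthought for pinning down $\alpha$, so the unknowns $p$ and $b$ would remain in the way of the cancellation. Second, after that absorption one still has to feed \eqref{HVeq3} back in the rewritten form $c(z+1)(\od-1)\muz=z\bigl[(\muz-K)-c(\od-p)\bigr]$ inside the $k_2$-term; only then do the factors $(\od-1)$, $(\od-p)$ and $(\muz-K)$ cancel, leaving $cz+\tfrac{c^{2}(\od+pz)}{\muz-K}=\tfrac{d}{c}z$, i.e.\ \eqref{keyequation2} with $\lambda=ac^{2}-c^{3}-dK$, whose positivity then comes from Lemma \ref{DistofY}. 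Once this computation is actually carried out, the remainder of your argument (eliminating $\od$ and invoking Lemma \ref{DistofX}) is exactly the paper's and is fine.
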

	\begin{proof}
		Let us denote  $a=\varphi(\bX^{-1})$, $b=\varphi(\bX^{-2})$, $K=M_\bU(-1)$, $p=\omega_2(-1)$.\newline
		Under the  assumptions of   Theorem \ref{th2}, Lemmas \ref{lemmaeq3} and \ref{lemmaeq4} imply the following equations
		\begin{align}
			&\frac{M_{\bU}(z)-a}{\od-1}=c+c\left(1+\frac{1}{z}\right)M_{\bU}(z),\label{Ch2Eq1}\\
			&k(z,-1)=d\left(1+\frac{1}{z}\right)\muz+d,\label{Ch2Eq2}
		\end{align}
		where $k(z,w)=k_1(z,w)+k_2(z,w)$ is a function from Proposition \ref{Funkcjak}. Explicitly we have  
		\begin{eqnarray*}
			k_1(z,-1)&=&\frac{a+b}{(\od-1)(p-1)}+\frac{1}{\od-p}\left(\od\tfrac{\muz-a}{(\od-1)^2}-p\tfrac{K-a}{(p-1)^2}\right),\\
			k_2(z,-1)&=&\frac{-\left(\od+pz\right)}{\left(\muz-K\right)(\od-p)(z+1)}\left(\frac{\muz-a}{\od-1}-\tfrac{K-a}{p-1}\right)^2.
		\end{eqnarray*}
		
		The inequality $d>c^2$ follows from Cauchy-Schwarz inequality, non-degeneracy of $\bV$ and faithfulness of $\varphi$. The constants $K$ and $p$  can be expressed in terms of $a,b,c,d$. From \eqref{varphiofU} we get $\varphi(\bU)=\frac{a}{c}-1$ and \eqref{Ch2Eq1} implies that  $\frac{K-a}{p-1}=c$ i.e. $K=a+c(p-1)$. Taking the limit $z\to 0$ in \eqref{Ch2Eq2} yields
		\begin{equation*}
			d\varphi(\bU)+d=k_1(0,-1)=-\frac{a+b}{p-1}+\frac{K-a}{(p-1)^2}=\frac{c-a-b}{p-1}.
		\end{equation*}
		Consequently
		\begin{equation}
			p-1=\frac{c(c-a-b)}{ad},\ \ \ K=a+c(p-1).\label{Ch2constants}
		\end{equation}
		
		The next step it to simplify \eqref{Ch2Eq2}. From \eqref{Ch2Eq1} and \eqref{Ch2constants} we get
		\begin{equation*}
			k_2(z,-1)=\frac{-c^2\left(\od+pz\right)(z+1)\musz}{\left(\muz-K\right)(\od-p)z^2},
		\end{equation*}
		and 
		\begin{equation*}
			k_1(z,-1)=c\left(1+\frac{1}{z}\right)\muz\frac{\od}{(\od-1)(\od-p)}+\frac{a+b-c}{p-1}\frac{1}{\od-1}.
		\end{equation*}
		Hence \eqref{Ch2Eq2} is equivalent to
		\begin{equation*}
			c\left(1+\frac{1}{z}\right)\muz\frac{\od}{(\od-1)(\od-p)}+\frac{a+b-c}{p-1}\frac{1}{\od-1}+k_2(z,-1)=d\left(1+\frac{1}{z}\right)\muz+d.
		\end{equation*}
		By \eqref{Ch2constants}  we have $\frac{a+b-c}{p-1}=-\frac{ad}{c}$ and this  allows us to rewrite the last equation as
		\begin{equation}
			c\left(1+\frac{1}{z}\right)\muz\frac{\od}{(\od-1)(\od-p)}+k_2(z,-1)=d\left(1+\frac{1}{z}\right)\muz+d+\frac{ad}{c}\frac{1}{\od-1}.\label{Ch2Eq3}
		\end{equation}
		Using \eqref{Ch2Eq1} we see that the right hand side of \eqref{Ch2Eq3}  equals
		$$\frac{d}{c}\frac{\muz-a}{\od-1}+\frac{ad}{c}\frac{1}{\od-1}=\frac{d}{c}\frac{\muz}{\od-1}.$$
		Consequently \eqref{Ch2Eq3} takes the following equivalent form
		\begin{equation*}
			c\left(1+\frac{1}{z}\right)\muz\frac{\od}{(\od-1)(\od-p)}-\frac{c^2\left(\od+pz\right)(z+1)\musz}{\left(\muz-K\right)(\od-p)z^2}=\frac{d}{c}\frac{\muz}{\od-1}.
		\end{equation*}
		We cancel out $\muz$ and multiply both sides by $z(\od-1)(\od-p)$ which gives
		\begin{equation}
			c(z+1)\od-\frac{c^2(\od+pz)(z+1)(\od-1)\muz}{(\muz-K)z}=\frac{d}{c}z(\od-p).\label{Ch2Eq4}
		\end{equation}
		We rewrite  \eqref{Ch2Eq1} as
		\begin{eqnarray*}
			c(z+1)(\od-1)\muz&=&z(\muz-a)-cz(\od-1)\\
			&=&z\left[(\muz-K)-c(\od-p)\right],
		\end{eqnarray*}
		and apply it to the middle term of \eqref{Ch2Eq4} to get
		\begin{eqnarray*}
			c(z+1)\od-c(\od+zp)+\frac{c^2(\od+pz)(\od-p)}{\muz-K}=\frac{d}{c}z(\od-p).
		\end{eqnarray*}
		Since $c(z+1)\od-c(\od+zp)=cz(\od-p)$ we get
		\begin{equation*}
			cz+\frac{c^2(\od+pz)}{\muz-K}=\frac{d}{c}z.
		\end{equation*}
		After some simple algebra we arrive at
		\begin{equation}
			c^3\od=(d-c^2)z\muz+\lambda z,\label{keyequation2}
		\end{equation}
		where
		\begin{equation}
			\lambda=c^2K-c^3p-dK=ac^2-c^3-dK, \label{Th2Lambda}
		\end{equation} since by \eqref{Ch2constants} $cp=K+c-a$. Lemma \ref{DistofY} implies $\lambda>0$ and 
		$\bY\sim \nu\left(\frac{\lambda}{d-c^2},\frac{d-c^2}{c^3}\right)$.
		
		Combining \eqref{Ch2Eq1} with \eqref{keyequation2} gives the following equation for $\muz$:
		\begin{multline*}
			(d-c^2)z(z+1)M^2_\bU(z)+\left[(\lambda+d-c^2)z(z+1)-c^3(1+z)-dz\right]\muz+\\
			+\lambda z(z+1)+(c^2a-c^3-\lambda)z=0.
		\end{multline*}
		We recognize here the equation \eqref{DistofXeq1} from Lemma \ref{DistofX} with parameters
		$\alpha=\frac{\lambda}{d-c^2}$, $\beta=\frac{d}{d-c^2}$, $\gamma=\frac{c^3}{d-c^2}$ and $\rho=\frac{c^2a-c^3-\lambda}{d-c^2}$. Obviously $\alpha,\beta,\gamma>0$ 
		so  Lemma \ref{DistofX} implies that $\bX\sim \mathcal{K}\left(\tfrac{d}{d-c^2},\tfrac{\lambda}{d-c^2},\tfrac{c^3}{d-c^2}\right)$.
	\end{proof}
	\subsection{The third characterization}
	\begin{theorem}\label{th3}
		Let $\bX,\bY$ be free, positive, non-degenerated and self-adjoint random variables.  If $\bU,\bV$ are defined by \eqref{defofuandv} and 
		\begin{equation*}	\varphi\left(\bV\mid \bU\right)=a\bI,  \end{equation*}
		\begin{equation*}\varphi\left(\bV^2\mid \bU\right)=b\bI,\end{equation*}
		for some constants $a$ and $b$, then $b>a^2$ and $\bX$ has free-Kummer distribution   $\mathcal{K}\left(\frac{a^2}{b-a^2},\frac{a\lambda}{b-a^2},\frac{a}{b-a^2}\right)$ and $\bY$ has  free Poisson distribution $\nu\left(\frac{\lambda a}{b-a^2},\frac{b-a^2}{a}\right)$, where $\lambda$ is some  positive constant.
	\end{theorem}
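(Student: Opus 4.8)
The proof runs parallel to those of Theorems \ref{th1} and \ref{th2}. First, positivity and non-degeneracy of $\bV$ together with faithfulness of $\varphi$ give $a=\varphi(\bV)>0$ and $b=\varphi(\bV^2)>0$, and the Cauchy--Schwarz inequality $\varphi(\bV)^2\le\varphi(\bI)\varphi(\bV^2)$ is strict since $\bV$ is not a scalar multiple of $\bI$; hence $b>a^2$.

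I would then feed the two regression conditions into Lemma \ref{lemmaeq1} and Lemma \ref{lemmaeq2} to obtain the functional equations \eqref{HVeq1} and \eqref{HVeq2}, valid on $\C\setminus\R_+$. Put $\bR=(\bI+\bX)^{-1}$, $p=\varphi(\bX)$, $q=\varphi(\bY)$, and keep $\varphi(\bX^2)$ and $\varphi((\bI+\bU)^{-1})$ as the remaining scalar unknowns. The crucial point is that the expression $D(z)=(\od-1)\muz+\od$ pervading \eqref{HVeq2} is exactly the left-hand side of \eqref{HVeq1}; thus \eqref{HVeq1} lets me substitute $D(z)=\tfrac{z}{z+1}(a\muz+a-p)$ into \eqref{HVeq2}, and it also lets me write $\od$ as an explicit rational function of $z$ and $\muz$. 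The auxiliary scalars are fixed using the low-order coefficients of \eqref{HVeq1}--\eqref{HVeq2} together with traciality and the freeness of $\bX$ and $\bY$: applying $\varphi$ to $\varphi(\bX\mid\bU)=a(\bI+\bU)^{-1}$ gives $p=a\varphi((\bI+\bU)^{-1})$, and applying $\varphi$ to $\varphi(\bX(\bI+\bU)\bX\mid\bU)=b(\bI+\bU)^{-1}$, after using $\varphi(\bX\bU\bX)=q\varphi(\bR\bX^2)$ (traciality and freeness), gives $\varphi(\bX^2)+q\varphi(\bR\bX^2)=b\varphi((\bI+\bU)^{-1})$.

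After this substitution, clearing denominators and cancelling the spurious higher-order terms in $\muz$ — the same mechanism that produced \eqref{th1eq1} and \eqref{keyequation2} in the earlier proofs — I expect the combined equation to collapse to a key identity of the form
\begin{equation*}
	a\od=(b-a^2)z\muz+a\lambda z
\end{equation*}
for a constant $\lambda$; differentiating at $z=0$ identifies $\lambda=\varphi(\bY)>0$. By Lemma \ref{DistofY} (applied with $\gamma=\tfrac{a}{b-a^2}$ and $\alpha=\tfrac{a\lambda}{b-a^2}$) this yields $\bY\sim\nu\!\left(\tfrac{\lambda a}{b-a^2},\tfrac{b-a^2}{a}\right)$. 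Substituting $\od=\tfrac{b-a^2}{a}z\muz+\lambda z$ back into \eqref{HVeq1} and simplifying then produces a quadratic equation for $M=\muz$ which, after multiplication by $b-a^2$, is precisely \eqref{DistofXeq1} of Lemma \ref{DistofX} with $\alpha=\tfrac{a\lambda}{b-a^2}$, $\beta=\tfrac{a^2}{b-a^2}$, $\gamma=\tfrac{a}{b-a^2}$ (all positive) and some $\rho\in\R$. Since $\bY\sim\nu(\alpha,1/\gamma)$ has already been established, the second part of Lemma \ref{DistofX} gives $\bX\sim\mathcal{K}\!\left(\tfrac{a^2}{b-a^2},\tfrac{a\lambda}{b-a^2},\tfrac{a}{b-a^2}\right)$, which is the claim.

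The main obstacle is the middle step. Equation \eqref{HVeq2} is considerably bulkier than \eqref{HVeq1} or \eqref{HVeq3}, and after substituting $D(z)$ it carries factors of $\muz$ in the denominators of the $\tfrac{D(z)^2}{z\muz}$ and $\varphi(\bY)\tfrac{D(z)}{\muz}$ terms; clearing these and verifying that all genuinely higher-degree contributions in $\muz$ vanish is what forces the precise values of the auxiliary moments, and it is where essentially all of the computational work lies. Once the key equation is reached, the remaining steps are routine given Lemmas \ref{DistofY} and \ref{DistofX}.
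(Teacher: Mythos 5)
Your strategy coincides with the paper's own proof: get $b>a^2$ from Cauchy--Schwarz, invoke Lemmas \ref{lemmaeq1} and \ref{lemmaeq2}, pin down the auxiliary moments via freeness and traciality, reduce to a linear relation between $\od$ and $z\muz$, and then conclude with Lemma \ref{DistofY} followed by Lemma \ref{DistofX}. The identity you predict, $a\,\od=(b-a^2)z\muz+a\lambda z$ with $\lambda=\varphi(\bY)$, is exactly the paper's \eqref{ch3eq4}, and the parameters you read off ($\alpha=\tfrac{a\lambda}{b-a^2}$, $\beta=\tfrac{a^2}{b-a^2}$, $\gamma=\tfrac{a}{b-a^2}$) are the ones the paper obtains.

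The genuine gap is that the pivotal step --- the reduction of \eqref{HVeq2} to this linear relation --- is only announced (``I expect the combined equation to collapse to\ldots''), not carried out, and you yourself flag it as the place where all the work lies; as written, the argument is incomplete precisely at its core. For the record, the paper executes it as follows. Besides $\varphi((\bI+\bU)^{-1})=p/a$, one needs the scalar identity $\tfrac{bp}{a}=\varphi(\bX^2)+\lambda p-(a-p)$ (equation \eqref{Ch3Stale}); your relation $\varphi(\bX^2)+\varphi(\bY)\varphi(\bR\bX^2)=bp/a$ is only half of it, since you must still eliminate $\varphi(\bR)$ using $a=p+\lambda(1-\varphi(\bR))$, which comes from applying $\varphi$ to $\varphi(\bV\mid\bU)=a\bI$ (or, as you suggest, from the first-order coefficient of \eqref{HVeq1}). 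With this, \eqref{HVeq2} becomes \eqref{ch3Eq3}; then one rewrites \eqref{ch3Eq1} in the two forms $z(a-p)=(z+1)D(z)-az\muz$ and $z(D(z)+p)=az\muz-D(z)$, substitutes, and finds that the left-hand side of \eqref{ch3Eq3} equals $\left(\tfrac{\od}{\muz}+az\right)D(z)$, so the equation reduces after cancelling the nonzero analytic factor $D(z)$ --- it is this cancellation, rather than the vanishing of higher powers of $\muz$ you anticipated, that does the work --- and one lands exactly on $\od=\tfrac{b-a^2}{a}z\muz+\lambda z$. So your plan is correct and the gap is fillable exactly as you expected, but the proposal itself does not yet prove the crucial identity.
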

	\begin{proof}
		The inequality $b>a^2$ follows from Cauchy-Schwarz inequality.
		Under the assumptions of the Theorem \ref{th3}, Lemmas \ref{lemmaeq1} and \ref{lemmaeq2} imply the following identities
		\begin{equation}
			D(z):=(\od-1)\muz+\od=\frac{z}{z+1}\left(a M_\bU(z)+a-p\right),\label{ch3Eq1}
		\end{equation}
		\begin{multline}
			q+\lambda p+p\od+(\od-1)D(z)+\frac{D(z)^2}{z\muz}=\\=\frac{bp}{a(z+1)}+\frac{bz}{z+1}(\muz+1)+\lambda\frac{D(z)}{\muz}.\label{ch3Eq2}
		\end{multline}
		where we denote $p=\varphi(\bX),q=\varphi(\bX^2),\lambda=\varphi(\bY)>0$. We also used the fact that \eqref{r1} implies that $\varphi((\bI+\bU)^{-1})=\frac{p}{a}$.
		
		Before we continue we will  show that  $q$ can be expressed by $a,b,p$ and $\lambda$. From $\eqref{r1}$ and freeness of $\bR$ and $\bY$ we obtain that 
		\begin{equation}
			a=\varphi(\bX)+\varphi(\bX\bU)=p+\varphi((\bI-\bR)\bY)=p+\lambda(1-\varphi(\bR)).\label{ch3Eq5}
		\end{equation}
		Similarly \eqref{r3} implies
		$$\frac{bp}{a}=\varphi(\bX^2)+\varphi(\bX^2\bU)=q+\varphi((\bR^{-1}-\bI)(\bI-\bR)\bY)=q+\lambda(p-1+\varphi(\bR)).$$
		Combining both identities we obtain
		\begin{equation}
			\frac{bp}{a}=q+\lambda p-(a-p).\label{Ch3Stale}
		\end{equation}
		The next step is to simplify \eqref{ch3Eq2}. We start by noting that
		\begin{equation*}
			\frac{bp}{a(z+1)}+\frac{bz}{z+1}(\muz+1)=\frac{b}{a}\left[\frac{p+pz}{z+1}+\frac{z}{z+1}(a\muz+a-p)\right]=\frac{bp}{a}+\frac{b}{a}D(z).
		\end{equation*}
		The above identity together with \eqref{Ch3Stale} allows us to write \eqref{ch3Eq2} in the following equivalent  form 
		\begin{equation}
			z(a-p)+pz\od+z(\od-1)D(z)+\frac{D(z)^2}{\muz}=\frac{b}{a}zD(z)+\lambda z\frac{D(z)}{\muz}.\label{ch3Eq3}
		\end{equation}
		Now we note that \eqref{ch3Eq1} can be written in the following two forms
		\begin{equation*}
			z(a-p)=(z+1)D(z)-az\muz\ \ \mbox{or}\ \ z(D(z)+p)=az\muz-D(z).
		\end{equation*}
		Consequently the left hand side of \eqref{ch3Eq3} is equal
		\begin{eqnarray*}
			&&z(a-p)+pz\od+z(\od-1)D(z)+D(z)\left(\od-1+\frac{\od}{\muz}\right)\\
			&=&z(a-p)+z(p+D(z))\od-zD(z)+(\od-1)D(z)+\frac{\od}{\muz}D(z)\\
			&=&(z+1)D(z)-az\muz+(az\muz-D(z))\od-zD(z)+(\od-1)D(z)+\frac{\od}{\muz}D(z)\\
			&=&\frac{\od}{\muz}D(z)+az\left[\od(\muz+1)-\muz\right]=\frac{\od}{\muz}D(z)+azD(z).
		\end{eqnarray*}
		and thus \eqref{ch3Eq3} is equivalent to
		\begin{equation*}
			\frac{\od}{\muz}D(z)+azD(z)=\frac{b}{a}zD(z)+\lambda z\frac{D(z)}{\muz}.
		\end{equation*}
		We can cancel out $D(z)$ as it is non-zero analytic function and get the following equation
		\begin{equation}
			\od=\frac{b-a^2}{a}z\muz+\lambda z.\label{ch3eq4}
		\end{equation}
		From Lemma \ref{DistofY} we get that $\bY\sim \nu\left(\frac{\lambda a}{b-a^2},\frac{b-a^2}{a}\right)$.
		Plugging $\od$ from \eqref{ch3eq4} into \eqref{ch3Eq1} yields the following equation for $\muz$
		\begin{multline*}
			(b-a^2)z(z+1)\muz^2+\left[(b-a^2+a\lambda)z(z+1)-a(1+z)-a^2z\right]\muz+\\a\lambda z(z+1)+a(p-a)z=0.
		\end{multline*} 
		This is the equation  \eqref{DistofXeq1} from Lemma \ref{DistofX} with parameters
		$\alpha=\frac{a\lambda}{b-a^2}$, $\beta=\frac{a^2}{b-a^2}$, $\gamma=\frac{a}{b-a^2}$ and $\rho=\frac{a(p-a)}{b-a^2}$. 
		Since $\alpha, \beta,\gamma>0$ we conclude from Lemma \ref{DistofX} that 	$\bX\sim \mathcal{K}\left(\frac{a^2}{b-a^2},\frac{a\lambda}{b-a^2},\frac{a}{b-a^2}\right)$.
	\end{proof}
	\section{Appendix}
	\subsection{Free-Kummer distribution $\mathcal{K}(1,\beta,\gamma)$}
	We will explain now in more details the definition of  free-Kummer distribution for $\alpha=1$.
	In that case the system of equations \eqref{freeKummerab} takes the following form
	\begin{equation*}
		\left\{\begin{array}{lc}
			\gamma+\frac{\beta}{\sqrt{(a+1)(b+1)}}&=0,\\
			\gamma\frac{a+b}{2}+\beta-\frac{\beta}{\sqrt{(a+1)(b+1)}}&=2.
		\end{array}\right.
	\end{equation*}
	and can easily be solved explicitly. A solution exists when $\beta<0$ and assuming $a<b$ we have
	\begin{equation*}
		a=-1+\tfrac{1}{\gamma}(1-\sqrt{1-\beta})^2 \ \ \ \mbox{and}\ \ \ b=-1+\tfrac{1}{\gamma}(1+\sqrt{1-\beta})^2 .
	\end{equation*}
	Form this we easily see that $a>0$ if and only if $1-\beta> (1+\sqrt{\gamma})^2$. 
	Thus  we define $\mathcal{K}(1,\beta,\gamma)$ with  $1-\beta>(1+\sqrt{\gamma})^2$ to be a probability measure with the  density \eqref{KummerDensity} that takes now the form 	\begin{equation}
		\frac{\gamma}{2\pi}\frac{\sqrt{(x-a)(b-x)}}{x+1}\mathbbm{1}_{(a,b)}(x).
	\end{equation}
	It's easy to check that the above function is the density of $X-1$ where $X$ has the free-Poisson distribution $\nu(1-\beta,1/\gamma)$.
	
	When $1-\beta\leq (1+\sqrt{\gamma})^2$ we defined   $\mathcal{K}(1,\beta,\gamma)$
	to be a probability measure that has the density
	\begin{equation}
		f_{\beta,\gamma}(x)=\frac{1}{2\pi}\sqrt{x(b-x)}\left(\tfrac{\sigma}{x}-\tfrac{\beta}{(1+x)\sqrt{b+1}}\right)\mathbbm{1}_{(0,b)}(x),\label{kummerdensityApp}
	\end{equation}
	where  $\sigma=\gamma+\frac{\beta}{\sqrt{b+1}}$ and $b$ is the unique positive solution of \begin{equation}
		\gamma\tfrac{b}{2}+\beta-\tfrac{\beta}{\sqrt{(b+1)}}=2.\label{EqforbApp}
	\end{equation} 
	(Existence of such solution is easy to see. Uniqueness follows from the fact the RHS is a strictly increasing function of $b$ for $\beta\geq 0$ and strictly convex function of $b$ and equal $0$ at $b=0$ for $\beta<0$.) We will show now that \eqref{kummerdensityApp} is a density function if and only if $1-\beta\leq (1+\sqrt{\gamma})^2$.
	
	For $b>0$ we have
	$$\int_{0}^b\frac{\sqrt{x(b-x)}}{x}dx=\frac{\pi b}{2}\ \ \mbox{and}\ \ \ \int_{0}^b\frac{\sqrt{x(b-x)}}{x+1}dx=\frac{\pi (\sqrt{1+b}-1)^2}{2}$$
	and we can  check that $f_{\beta,\gamma}$ integrates to $1$ whenever $b$ is a solution of \eqref{EqforbApp}. To finish, we will show the following fact.
	\begin{fact}
		$f_{\beta,\gamma}$ is non-negative  if and only if $1-\beta\leq (1+\sqrt{\gamma})^2$.
	\end{fact}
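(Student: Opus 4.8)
The plan is to reduce the assertion to a one–variable sign computation, and then to an elementary comparison of the implicitly defined number $b=b(\beta,\gamma)$ with an explicit threshold, exploiting convexity of the equation \eqref{EqforbApp} that defines $b$.

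\emph{Step 1: collapse to $\sigma\ge 0$.} On $(0,b)$ one has $x(1+x)\sqrt{b+1}>0$, so
$$f_{\beta,\gamma}(x)=\frac{1}{2\pi}\,\frac{\sqrt{x(b-x)}}{x(1+x)\sqrt{b+1}}\,h(x),\qquad h(x):=\sigma\sqrt{b+1}\,(1+x)-\beta x,$$
and $h$ is affine in $x$. Since $\sqrt{x(b-x)}>0$ on $(0,b)$, non–negativity of $f_{\beta,\gamma}$ is equivalent to $h\ge 0$ on $(0,b)$, hence (affineness plus continuity) to $h(0)\ge 0$ and $h(b)\ge 0$. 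One computes $h(0)=\gamma\sqrt{b+1}+\beta=\sigma\sqrt{b+1}$ and $h(b)=\gamma(b+1)^{3/2}+\beta$, so $h(b)-h(0)=\gamma b\sqrt{b+1}>0$ because $b>0$; thus $h(0)\ge 0$ already forces $h(b)>0$, and
$$f_{\beta,\gamma}\ge 0\ \Longleftrightarrow\ \sigma\ge 0\ \Longleftrightarrow\ \gamma\sqrt{b+1}\ge -\beta .$$

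\emph{Step 2: two cases.} If $-\beta\le\gamma$ (in particular if $\beta\ge 0$), then $\gamma\sqrt{b+1}>\gamma\ge-\beta$ since $b>0$, so $f_{\beta,\gamma}\ge 0$; and $1-\beta\le 1+\gamma\le(1+\sqrt\gamma)^2$, so the right–hand condition holds as well, and both sides of the claimed equivalence are true. If instead $\beta<-\gamma$, then $-\beta/\gamma>1$ and squaring gives $\gamma\sqrt{b+1}\ge-\beta\iff b\ge b_0$ with $b_0:=\beta^2/\gamma^2-1>0$, so it remains to prove $b\ge b_0\iff 1-\beta\le(1+\sqrt\gamma)^2$. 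Here I would use that $\phi(b):=\gamma\frac b2+\beta-\frac{\beta}{\sqrt{b+1}}$ is strictly convex on $(-1,\infty)$ for $\beta<0$ (indeed $\phi''(b)=-\tfrac{3\beta}{4}(b+1)^{-5/2}>0$), with $\phi(0)=0<2$, $\phi(b)=2$, and $\phi\to+\infty$ at both ends of $(-1,\infty)$; hence $\{t\ge 0:\phi(t)\le 2\}=[0,b]$, and since $b_0>0$ this yields $b\ge b_0\iff\phi(b_0)\le 2$. Finally $b_0+1=\beta^2/\gamma^2$ and $\sqrt{b_0+1}=-\beta/\gamma$, so a one–line computation gives $\phi(b_0)=\frac{\beta^2}{2\gamma}+\frac\gamma2+\beta=\frac{(\beta+\gamma)^2}{2\gamma}$; therefore $\phi(b_0)\le 2\iff(\beta+\gamma)^2\le 4\gamma$, and as $\beta+\gamma<0$ this reads $-(\beta+\gamma)\le 2\sqrt\gamma$, i.e. $1-\beta\le 1+2\sqrt\gamma+\gamma=(1+\sqrt\gamma)^2$. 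Combining the two cases proves the Fact.

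The only genuine obstacle is the case $\beta<-\gamma$: there $b$ is only implicitly defined by \eqref{EqforbApp} and cannot be compared with $b_0$ directly. The key point is that strict convexity of $\phi$ turns the comparison $b\ge b_0$ into the single scalar inequality $\phi(b_0)\le 2$; after the clean identity $\phi(b_0)=(\beta+\gamma)^2/(2\gamma)$ — which is exactly what makes the threshold $(1+\sqrt\gamma)^2$ appear — the remainder is pure arithmetic.
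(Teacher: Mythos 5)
Your proof is correct and takes essentially the same route as the paper: both arguments reduce non\-negativity of $f_{\beta,\gamma}$ to the single condition $\sigma\ge 0$, dispose of the easy case $\beta\ge-\gamma$, and then settle the remaining case by evaluating the defining function at an explicit threshold point, where the same key quantity $\tfrac{(\beta+\gamma)^2}{2\gamma}$ is compared with $2$. The only difference is a change of parametrization: the paper substitutes $u=-\beta/\sqrt{b+1}$ and compares $u$ with $\gamma$ using uniqueness of the crossing of $h(u)=2$, whereas you stay in the variable $b$ and compare $b$ with $b_0=\beta^2/\gamma^2-1$ via strict convexity of $\phi$ --- under that substitution the two computations coincide.
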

	\begin{proof}

		Note that 
		$$\tfrac{\sigma}{x}-\tfrac{\beta}{(1+x)\sqrt{b+1}}=\tfrac{\sigma}{x}-\tfrac{\sigma-\gamma}{x+1}=\tfrac{\sigma+\gamma x}{x(x+1)}\geq 0\  \mbox{for}\ x\in(0,b)\iff \sigma\geq 0,$$
		so the goal is to show $\sigma\geq 0\iff \beta\geq -\gamma -2\sqrt{\gamma}$. It's obvious that $\sigma\geq 0$ when $\beta\geq 0$ or $\beta<0$ but $\beta+\gamma\geq 0$. So we can assume $\beta<-\gamma<0$.
		
		Let us denote $u=\gamma-\sigma=-\frac{\beta}{\sqrt{b+1}}$. Note that $u>0$ since we assumed $\beta<-\gamma<0$. The condition $b>0$ is equivalent to  $u=-\frac{\beta}{\sqrt{b+1}}<-\beta$. Since $b=\frac{\beta^2}{u^2}-1$ we conclude that $u$ is the unique solution of 
		\begin{equation}
			\tfrac{\gamma\beta^2}{2u^2}-\tfrac{\gamma}{2}+\beta+u=2,
		\end{equation}
		such that  $u\in(0,-\beta)$. If we denote $h(u)=\frac{\gamma\beta^2}{2u^2}-\frac{\gamma}{2}+\beta+u$ then $\lim\limits_{u\to 0^-}h(u)=+\infty$ and $h(-\beta)=0$. Let us consider two cases
		\begin{itemize}
			\item If $\beta\in[-\gamma-2\sqrt{\gamma},-\gamma)$ then $\gamma\in (0,-\beta)$ and $(\beta+\gamma)^2\leq 4\gamma$. Observe that 
			$$h(\gamma)=\tfrac{\beta^2}{2\gamma}-\tfrac{\gamma}{2}+\beta+\gamma=\tfrac{(\beta+\gamma)^2}{2\gamma}\leq 2.$$
			Since there is only one solution to $h(x)=2$ for $x\in(0,-\beta)$ we conclude that $u=\gamma-\sigma\leq \gamma$ which proves that $\sigma\geq 0$.
			\item Similarly if  $\beta<-\gamma-2\sqrt{\gamma}$ then still $\gamma\in(0,-\beta)$ but now  $(\beta+\gamma)^2>2\gamma$ so $h(\gamma)>2$ and hence $u>\gamma$ and  $\sigma<0$.
		\end{itemize}
		
	\end{proof}
	\begin{remark}
		If $\beta=-\gamma-2\sqrt{\gamma}$ then $h(\gamma)=2$ and therefore $\sigma=0$. For $\beta\in(-\gamma-2\sqrt{\gamma},-\gamma)$ we have $h(\gamma)<2$ and thus $\sigma>0$ (also trivially $\sigma>0$ for $\beta\geq-\gamma)$.
	\end{remark}	
	
	\subsection*{Acknowledgment} The author thanks  K. Szpojankowski for helpful comments and discussions.
	\newline

	\bibliographystyle{plain}
	\bibliography{MarcinSwieca}
\end{document}